\let\oldtocsection=\tocsection
\let\oldtocsubsection=\tocsubsection
\let\oldtocsubsubsection=\tocsubsubsection
\renewcommand{\tocsection}[2]{\hspace{0em}\oldtocsection{#1}{#2}}
\renewcommand{\tocsubsection}[2]{\hspace{1em}\oldtocsubsection{#1}{#2}}
\renewcommand{\tocsubsubsection}[2]{\hspace{2em}\oldtocsubsubsection{#1}{#2}}
\newtheorem{theorem}{Theorem}[section]
\newtheorem{lemma}[theorem]{Lemma}
\newtheorem{proposition}[theorem]{Proposition}
\newtheorem{corollary}[theorem]{Corollary}
\theoremstyle{definition}
\newtheorem{definition}[theorem]{Definition}
\newtheorem{construction}[theorem]{Construction}
\newtheorem{discussion}[theorem]{Discussion}
\newtheorem{example}[theorem]{Example}
\theoremstyle{remark}
\newtheorem{remark}[theorem]{Remark}
\numberwithin{equation}{section}
\numberwithin{figure}{section}
\newcommand{\NN} {\mathbb{N}}
\newcommand{\ZZ} {\mathbb{Z}}
\newcommand{\QQ} {\mathbb{Q}}
\newcommand{\RR} {\mathbb{R}}
\newcommand{\CC} {\mathbb{C}}
\newcommand{\PP} {\mathbb{P}}
\renewcommand{\AA} {\mathbb{A}}
\newcommand{\GG} {\mathbb{G}}
\newcommand {\ev} {\operatorname{ev}}
\newcommand {\gp} {{\operatorname{gp}}}
\newcommand {\Hom} {\operatorname{Hom}}
\newcommand {\id} {\operatorname{id}}
\newcommand {\im} {\operatorname{im}}
\newcommand {\Int} {\operatorname{int}}
\newcommand {\kk} {\Bbbk}
\newcommand {\lra} {\longrightarrow}
\newcommand {\M} {\mathcal{M}}
\newcommand {\maxid} {\mathfrak{m}}
\renewcommand{\O} {\mathcal{O}}
\renewcommand {\P} {\mathscr{P}}
\newcommand {\pr} {\operatorname{pr}}
\newcommand {\Spec} {\operatorname{Spec}}
\newcommand {\T} {\mathfrak{T}}
\newcommand{\bbfamily}{\fontencoding{U}\fontfamily{bbold}\selectfont}
\newcommand{\textbb}[1]{{\bbfamily#1}}
\newcommand {\lfor} {\mbox{\rm\textbb{[}}}
\newcommand {\rfor} {\mbox{\rm\textbb{]}}}
\newcommand {\trcone} {\overline{C}}
\newcommand {\ul} {\underline}
\newcommand {\ol} {\overline}
\DeclareMathOperator{\Map}{Map}
\newcommand\restr[2]{{% we make the whole thing an ordinary symbol
  \left.\kern-\nulldelimiterspace % automatically resize the bar with \right
  #1 % the function
  \vphantom{\big|} % pretend it's a little taller at normal size
  \right|_{#2} % this is the delimiter
  }}
\newcommand {\Trop}  {\operatorname{Trop}}
\newcommand{\xMapsto}[2][]{\ext@arrow 0599{\Mapstofill@}{#1}{#2}}
\def\Mapstofill@{\arrowfill@{\Mapstochar\Relbar}\Relbar\Rightarrow}
\newtheoremstyle{cited}%
  {3pt}% (space above)
  {3pt}% (space below)
  {\itshape}% (body font)
  {}% (indent amount)
  {\bfseries}% {theorem head font}
  {.}% {punctuation after theorem head}
  {.5em}% {space after theorem head}
  {\thmname{#1} \thmnumber{#2} \thmnote{\normalfont#3}}% {theorem head spec}
\theoremstyle{cited}
\begin{document}

%===========================================================
\title[A Mirror for the Tate Curve]{Mirror Symmetry for the Tate Curve\\ via Tropical and Log Corals}

% author one information

\author{H\"ulya Arg\"uz}
\address{Universit\'e de Versailles Saint-Quentin-en-Yvelines, Paris--Saclay \\78000, France}
\email{harguz@ic.ac.uk}
\subjclass[2010]{14J33, 14T05, 14N10, 53D37}

\date{\today}
\maketitle

\begin{abstract}
We introduce tropical corals, balanced trees in a half-space, and show that they correspond to holomorphic polygons capturing the product rule in Lagrangian Floer theory for the elliptic curve. We then prove a correspondence theorem equating counts of tropical corals to punctured log Gromov--Witten invariants of the Tate curve. This implies that the homogeneous coordinate ring of the mirror to the Tate curve is isomorphic to the degree-zero part of symplectic homology, confirming a prediction of homological mirror symmetry.
\end{abstract}

\tableofcontents
%\bigskip

%===========================================================
%===========================================================

\section{Introduction.}

\subsection{Background and context}
Mirror symmetry, first observed by physicists studying string theory, posits that Calabi--Yau manifolds exist in pairs $(X,\Breve{X})$, where the symplectic geometry of $X$ controls the complex geometry of $\Breve{X}$ and vice-versa. This phenomenon has received a great deal of attention from mathematicians over the last thirty years, and has been given a compelling categorical formulation by Kontsevich~\cite{Ko} known as \emph{homological mirror symmetry}. Although still conjectural, homological mirror symmetry serves as an important organising principle in the field, and suggests answers to many fundamental questions. For example: given a Calabi--Yau manifold $X$, how can we construct the mirror manifold $\breve{X}$? 

Homological mirror symmetry implies that, in good circumstances, the homogeneous coordinate ring of the mirror $\breve{X}$ should be isomorphic to the degree-zero part of the \emph{symplectic cohomology} $SH^0(X)$ \cite{Se}. Proving this is challenging, because tools for computing symplectic cohomology -- which encodes part of the Floer theory of $X$ -- are still in their infancy. 
%As a consequence, symplectic cohomology, particularly the description of its product structure whih requires studying Floer theory, 
%requires counting solutions of the Floer equation which are hard to control. 
Indeed, so far symplectic cohomology has been computed to verify manifestations
of homological mirror symmetry only in a handful of cases, in the situation of two dimensional
Liouville domains 
%Indeed, so far symplectic cohomology has been computed only in a handful of cases, in the situation of two dimensional Liouville domains 
\cite{P}. In this paper, we investigate it in the first known case where $X$ is not a Liouville domain: the Tate curve, a family 
\[T\longrightarrow \Spec\CC\lfor u \rfor\]
with generic fiber a smooth elliptic curve and central fiber a cycle $T_0$ of $b$ rational curves, for a fixed integer $b$.

% Strominger--Yau--Zaslow~\cite{SYZ} have suggested that Mirror Symmetry has a geometric origin, conjecturing that a mirror pair $(X, \breve{X})$ of Calabi--Yau manifolds should carry dual special Lagrangian torus fibrations $X \to B$, $\breve{X} \to B$ over the same base~$B$. This is the SYZ Conjecture. Gross and Siebert have developed a program \cite{affinecomplex,GSCanScat} to provide new foundations for mirror symmetry and an algebro-geometric understanding of the SYZ Conjecture. A key ingredient in their approach is tropical geometry, which one should think of as occurring on the base space $B$ -- this is the ``tropicalization'' of $X$ (or $\breve{X}$).

Our work builds directly on the Gross--Siebert program \cite{affinecomplex,GSCanScat}, which aims at an algebro-geometric understanding of mirror symmetry.
%to provide new foundations for mirror symmetry 
%using log geometric tools. 
This program is motivated by the Strominger--Yau--Zaslow (SYZ) conjecture, suggesting that a mirror pair $(X, \breve{X})$ of Calabi--Yau manifolds should carry dual special Lagrangian torus fibrations $X \to B$, $\breve{X} \to B$ over the same base~$B$~\cite{SYZ}. 
%The tropical geometry occurring in the Gross--Siebert program should be thought of as taking place on the base space $B$ -- this is the ``tropicalization'' of $X$ (or $\breve{X}$).
A key ingredient in the Gross--Siebert approach is tropical geometry, which one should think of as occurring on the base $B$ of the fibration -- this is viewed as the ``tropicalization'' of a degenerate limit of $X$ (or $\breve{X}$).

From the point of view of the Gross--Siebert program, the homogeneous coordinate ring of the mirror to the open Calabi--Yau manifold $T \setminus T_0$ -- or rather, the mirror to the log Calabi--Yau pair $(T,T_0)$ -- arises as a \emph{ring of theta functions} \cite{affinecomplex,gross2021canonical}. Structure constants in this ring count maps from algebraic curves into $T$ that satisfy prescribed tangency conditions~\cite{gross2019intrinsic}: these are the punctured log Gromov--Witten invariants defined by Abramovich--Chen--Gross--Siebert~\cite{ACGS}. Our contribution is to establish the correct notion of tropicalization for such curves in this setting. We introduce tropical objects, called \emph{corals}, that morally speaking arise as the tropical limits of punctured log curves in $T$. We then prove a correspondence between punctured log Gromov--Witten invariants of $T$ and counts of corals (Theorem \ref{Main Theorem for the Tate curve} and Theorem \ref{PuncturedCorrespondence}). 

On the other hand, the discussion in \S\ref{Sec: SH} below expresses structure constants of the symplectic cohomology $SH^0(T \setminus T_0)$ in terms of counts of holomorphic polygons in $E$, and further in terms of counts of combinatorial objects called \emph{tropical Morse trees} introduced by Abouzaid--Gross--Siebert \cite[\S 8]{Clay}. By establishing a correspondence between tropical Morse trees and corals (Theorem \ref{SurjectionOfCoralsToTMT}) we verify that the product structure in the degree-zero part of symplectic cohomology $SH^0(T \setminus T_0)$ agrees with the one in the homogeneous coordinate ring of the mirror to $T \setminus T_0$, given by the ring of theta functions.

\subsection{Outline of the paper}
We now give a more detailed overview of the argument, following the outline in Figure~\ref{results}. 

\tikzstyle{decision} = [diamond, draw, fill=blue!20,
    text width=6.5em, text badly centered, node distance=3cm, inner sep=0pt]
\tikzstyle{block} = [rectangle, draw,
    text width=14.8em, text centered, rounded corners, minimum height=4em]
\tikzstyle{mycircle} = [circle, thick, draw=orange, minimum height=4mm]

\tikzstyle{line} = [draw, -latex']
\tikzstyle{cloud} = [draw, ellipse,fill=red!20, node distance=3cm,
    minimum height=2em]  
    
   \begin{figure}
\begin{tikzpicture}[align=center,node distance = 2cm, auto]
    % Place nodes
 
    \node [block] (ProductSH) {Product rule in $SH^0(T \setminus T_0)$}
    ;
    \node [block, right of=ProductSH, node distance=10.0cm] (ProductTheta) {Product rule in the ring of theta functions 
    $\hat{R}_{T \setminus T_0}$};
    
     \node [block, below of=ProductSH, node distance=3cm] (Hol) { \# Holomorphic polygons in $E$  };
      
       \node [block, below of=ProductTheta, node distance=3cm] (Punctured)
      {\# Punctured log maps in $T$ }      ; 
             
      \node [block, below of=Hol, node distance=3cm] (Morse)
         {\# Tropical Morse trees on $S^1$}    ; 
             
      \node [block, below of=Punctured, node distance=3cm] (LogCorals)
 {\# Log corals in $T_0 \times \AA^1$}             ; 
           
           \node [block, below right=1.5cm and -1.5 cm of Morse] (TropCorals)
 {\# Tropical corals in $\overline{C}S^1$}  ;
      
\draw[double,<->] (TropCorals) -- node [text width=2cm, midway, right=0.5em] { Thm \ref{Main Theorem for the Tate curve}  } (LogCorals);

\draw[double,<->] (ProductSH) -- node [text width=1.5cm, midway, left=0.5em] {\hspace{0.4cm} \S\ref{Sec: SH} } (Hol);

\draw[double,<->] (ProductTheta) --  node [text width=1cm, midway, right=0.5em] { \cite{gross2019intrinsic} } (Punctured); 

\draw[double,<->] (Hol) -- 
node [text width=1.5cm, midway, left=0.5em] {
\cite[\S 8]{Clay}}  (Morse); 
  
\draw[double,<->] (Punctured) --   node [text width=2cm, midway, right=0.5em] {Thm \ref{PuncturedCorrespondence}} (LogCorals); 

\draw[double,<->] (Morse) --   node [text width=2cm, midway, left=1em ] {Thm \ref{SurjectionOfCoralsToTMT}} (TropCorals); 

\draw[dashed,<->] (ProductSH) --  (ProductTheta);           
\end{tikzpicture} 
\caption{}
   \label{results} 
   \end{figure}

\subsubsection{The symplectic cohomology of $T \setminus T_0$}

By definition, symplectic cohomology is a version of Hamiltonian Floer theory for Liouville domains \cite{CFH,FH,GP,Se}. In \S \ref{Sec: SH} we investigate the symplectic cohomology of the open Calabi--Yau manifold $T \setminus T_0$. Topologically, $T \setminus T_0$ is the mapping cylinder of the Dehn twist $\tau:E\to E$ on the elliptic curve; it is not a Liouville domain. Nonetheless one can define (a version of) the symplectic cohomology ring $SH^\star(T\setminus T_0)$ -- see \S \ref{Sec: SH}, \cite{F} and \cite{McLean}. Based on discussions with Pomerleano--Tonkonog and Abouzaid--Siebert, we have that
\begin{equation}
\label{SHandFloer2}
SH^0(T\setminus T_0) \cong \bigoplus_{k \in \ZZ} HF^0 \big(L(0), L(k)\big)
\end{equation}
where $HF$ stands for Lagrangian Floer cohomology and $L(k)\subset E$ is the Lagrangian which lifts to a line of slope $k$ on the universal cover of $E$. Thus the ring structure on $SH^0(T\setminus T_0)$ is isomorphic to the ring structure on Lagrangian Floer cohomology of the elliptic curve $E$. In particular, the product rule in $SH^0(T\setminus T_0)$ can be described by counts of holomorphic polygons in $E$ bounded by Lagrangians.

\subsubsection{Holomorphic polygons in $E$ and tropical Morse trees}

Gross has established a correspondence between holomorphic polygons in $E$ and tropical Morse trees \cite[\S 8]{Clay}. This is discussed in detail in \S\ref{Sec: Tropical Morse trees}, but roughly speaking a tropical Morse tree is a map $\phi:\mathcal{R} \rightarrow S^1$, where $\mathcal{R}$ is a ribbon graph, whose edges are decorated with integer weights. The map $\phi$ is asked to satisfy a balancing condition in terms of these weights and lengths of the images of edges of $\mathcal{R}$. This enables us to construct a holomorphic polygon in $E$ from a tropical Morse tree. The correspondence between tropical Morse trees in $S^1$ and holomorphic polygons in $E$, together with \eqref{SHandFloer2}, allows us to describe the ring structure on $SH^0(T\setminus T_0)$ using counts of tropical Morse trees. 

From the SYZ point of view on mirror symmetry, the target $S^1$ of a tropical Morse tree $\phi:\mathcal{R} \rightarrow S^1$ should be thought of as the SYZ base $B$ for the elliptic curve $E$. Below we will also consider tropical Morse trees $\phi:\mathcal{R} \rightarrow \RR$ with target $\RR$, and these $\RR$ should be thought of as the SYZ base for the algebraic torus $\CC^\times$.

\subsubsection{Unfolding the Tate curve}

 Up to this point, we have considered the Tate curve $T \to \Spec \CC \lfor u \rfor$ with generic fiber a smooth elliptic curve and special fiber $T_0$ given by a cycle of rational curves. In what follows, it will be convenient to consider the \emph{unfolded Tate curve} $\widetilde{T} \to \Spec \CC[u]$ with general fiber an algebraic torus $\CC^\times$ and special fiber $\widetilde{T}_0$ given by an infinite chain of rational curves. We describe in \S\ref{Taking the quotient} how to pass from statements about the unfolded Tate curve $(\widetilde{T}, \widetilde{T_0})$ to the corresponding statements for the Tate curve $(T, T_0)$, by taking an appropriate quotient.

\subsubsection{Tropical corals}

The main new objects introduced in this paper are \emph{tropical corals}. These, unlike tropical Morse trees, are tropical objects for which one can obtain a geometric interpretation in terms of certain log maps, as explained below. Roughly, tropical corals are balanced trees contained in the truncated cone $\trcone \RR := \RR \times [1,\infty)$, and which satisfy some additional conditions where they meet the boundary of the truncated cone. A tropical coral is given by a map 
\[ h: \Gamma \longrightarrow \trcone \RR  \]
from an edge-weighted graph $\Gamma$ that satisfies a balancing condition at the images of the vertices of $\Gamma$. 
%They furthermore are required to satisfy some additional conditions where they meet the boundary of the truncated cone.
Tropical corals are similar to usual tropical curves in the plane ~\cite{M,NS}, except that the unbounded edges meeting the truncated cone at its boundary get cut off, and moreover we require these edges to lie on lines passing through the origin. Figure \ref{ExCoral} shows the image of a tropical coral in the truncated cone $\trcone \RR$; for more details on tropical corals see~\S\ref{subsec: tropical corals}.
\begin{figure}
\resizebox{.6\linewidth}{!}{
\input{HulyasCoral.pspdftex}}
	\caption{A tropical coral in the truncated cone $\trcone \RR$ over $\RR$}
\label{ExCoral}
\end{figure}

A tropical coral has a \emph{degree}, which is the set of vectors in $\ZZ^2$ defined by the (weighted) directions of unbounded edges together with the directions of the edges that meet the boundary of the truncated cone. We set up a well-defined counting problem for tropical corals in \S \ref{The count of tropical corals}, by considering tropical corals of a fixed degree $\Delta$ that satisfy a constraint~$\lambda$; here $\lambda$ specifies the position of all but one 
%upward-pointing
unbounded edge. We define the multiplicity of a tropical curve (Definition~\ref{def mult labelled}), and set $N^{\mathrm{trop}}_{\Delta,\lambda}$ to be the number of tropical corals of degree $\Delta$ that satisfy $\lambda$, counted with multiplicity.

\begin{theorem}[see Lemma~\ref{stable range non-empty} and Theorem~\ref{Independence Of Constraint} for details]

For a general constraint~$\lambda$, chosen from a non-empty set called the \emph{stable range}, the corresponding count of tropical corals $N^{\mathrm{trop}}_{\Delta,\lambda}$ is independent of~$\lambda$.
\end{theorem}

From the SYZ point of view on mirror symmetry, the target $\trcone \RR$ of a tropical coral $h: \Gamma \to \trcone \RR$ should be thought of as the SYZ base $B$ for the total space of the unfolded Tate curve $\widetilde{T}$. That is, corals are tropical curves in the tropicalization of a degenerate limit of $\widetilde{T}$ (see Theorem \ref{base change trcone} and Proposition \ref{prop_trop_Y})).

\subsubsection{Tropical corals and tropical Morse trees}

In \S \ref{From Tropical Morse Trees to Tropical Corals} we show that a tropical coral $h \colon \Gamma \to \trcone \RR$ without horizontal edges determines a tropical Morse tree $\phi \colon \mathcal{R} \to \RR$. Roughly speaking, the coral itself defines the ribbon graph $\mathcal{R}$, and the map $\phi$ is defined by taking an unbounded edge of $\Gamma$ to its direction in $\PP^1(\RR) = S^1$. Conversely a general (trivalent) tropical Morse tree can be lifted to a tropical coral -- see Theorem~\ref{SurjectionOfCoralsToTMT}. Thus, we obtain:
\begin{theorem}[see Theorem~\ref{SurjectionOfCoralsToTMT} and Corollary~\ref{thm: bijective correspondence corals and TMT} for details]
There is a surjection 
\begin{eqnarray}
\label{SurjectionTMT}
\Psi: \mathcal{T} & \lra &   \mathcal{TMT}\\
\nonumber
h & \longmapsto & \psi_{h} 
\end{eqnarray}
from the set $\mathcal{T}$ of tropical corals with no horizontal edges to the set of tropical Morse trees $\mathcal{TMT}$. The fiber over $\phi \in \mathcal{TMT}$ is the set of tropical corals of a fixed type determined by $\phi$. In particular, fixing appropriate constraints for the corals, we obtain a bijective correspondence between tropical Morse trees and tropical corals.
\end{theorem}
\noindent This allows us to describe the ring structure on $SH^0(T\setminus T_0)$ through counts of tropical corals $N^{\mathrm{trop}}_{\Delta,\lambda}$. 

\subsubsection{Tropical corals and log corals}

We now turn to the geometric curve-counting problem of which tropical corals give the tropicalization: this involves counting certain log curves. As a first step we introduce an intermediate space, which one can think of as a degeneration of the total space of the unfolded Tate curve. Consider the unfolded Tate curve as a family over $\Spec \CC [ u ]$, and take the base change $Y$ of this family along the map
\[ \Spec \CC [ s, t ] \to \Spec \CC [ u ] \]
given by $u = st$.  The general fiber of the degeneration given by the composition $Y \to \Spec \CC [ s, t ] \to \Spec \CC [ t ]$ is the total space $\widetilde{T}$ of the unfolded Tate curve. The degenerate limit of $\widetilde{T}$, given by the central fiber of $Y \to \Spec \CC [ t ] $ over $t=0$, is $\widetilde{T}_0 \times \AA^1$. We consider the total space of $Y$ as a log scheme, equipping it with the divisorial log structure with respect to the central fiber $\widetilde{T}_0 \times \AA^1$, and equipping the central fiber itself with the pullback log structure.

A \emph{log coral}, defined in \S\ref{A curve counting problem}, is a log map to $\widetilde{T}_0 \times \AA^1$ from a marked nodal genus-zero log curve $C$ that may have non-compact components. The non-compact components here are required to be \emph{parallel}, that is, they project dominantly to $\AA^1$ and project to a single smooth point in $\widetilde{T}_0$. In \S\ref{A curve counting problem} we set up a well-defined counting theory for log corals. We consider log corals of fixed degree $\Delta$, which satisfy an \emph{asymptotic constraint}~$\lambda$ and a \emph{log constraint}~$\rho$; here the asymptotic constraint is the same data as the tropical constraint $\lambda$ considered above, and the log constraint $\rho$, defined as in Definition~\ref{log constraint}, imposes additional incidence conditions at marked points. We write $N^{\log}_{\Delta,\lambda,\rho}$ for the number of log corals of degree $\Delta$ which satisfy the asymptotic constraint $\lambda$ and the log constraint $\rho$. The main result of \S \ref{log count equals tropical count} is the following correspondence theorem for (log) corals:
\begin{theorem}[see Theorem~\ref{main theorem} for details]
For general asymptotic constraint $\lambda$ in the stable range and any choice of log constraint $\rho$, we have
\[N^{\log}_{\Delta,\lambda,\rho}= N^{\mathrm{trop}}_{\Delta,\lambda}.   \]
In particular, the counts $N^{\log}_{\Delta,\lambda,\rho}$ are log Gromov--Witten invariants of the (unfolded) Tate curve, independent of $\lambda$ and $\rho$.
\end{theorem}

This is the technical heart of the paper. We argue as follows. We consider an open subset $U_0 \subset \widetilde{T}_0 \times \AA^1$ which contains the images of all log corals contributing to $N^{\log}_{\Delta,\lambda,\rho}$. Extending the truncated cone, we obtain a compact space $\overline{U}_0$ which contains $U_0$ and has an additional component $Z_0$ (which corresponds to the origin in the plane). Then, extending the edges of a tropical coral which meet the boundary of the truncated cone, so that they pass through the origin defines a tropical curve in the plane, and by Nishinou--Siebert~\cite{NS} this corresponds to a map from a (compact) curve $\tilde{C} \to \overline{U}_0$ that satisfies certain tangency conditions. 
%Here $\overline{U}_0$ is a compact space that contains $X_0 \times \AA^1$; it is obtained from the compactification $T_0 \times \PP^1$ by gluing on. 

Restricting this map $\tilde{C} \to \overline{U}_0$ to land in $\widetilde{T}_0 \times \AA^1$ defines a tropical coral $C \to \widetilde{T}_0 \times \AA^1$. The curve $\tilde{C}$ is obtained from $C$ by, for each non-compact component of $C$, compactifying it to $\PP^1$ and then gluing on an additional rational component which meets $\PP^1$ at the compactifying point at infinity and which maps to $Z_0$. The map from the additional rational components to $Z_0$ is standard, and is determined by the log constraint $\rho$. Thus our construction gives a bijective correspondence between log corals and tropical corals.

There is an important technical point here, which is discussed further in \S \ref{Reduction to the torically transverse case}. Nishinou--Siebert establish their correspondence theorem under a restrictive hypothesis, called toric transversality, that fails here. Lemma~\ref{Interpolating family} shows that their theorem in fact holds under weaker hypotheses, which include the case of the (unfolded) Tate curve. This is likely to have broader application.

\subsubsection{Log corals and punctured log maps}

As discussed in \S\ref{The punctured invariants of the central fiber of the Tate curve}, deleting all non-compact components of the source curve gives a one-to-one correspondence\footnote{By \S\ref{Taking the quotient}, we can consider either the Tate curve $(T,T_0)$ or the unfolded Tate curve $(\widetilde{T},\widetilde{T_0})$ here.} between log corals $C \to T_0 \times \AA^1$ and punctured log maps $C' \to T$, as defined in \cite{ACGS}. The image of such a punctured log map is necessarily contained in the central fiber $T_0 \subset T$. Since punctured log Gromov--Witten theory is unobstructed in this case. This implies:
\begin{theorem}[see Theorem~\ref{PuncturedCorrespondence} for details]
The count of log corals $N^{log}_{\Delta,\lambda,\rho}$ is a punctured log Gromov--Witten invariant of $T$. 
\end{theorem}
%this implies that the counts of log corals $N^{\log}_{\Delta,\lambda,\rho}$, and thus counts of tropical corals $N^{\mathrm{trop}}_{\Delta,\lambda}$, are punctured log Gromov--Witten invariants. 
These punctured log invariants give the structure constants of the ring of theta functions, that is, of the homogeneous coordinate ring of the mirror to the Tate curve $T$ \cite{gross2019intrinsic}. This completes the connection between the mirror to the Tate curve and the degree-zero symplectic homology $SH^0(T \setminus T_0)$.

\subsection{Related work and future aspects}

The correspondences in Figure~\ref{results} are expected to generalize to higher dimensional Calabi–Yau varieties. Nonetheless a higher dimensional tropical Morse category, which would be needed for this generalisation, has not been worked out so far.
Similarly we lack a suitable log deformation theory for log corals, generalising \cite[\S 7]{NS}, that would allow us to write concretely deformations of log corals 
%to deform to the 
as solutions of the Floer equation.
%capturing the product in symplectic cohomology. 
These solutions are punctured Riemann surfaces with asymptotic cylindrical ends, which can be visualized as thickenings of tropical corals on $\trcone S^1$. 
%To understand the $L_{\infty}$ structure in symplectic cohomology, it suffices to consider holomorphic pair-of-pants, which would correspond to tropical corals with two unbounded (positive) edges and a single negative edge, that is, an edge intersecting the boundary of the truncated cone $\trcone S^1$. However, in this article we study correspondences between tropical corals with several positive and negative edges, and punctured log maps. On the symplectic side, the analogy of such log maps which would correspond to 
Such punctured Riemann surfaces with several asymptotic cylindrical ends arise in the context of involutive Lie bialgebras, and capture an $IBL_{\infty}$-structure \cite{CL,CFL,MS,CMW} on symplectic cohomology. In general one should see algebro-geometric analogues of this and similar structures arising in the ring of theta functions.

\bigskip

\begin{flushleft}
\textbf{Acknowledgements.}
\end{flushleft}
This paper is based on my PhD thesis, which would not be possible without the support of my advisor Bernd Siebert. I also thank Dan Abramovich, Mohammed Abouzaid, Mark Gross, Tom Coates and Dimitri Zvonkine for many useful conversations. Finally, I thank the anonymous referees for their many insightful comments and valuable
suggestions which have resulted in major improvements to this article. This project has
received funding from the European Research Council (ERC) under the European Union's Horizon 2020 research and innovation programme (grant agreement No. 682603), and from Fondation Math\'ematique Jacques Hadamard.
%In addition, I thank Kai Cieliebak, Tom Coates, Dan Pomerleano, Oliver Fabert, Janko Latschev, Mark McLean, Michael Hutchings, Paul
%Seidel and Dmitry Tonkonog for a number of insightful conversations.
\vspace{1 cm}
\begin{flushleft}
\textbf{Conventions.}
\end{flushleft} We fix 
\[N=\ZZ^n, \,\ N_\RR=N\otimes_\ZZ \RR, \,\ M=\Hom(N,\ZZ),\,\ M_\RR=M\otimes_\ZZ \RR\] 
If $\Sigma$ is a fan in $N_\RR$ then $X(\Sigma)$ denotes the associated toric $\CC$-variety with big torus $\mathrm{Int} X(\Sigma)\simeq \GG(N)\subset X(\Sigma)$, whose complement is referred to as the \emph{toric boundary} of
$X(\Sigma)$. For a subset $\Xi\subset N_\RR$, $L(\Xi) \subset N_\RR$ denotes the \emph{linear space associated to $\Xi$}, which is the linear subspace spanned by differences $v-w$ for $v,w$ in $\Xi$. Given a monoid $P$, define the monoid ring 
\[\CC[P]:=\bigoplus_{p\in P}\CC z^p  \]
where multiplication is determined by $z^p\cdot z^{p'}=z^{p+p'}$. 
\section{The Tate Curve $T$}
\label{The Tate curve}
The Tate curve $T$ is a curve over the complete discrete valuation ring $R:= \CC \lfor u \rfor$, where $\CC\lfor u \rfor$ is the ring of formal power series in $u$ with coefficients in $\CC$. Topologically it is a family 
\[T\to \Spec\CC\lfor u \rfor\] 
of elliptic curves degenerating into a nodal elliptic curve. The construction of the Tate curve is a special case of a construction of Mumford for degenerations of abelian varieties \cite{Mu}. For details of the construction see also \cite[$8.4$]{Clay}. We describe the Tate curve torically, obtained as a quotient of its unfolding in the next section.
\subsection{The Tate curve as a $\ZZ$-quotient of its unfolding $\widetilde{T}$}
\label{The Tate curve and its unfolding}
%When working in the analytic category we consider the Tate curve over the unit disc in $\CC$.
In this section, we construct the \emph{unfolded Tate curve} and its degeneration, as a particular case of a \emph{toric degeneration} of a toric variety \cite{invitation, NS}. The Tate curve will then be obtained as a quotient of the unfolded Tate curve by an action of $\ZZ$. The initial data to construct the unfolded Tate curve, consists of the pair $(\RR,\P_b)$ where $\RR$ is viewed as an integral affine manifold \cite[Defn $2.2$]{AffLog}, which we will denote by $B$ and $\P_b$, $b \in \ZZ_{> 0}$ is a \emph{b-periodic polyhedral decomposition} of $\RR$ defined as follows.
\begin{definition}
\label{poly decomp}
Let 
\[\Xi_j :=[jb,(j+1)b],~ j\in \ZZ\] 
be a closed interval of integral length $b$ in $\RR$. Let
\[ \Xi'_j:=\Xi_j\cap\Xi_{j+1} \]
be a common face of two such intervals.
The \emph{integral b-periodic polyhedral decomposition $\P_b$ of $\RR$} is the covering \[ \P_b:=\{ \Xi_j\} \cup \{\Xi'_j \} \] 
of $\RR$, where $j\in \ZZ$. We refer to each $\Xi_j$ and $\Xi'_j$ as a \emph{cell} of $\P_b$. The \emph{maximal cells} of $\P_b$ are the faces $\Xi_j$, $j\in \ZZ$, and the \emph{vertices} of $\P_b$ are the $0$-faces $\Xi'_j$, $j\in \ZZ$.
\end{definition}
For each cell $\Xi_j\in \P_b$ we define the convex polyhedral cone 
\[
C(\Xi_j) = \RR_{\geq 0}\cdot (\Xi_j\times \{ 1\}).
\]
We use the cone $C(\Xi_j)$ to define the toric fan 
\begin{equation}
\label{FanForX}
\widetilde{\Sigma}_{\P_b}:= \{\text{$\sigma$ a face of $C(\Xi_j)$}~|~\Xi_j\in \P_b \}
\end{equation}
associated to  $\P_b$. The fan $\widetilde{\Sigma}_{\P_b}$ has support in $(\RR\times \RR_{>0}) \cup \{ (0,0) \}$. The projection $\RR^2\to\RR^2/\RR=\RR$ onto
the second factor defines a map of fans $\widetilde{\Sigma}_{\P_b} \to \{0,\RR_{\ge0}\}$ which induces the morphism
\begin{equation}
\label{Eq:unfolded Tate curve}
 \pi:\widetilde{T}\lra \AA^1
\end{equation} 
referred to as the \emph{unfolded Tate curve}. We illustrate the map of fans in Figure \ref{Tate fan}.
\begin{figure}
\label{Fig:fan for unfolded tate curve}
\input{MumfordFan.pspdftex}
	\caption{The toric fan for the unfolded Tate curve}
\label{Tate fan}
\end{figure}
%Note that the $b$-periodic polyhedral decomposition $\P_b$ of $\RR$ is obtained by rescaling the $1$-periodic polyhedral decomposition $\P_1$ of $\RR$ by $b$. The unfolded Tate curve constructed from initial data $(\P_b,\RR)$ is obtained from the unfolded Tate curve constructed from $(\P_1,\RR)$ by the base change $u \mapsto u^b$.
We can find the general and central fibers of  $\pi:\widetilde{T}\to \AA^1$ by examining the pre-images of the cones $\{0\}$ and $\RR_{\geq 0}$ in the fan for $\AA^1$ under the height function (see Lemma $3.4$ and Proposition $3.5$ in \cite{NS} for details). Since each $\Xi_j\in\P_b$ is bounded, we obtain the \emph{asymptotic fan}  $\Sigma_{\P_b}$ as the fan consisting of the single point $(0,0)\in \RR^2$ \cite[Defn. $3.1$]{NS}. Therefore, the degeneration $\pi:\widetilde{T}\to \AA^1$ associated to $(\RR,\P_b)$ satisfies 
\begin{equation}
\label{degeneration1}
\pi^{-1}(\AA^1\setminus \{0\}) = \GG_m \times (\AA^1\setminus \{0\})
\end{equation}
where $\GG_m$ is the algebraic torus and hence the general fiber of $\pi:\widetilde{T}\to\AA^1$ is $\widetilde{T}_t=\GG_m$. For details of the construction of the general fiber we refer to Lemma $3.4$ in \cite{NS}.

The central fiber $\widetilde{T}_0$ of $\pi:\widetilde{T}\to \AA^1$ is determined as follows. For each vertex $v\in \P_b$ let
\[\Sigma_v=\{ \RR_{\geq 0}\cdot(\Xi-v)\subset \RR~|~\Xi\in \P, v \in \Xi  \}\]
So, $\Sigma_v$ is the fan of the toric variety $\widetilde{T}_v=\PP^1$.
Similarly, each closed interval $\Xi\in \P_b$ defines a fan $\Sigma_\Xi$ for the toric variety $\widetilde{T}_{\Xi}$ which is a point of intersection of $\widetilde{T}_v$ and $\widetilde{T}_{v'}$ where $v$ and $v'$ denote the vertices adjacent to $\Xi$. Hence, we obtain $\widetilde{T}_0$ as an infinite chain of projective lines $\mathbb{P}^1$, glued pairwise together along an $A_{b-1}$ singularity in $\widetilde{T}$.

Before proceeding with the definition of the Tate curve, we would like to describe an affine cover for the total space of the unfolded Tate curve. For this, fix a $b$-periodic polyhedral decomposition $\P_b$ of $\RR$ and let $\Xi:=[a,a+b]\subset \RR $ be a maximal cell of $\P_b$, so that
\begin{equation}
\label{C(Xi)}
C(\Xi) =C((a,1),(a+b,1)) \subset N_{\RR} 
\end{equation} 
where we describe the cone $C(\Xi)$ over $\Xi$ in terms of the integral ray generators  $(a,1)$ and $(a+b,1)$ of its edges. We will use an analogous notation throughout this section.
\begin{equation} \label{C(Xi)vee} 
C(\Xi)^{\vee}=C( (1,-a),(-1,a+b) ) \subset M_{\RR} 
\end{equation}
The generators of the monoid ring $\CC[C(\Xi)^{\vee}\cap M]$ associated to $C(\Xi)^{\vee}$ are 
\[\{ z^{(1,-a)},z^{(-1,a+b)}, z^{(0,1)} \}\] 
The isomorphism
\begin{eqnarray}\label{xyu}
\varphi: \CC[C(\Xi)^{\vee}\cap M] & \lra &\CC[x,y,u]/(xy-u^b)
\nonumber \\
z^{(1,-a)}& \longmapsto & x
\nonumber\\
z^{(-1,a+b)} & \longmapsto & y
\nonumber\\
z^{(0,1)} & \longmapsto & u
\nonumber
\end{eqnarray}
gives us an affine cover for the total space of the unfolded Tate curve, given by a countable number of copies of
\begin{equation}
\label{ub}
 \Spec \CC[x,y,u]/(xy-u^b)  
\end{equation}
Now to define the Tate curve, first define
\[ \hat{\widetilde{T}}=
\lim_{\longrightarrow} \widetilde{T}^k = 
( \widetilde{T}_0, \lim_{\longleftarrow} \mathcal{O}_{\widetilde{T}} / u^{k+1})  
\]
where $\widetilde{T}^k$ is the $k$-th order thickening of $\pi^{-1}(0)$, that is, the subscheme of the unfolded Tate curve $\pi:\widetilde{T}\to \Spec \CC [u]$ defined by the equation $u^{k+1}=0$. There is  an action of $\ZZ$ on $\hat{\widetilde{T}}$, induced by the action of $\ZZ$ on the fan $\widetilde{\Sigma}_{\P_b}$, given by translation by $b$, taking a ray $\RR_{\geq 0}(i, 1)$  to $\RR_{\geq 0}(i+d, 1)$. This has the effect of acting on the big torus orbit $(\CC^*)^2 \subseteq \widetilde{T}$, with coordinates $z,u$ by
\[ (z,u) \mapsto  (zu^b,u).  \]
Hence if we fix a non-zero value of $u$, the action on the $\CC^*$ parametrized by $z$ is $z \mapsto z \cdot u^b$. As explained in \cite[\S 8.4.1]{Clay}, the quotient $\hat{\widetilde{T}}/\ZZ$ makes sense as a formal scheme. There is a map of formal schemes $\hat{\widetilde{T}}/\ZZ \to \hat{\AA}$ induced by $\pi$. Here $\hat{\AA}=\mathrm{Spf}\CC \lfor u \rfor$. Since, there is an ample line  bundle $\mathcal{L}$ over $\hat{\widetilde{T}}/\ZZ$ \cite[pg. $620$]{Clay}, Grothendieck's Existence Theorem \cite[EGA III, $5.4.5$]{Gr} ensures that $\hat{\widetilde{T}}/\ZZ$ arises as the formal completion of a scheme 
\begin{equation}
\label{Eq: The tate curve}
T\to \Spec\mathbb{C} \lfor u \rfor
\end{equation}  
which we refer to as the \emph{Tate curve}. Note that the generic fibre of the Tate curve, is an elliptic curve over $\mathbb{C}(\!(u)\!)$ and the central fiber $T_0$ is a nodal elliptic curve with a $A_{b-1}$ singularity, whose resolution (obtained by adding all rays connecting the origin to all integral points in $B=\RR$ in Figure \ref{Tate fan}) is a cycle of $b$ rational curves -- see\cite[\S $8.4.1$]{Clay}. In \S \ref{Taking the quotient}, we will see that the invariants that we compute on the Tate curve lift uniquely to the unfolded Tate curve. Therefore, for computational convenience we will disregard the $\ZZ$-quotient in the next sections.
\begin{remark}
\label{Rem: analytic Tate curve}
When working in the analytic category, since the $\ZZ$-action on the unfolded Tate curve $\pi:\widetilde{T}\to \AA^1$ is properly discontinuous in the analytic topology once restricting to the unit disc 
\[D=\{u\in\mathbb{C}\,|\, |u|<1\},\] 
we can define 
the \emph{analytic Tate curve}
\begin{equation}
\label{Eq: the analytic Tate curve}
T^\mathrm{an}\lra D
\end{equation} 
with fiber $T^\mathrm{an}_u$ over $u\in
D\setminus\{0\}$ the elliptic curve 
\[E:=\mathbb{C}^*/(z\sim u^b\cdot
z),\] 
viewed as a complex manifold. The central fiber, $T_0^\mathrm{an}$ again a cycle of $b$ rational curves. Note that the complement of the analytic Tate curve $T^\mathrm{an} \setminus T_0^\mathrm{an}$ is the mapping cylinder $\Map(\tau)$ of the Dehn twist $\tau \colon E \to E$ of an elliptic curve along a meridian. We use this description while investigating the symplectic cohomology of $\Map(\tau)$ in \S \ref{Sec: SH}
\end{remark}

\section{A Toric Degeneration of $\widetilde{T}$ Obtained From The Truncated Cone $\trcone \RR$}
\label{The unfolded Tate curve}
In this section we construct a toric degeneration $\tilde{\pi}: Y\to \AA^2$ of the unfolded Tate curve $\widetilde{T}$ defined in \S\ref{The Tate curve and its unfolding}. This construction is similar to the construction of the unfolded Tate curve from $(B,\P_b)$. However, rather than $B$, we obtain the degeneration of the total space by from the \emph{truncated cone} $\trcone B$ over $B$, defined as in \cite[Defn $3.14$]{Theta} as an integral affine manifold with boundary. We first review the construction of the truncated cone, and describe the polyhedral decomposition on it, induced from the one on $B$. For details we refer to \cite{Theta}. 

Let $B$ be an integral affine manifold, endowed with a polyhedral decomposition $\P$. For a cell $\Xi\in \P$, let $C(\Xi)$ be the closure of the cone spanned by $\Xi \times
\{1\}$ in $N_\RR\times\RR$:
\begin{equation}
\label{cone over Xi}
C(\Xi) :=  \overline{ \big\{a \cdot (n, 1)\,\big|\, a \ge 0, n\in
\Xi\big\} }
\end{equation}
Define the cone $C(B)$ over $B$ as the polyhedral decomposition
\[ C(B) = \bigcup_{\Xi\in\P} C(\Xi)\]
with cells $C(\Xi)$ for $\Xi \in \P$. Note that $C(B)\subseteq N_{\RR}\times \RR$ admits an integral affine structure with a singularity at the origin in $N_{\RR}\times \RR$ \cite[Construction $4.11$]{GHS}. 
\begin{definition}
\label{truncated cone}
The \emph{truncated cone} $\trcone B$ over $B\subset N_{\RR}$ is the manifold with boundary with underlying topological space
\[\trcone B := \{ (x, h) \in C(B)~|~ h \geq 1 \} \]
in the cone $C(B)$, endowed with the induced affine structure. It admits a polyhedral decomposition  $\trcone \P$ with cells 
\[\trcone \Xi := \{ (x, h) \in C(\Xi)~|~\text{$h \geq 1$, $\Xi\in \P$}\} \] 
The maximal cells of $\trcone \P$ are the cells $\trcone \Xi$ such that $\Xi$ is a maximal cell of $\P$.
\end{definition}
In what follows, focusing attention to the case $B=\RR$, we construct the degeneration of the unfolded Tate curve from the pair $(\trcone \RR,\trcone \P_b)$. Here, $\trcone \RR$ is the truncated cone over $\RR$ endowed with the polyhedral decomposition $\trcone \P_b$ with maximal cells $\trcone \Xi$, where $\P_b$ is the $b$-periodic polyhedral decomposition of $\RR$. We identify $\RR$, with the line given by $y=-x+1, z=1$ in $\RR^3$, and let $C(\trcone \Xi)$ be the cone over the truncated cone $\trcone \Xi$. Define the toric fan
\begin{equation}
\label{FanForY}
\widetilde{\Sigma}_{\trcone \P_b}:=\{\text{$\sigma$ a face of $C(\trcone \Xi)$}~|~\trcone \Xi \in \trcone \P_b   \} 
\nonumber
\end{equation}
and let $Y$ be the toric variety associated to $\widetilde{\Sigma}_{\trcone \P_b}$. The projection map
\begin{eqnarray}
(\pr_2,\pr_3):\RR\times\RR\times\RR &\lra & \RR\times\RR
\nonumber \\
(x,y,z) &\longmapsto & (y,z)
\nonumber 
\end{eqnarray}
onto the second and third factors defines a map of fans
\begin{eqnarray}
(\pr_2,\pr_3):\widetilde{\Sigma}_{\trcone \P_b} &\lra & \{ 0, \RR_{\geq 0 } \} \times  \{ 0, \RR_{\geq 0 }\}
\nonumber \\
\end{eqnarray}
which induces a morphism
\[\tilde{\pi}:Y\rightarrow \Spec \CC [s,t] \]
referred to as the \emph{degeneration of the unfolded Tate curve}. As in \textsection \ref{The Tate curve and its unfolding} before, $Y$ viewed over 
$\Spec \CC[t]$ defines a toric degeneration, this time of the Tate curve $T$ into
\[  Y_0:= \AA^1_s \times \widetilde{T}_0 \]
where $Y_0$ is the central fiber of $Y\rightarrow \Spec \CC [ s,t ]\rightarrow \Spec \CC[t]$ over $t=0$. In a moment we will define an affine cover for $Y$. 

Let $\Xi:=[a,a+b]$ be a maximal cell of $\P_b$. Then, the set of ray generators the cone over the truncated cone over $\Xi$ is given by $\{ (a,1,1), (a+b,1,1), (a+b,1,0), (a,1,0) \}$ as illustrated in Figure \ref{Fig:CCXi};
\begin{figure}
\scalebox{.4}{\input{CCXi.pspdftex}}
	\caption{The ray generators (in red) of the cone over the truncated cone over a maximal cell $\Xi$ identified with $[(a,1,0),(a+b,1,0)]$}
\label{Fig:CCXi}
\end{figure}
\begin{equation}
\,\ \,\ \,\ C(\trcone (\Xi))=C((a,1,1), (a+b,1,1), (a+b,1,0), (a,1,0)).
\nonumber
\end{equation}
Its dual ${C(\trcone (\Xi))}^{\vee}$ is given by
\begin{equation} \label{C}
\,\ \,\ \,\ {C(\trcone (\Xi))}^{\vee} = C( (0,1,-1), (-1,a+b,0), (0,0,1), (1,-a,0) )
\end{equation} 
The isomorphism 
\begin{eqnarray}\label{xyst}
\widetilde{\varphi}:\CC[C(\trcone \Xi)^{\vee}\cap (M\oplus \ZZ)] & \lra & \CC[x,y,s,t]/(xy-(st)^b) 
\nonumber \\
z^{(1,-a,0)}& \longmapsto & x
\nonumber\\
z^{(-1,a+b,0)} &\longmapsto & y
\nonumber\\
z^{(0,1,-1)} &\longmapsto & s
\nonumber\\
z^{(0,0,1)} &\longmapsto & t
\nonumber
\end{eqnarray}
defines an affine cover for the total space $Y$ of the degeneration of the unfolded Tate curve, given by a countable number of copies of
\begin{equation}
\label{stb}
 \Spec \CC[x,y,s,t]/(xy-(st)^b)  
\end{equation}
\begin{theorem}
\label{base change trcone}
The degeneration $\tilde{\pi}:Y\to \Spec\mathbb{C}[s,t]$ of the unfolded Tate curve is obtained from the unfolded Tate curve $\pi: \widetilde{T}\to \Spec\CC[u]$ by the base change  $u \mapsto st$.
\begin{proof}
Let $\Xi:=[a,a+b] \subset \RR$ be a maximal cell of $\P_b$ and let $\trcone \P_b$ be the corresponding maximal cell in $\trcone \Xi$. Then, the projection map
\begin{eqnarray}
(\pr_1,\pr_2): N_{\RR} \times \RR \times \RR & \lra & N_{\RR} \times \RR
\nonumber \\
C(\trcone \Xi) & \longmapsto & C(\Xi)
\nonumber
\end{eqnarray} 
defines a map of fans from the fan $\widetilde{\Sigma}_{{\P}_b}$ defined in \eqref{FanForX} and  $\widetilde{\Sigma}_{{\trcone \P}_b}$ defined in \eqref{FanForY}. Hence, the compatibility of the gluing of affine patches follows. The dual of $(\pr_1,\pr_2)$ induces the embedding 
\begin{align*}
j:C(\Xi)^{\vee} &\hookrightarrow  C(\trcone \Xi)^{\vee}
\nonumber\\
(m_1,m_2)&\mapsto (m_1,m_2,0)
\nonumber
\end{align*}
From equations \eqref{C(Xi)vee} and \eqref{C}, it follows that
\[C(\trcone \Xi)^{\vee} = j({C(\Xi)}^{\vee})+\RR_{\geq 0}(0,1,-1)+\RR_{\geq 0}(0,0,1)    \]
Let 
\[\phi_j:\CC[{C(\Xi)}^{\vee} \cap M] \to \CC[{C(\trcone \Xi)}^{\vee} \cap M \oplus \ZZ] \] 
be the map induced by $j: {C(\Xi)}^{\vee} \hookrightarrow   {C(\trcone \Xi)}^{\vee}$ on the level of monoid rings. Explicitly, we have
\begin{eqnarray}
\phi_j:\CC[{C(\Xi)}^{\vee} \cap M] & \lra & \CC[{C(\trcone \Xi)}^{\vee} \cap M \oplus \ZZ]
\nonumber \\
x:=z^{(1,-a)} &\longmapsto & z^{(1,-a,0)} =x 
\nonumber \\
 y:=z^{(-1,a+b)} &\longmapsto & z^{(-1,a+b,0)} =y 
\nonumber \\
u:=z^{(0,1)} &\longmapsto & z^{(0,1,0)} = z^{(0,1,-1)}\cdot z^{(0,0,1)}=st 
\nonumber
\end{eqnarray}
Hence, we get 
\begin{eqnarray}
\widetilde{\varphi}\circ \phi_j \circ \varphi: \CC[x,y,u]/(xy-u^b) & \longrightarrow &  \CC[x,y,s,t]/(xy-(st)^b) 
\nonumber \\
 x &\longmapsto & x 
\nonumber \\
 y &\longmapsto & y 
\nonumber \\
u &\longmapsto & st 
\nonumber
\end{eqnarray}
where $\varphi$ is the isomorphism defined in \eqref{xyu} and $\widetilde{\varphi}$ is the isomorphism defined in \eqref{xyst}. Thus, $\tilde{\pi}:Y\to \Spec\mathbb{C}[s,t]$ is obtained from $\pi: \widetilde{T}\to \Spec\CC[u]$ by the base change  $u \mapsto st$.
\end{proof}
\end{theorem}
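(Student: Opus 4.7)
The plan is to reduce the claim to a computation on affine toric charts and check that these local base-change identifications glue consistently. Since both $\tilde{\pi}:Y\to\Spec\CC[s,t]$ and $\pi:X\to\Spec\CC[u]$ are toric morphisms induced by maps of fans, the strategy is to realise the base change $u\mapsto st$ geometrically as a map of fans $\widetilde{\Sigma}_{\trcone\P_b}\to\widetilde{\Sigma}_{\P_b}$ compatible with the maps down to the corresponding one-dimensional toric fans.

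First I would fix a maximal cell $\Xi=[a,a+b]$ of $\P_b$ and the associated maximal cell $\trcone\Xi$ of $\trcone\P_b$. The cone $C(\trcone\Xi)\subset N_\RR\times\RR\times\RR$ is the four-dimensional cone described in \eqref{C}, while $C(\Xi)\subset N_\RR\times\RR$ is the two-dimensional cone of \eqref{C(Xi)}. The projection onto the first two coordinates $(\pr_1,\pr_2):N_\RR\times\RR\times\RR\to N_\RR\times\RR$ takes $C(\trcone\Xi)$ onto $C(\Xi)$ and, as $\Xi$ varies over $\P_b$, assembles into a map of fans $\widetilde{\Sigma}_{\trcone\P_b}\to\widetilde{\Sigma}_{\P_b}$; this yields a toric morphism $Y\to X$ over the base. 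Gluing between affine patches on $Y$ and on $X$ is automatic from the fact that $(\pr_1,\pr_2)$ is compatible with faces.

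Next I would dualise and work out the effect on coordinate rings. The embedding $j:C(\Xi)^\vee\hookrightarrow C(\trcone\Xi)^\vee$, $(m_1,m_2)\mapsto(m_1,m_2,0)$, induces a monoid-ring homomorphism $\phi_j$. Unravelling via the isomorphisms $\varphi$ of \eqref{xyu} and $\widetilde{\varphi}$ of \eqref{xyst}, the generators $z^{(1,-a)}$ and $z^{(-1,a+b)}$ map to $x$ and $y$ respectively, while the key relation is that
\[
z^{(0,1)}\longmapsto z^{(0,1,0)}=z^{(0,1,-1)}\cdot z^{(0,0,1)},
\]
which translates to $u\mapsto st$. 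This is exactly the base change $\CC[u]\to\CC[s,t]$, $u\mapsto st$, applied to the affine patch $\Spec\CC[x,y,u]/(xy-u^b)$, giving $\Spec\CC[x,y,s,t]/(xy-(st)^b)$, in agreement with \eqref{stb}.

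The hard part is essentially notational: one has to check that the projection $(\pr_1,\pr_2)$ really does define a map of fans (not just a set-theoretic map of cones) and that the resulting local identifications glue on overlaps $\trcone\Xi\cap\trcone\Xi'$. Both statements are immediate from the definitions: the projection sends rays to rays and faces to faces, and overlaps of maximal cells correspond to codimension-one faces where the monoid ring descriptions patch via the inclusion of dual cones. Once these fan-theoretic checks are in place, assembling the local isomorphisms $\widetilde{\varphi}\circ\phi_j\circ\varphi$ over all maximal cells $\Xi\in\P_b$ produces a global isomorphism $Y\simeq X\times_{\Spec\CC[u]}\Spec\CC[s,t]$ covering $u\mapsto st$, which is the desired conclusion.
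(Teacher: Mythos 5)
Your proposal is correct and follows essentially the same route as the paper: the projection $(\pr_1,\pr_2)$ as a map of fans, the dual embedding $j$ of dual cones, and the monoid-ring computation sending $x\mapsto x$, $y\mapsto y$, $u\mapsto st$ on each affine chart, glued over the maximal cells of $\P_b$. The only difference is that you spell out the gluing/fan-compatibility checks slightly more explicitly than the paper does, which is fine.
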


\section{Tropical Corals on $\trcone \RR$}
\label{A tropical counting problem}
\subsection{Tropical corals}
\label{subsec: tropical corals}
Throughout this section we fix $N=\ZZ^2$, and denote by $N_{0}$, $N_{>0}$ and $N_{<0}$, the set of elements $n\in N$ with  $\pr_2 (n) = 0$, $\pr_2 (n) > 0$, and $\pr_2 (n) < 0$ respectively, where $\pr_2$ is the projection map onto the second component. 
%Now we are ready to define the \emph{degree} $\Delta$ of a tropical coral $h:\Gamma\to \trcone \RR$, which is a map $\Delta: N\setminus N_0\to \NN$. 
We also set the following conventions. Given a simplicial complex $\bar{\Gamma}$, we denote by
\begin{center}
  \begin{tabular}{ll}
    $V(\bar{\Gamma})$ & the set of vertices of $\bar{\Gamma}$ \\
  $V_k(\bar{\Gamma})$ & the subset of vertices of valency $k$  in $V(\bar{\Gamma})$ \\
    $E(\bar{\Gamma})$ & the set of edges of $\bar{\Gamma}$ \\
    $\partial e$ & the set of vertices adjacent to an edge $e\in E(\bar{\Gamma})$.
  \end{tabular}
\end{center}
\noindent A \emph{bilateral graph} is a finite, connected $1$-dimensional simplicial complex $\bar{\Gamma}$ such that:
\begin{itemize}
\item[(i)] $\bar{\Gamma}$ has no divalent vertices.
\item[(ii)]  There is a partition
  \[ V(\bar{\Gamma})= V^{+}(\bar{\Gamma}) ~ \textstyle\amalg V^{0}(\bar{\Gamma}) ~ \textstyle\amalg ~ V^{-}(\bar{\Gamma}) \]
  of the set of vertices into \emph{positive vertices} $V^{+}(\bar{\Gamma})$,
  %$:=\{\mathrm{v}_1^{+}, \cdots,\mathrm{v}_l^{+}\}$, 
  \emph{interior vertices} $V^0(\bar{\Gamma})$, and \emph{negative vertices}  $V^{-}(\bar{\Gamma})$
  %:=\{\mathrm{v}_1^{-},\cdots,\mathrm{v}_m^{-}\}$, where $l$,~$m \geq 1$. 
  We assume that $V^{+}(\bar{\Gamma})$ and $V^{-}(\bar{\Gamma})$ are of non-empty.
%  We write $V_k^-(\bar{\Gamma})$ for the set of negative vertices of valency $k$.
\item[(iii)] All positive vertices are univalent, and no interior vertices are univalent:
  \[V_1(\bar{\Gamma})=  V^{-}_1(\bar{\Gamma}) \amalg V^{+}(\bar{\Gamma})  \]
\item[(iv)] Denoting by $\ell+1$ the cardinality of $V^+(\bar{\Gamma})$, we have a fixed labeling 
\begin{equation} v_0^+,\dots,v_\ell^+
\nonumber
\end{equation} 
of the positive vertices.
Denoting by $m$ the cardinality of 
$V^-(\bar{\Gamma})$, we have a fixed labeling 
\begin{equation} 
v_1^-,\dots, v_m^-
\nonumber
\end{equation}of the negative vertices.
\item[(v)] The first Betti number of $\bar{\Gamma}$ is zero.
\end{itemize}
%Moreover, we orient the edges of a bilateral graph towards one of univalent vertices of $\bar{\Gamma}$.

In the remaining part of this section, we omit the case where the cardinalities of both $V_1^{-}(\bar{\Gamma})$ and of $E(\bar{\Gamma})$ are one, as this case can be treated easily in all the arguments that are used.
% The set of edges adjacent to a positive vertex is denoted by
% \[E^{+}(\bar{\Gamma}):= \{ e_1^+,\ldots,e_l^+ ~|~  \partial e_i^+ \cap v_i^+ \neq \emptyset\} \subset E(\bar{\Gamma}).\]
%For each $v\in V^{-}_1(\bar{\Gamma})$, let $e_v$ be the edge adjacent to $v$. Throughout this section we omit the case where the cardinalities of both $V_1^{-}(\bar{\Gamma})$ and of $E(\bar{\Gamma})$ are one, as it can be treated easily in all the arguments we use. So, by connectivity of $\bar{\Gamma}$, for each $v\in V^-_1(\Gamma)$ there exists $v'\in V^0(\Gamma)$ such that $\partial e_v= \{ v,v'\}$.  We refer to $v'$ as
%the \emph{interior vertex associated to $v$}. 
%Note that condition (v) is necessary only if one is %interested in rational curve counts on the algebraic side %that we will discuss in the next sections, but it can be %omitted to study more general cases. 
%===========================================================
\begin{definition}
\label{Def: coral graph}
A \emph{coral graph} $\Gamma$ is the geometric realization (i.e. the underlying topological space \cite[Defn.1.39]{DK}) $|\bar{\Gamma}|$ of a bilateral graph $\bar{\Gamma}$, with positive vertices removed:
\[
\Gamma :=|\bar{\Gamma}|\setminus V^{+}(\bar{\Gamma})\]
\end{definition}
We call an edge $e$ a \emph{half-edge} if $\partial e$ is a single vertex. Note that a coral graph $\Gamma$ contains half-edges
\[ E^{+}(\Gamma):= \{ e ~|~  \text{$e = e^{+} \setminus \{v^+\}$ where $e^{+}\in E(\bar{\Gamma})$, $v^+\in V^{+}(\bar{\Gamma})$ such that  $\partial e^{+}=\{v^+\}$} \}\]
referred to as the set of \emph{positive edges} or \emph{unbounded edges} of $\Gamma$.
The fixed labeling 
of the positive vertices induces a fixed labeling 
\begin{equation}\label{positive_labelling}
\nonumber
e_0^+,\dots,e_\ell^+
\end{equation}
of the positive edges.

%A \emph{$\ell$-labelled coral graph} $(\Gamma,\mathbb{E})$ is a coral graph $\Gamma$ together with a choice of an ordered $\ell$-tuple of distinct positive edges
%\[ \mathbb{E}=(e_1,\ldots,e_\ell) \subset E^+(\Gamma) \]
%We sometimes say $(\Gamma,\mathbb{E})$ is a labelled coral graph if it is $\ell$-labelled for some $\ell\in \NN\setminus \{0\}$. 

% The sets of vertices and edges of $\Gamma$ are 
% \begin{eqnarray}
%  V(\Gamma) & = & V(\bar{\Gamma})\setminus\{\mathrm{v}_1^{+},\cdots,\mathrm{v}_l^{+}\}
% \nonumber \\
% E(\Gamma) &  = & \big( E(\bar{\Gamma})\setminus E^{+}(\bar{\Gamma}) \big) ~\cup E^{+}(\Gamma)
% \nonumber
% \end{eqnarray} 
The set 
\[ E^b(\Gamma):= E(\Gamma) \setminus E^{+}(\Gamma) \]
is referred to as the set of \emph{bounded edges} of $\Gamma$, and the set 
\[ V^{-}(\Gamma):= V^{-}(\bar{\Gamma}) \] 
is referred to as the set of \emph{negative vertices} of $\Gamma$. 
%Note that $\Gamma$ is also endowed with a weight function $w:E(\Gamma)\to \NN\setminus\{0\}$ defined by
%\[w =
%\left\{
%	\begin{array}{ll}
%		w_{\bar{\Gamma}}  & \mbox{on } E(\bar{\Gamma})\setminus E^{+}(\bar{\Gamma}) \\
%		w_{\bar{\Gamma}}( e_i^{+}) & \mbox{on } e_i \in E^{+}(\Gamma),~\text{for}~ i=1,\ldots,l 
%	\end{array}
%\right.
%\]
A \emph{weighted graph} $\Gamma$ is a graph endowed with a \emph{weight function} $w_{\Gamma}: E(\Gamma)\to \mathbb{N}\setminus
\{0\}$. The image $w_\Gamma(e)$ of $e\in E(\Gamma)$ under $w_\Gamma$ is referred to as the \emph{weight of} the edge $e$. 
\begin{definition}
\label{parameterized tropical coral}
A \emph{(parameterized) tropical coral} in $\overline{C}\RR$ is a \emph{proper} map 
\[h:\Gamma \to \overline{C}\RR\,,\]
where $\Gamma$ is a weighted coral graph,
satisfying the following:
\begin{enumerate}
\item[(i)] For all $e \in E(\Gamma)$, the restriction $h|_e$ is an embedding and $h(E)$ is contained in an integral affine submanifold of $\trcone \RR$.
\item[(ii)] For all $v \in V^{0}(\bar{\Gamma})$; 
\[h(v)\in \overline{C}\RR \setminus \partial \overline{C}\RR\]
where $\partial \overline{C}\RR$ denotes the boundary of the truncated cone $\overline{C}\RR$. Moreover, the following balancing condition holds: 
\begin{eqnarray}\label{balancing condition}
\nonumber
\sum_{j=1}^k w_\Gamma(e_j) u_j = 0
\end{eqnarray}
where $\{e_1,\ldots,e_k\} \subset E(\Gamma)$ is the set of edges adjacent to $v$, $w_\Gamma(e_j)\in \NN\setminus \{ 0\}$ is the weight on $e_j$ and $u_j\in N$ is the primitive integral vector emanating from $h(v)$ in the direction of $h(e_j)$ for $j=1,\ldots,k$.
\item[(iii)] For all $v \in V^{-}(\Gamma)$, we have $h(v) \in \partial\overline{C}\RR$
and there exists $w_v\in \NN\setminus \{0\}$ associated to $v$, referred to as the \emph{weight on $v$}, such that the following balancing condition holds: 
\begin{eqnarray}\label{negative balancing}
\nonumber
w_v\cdot u_v+\sum_{j=1}^k w_\Gamma(e_j) u_j = 0
\end{eqnarray} 
where $\{e_1,\ldots,e_k\} \subset E(\Gamma)$ is the set of edges adjacent to $v$ and $u_v\in N$ is the primitive integral vector emanating from $h(v)$ in the direction of the origin in $N_\RR$.
\item[(iv)] For all 
$e \in E^+(\Gamma)$, the restriction to $h(e)$ of the projection map 
\[{\pr}_2:\trcone \RR \rightarrow [1,\infty)\] 
onto the second factor is proper. 
\item[(v)] For all $v \in V(\Gamma)$; $h(v) \in N_{\QQ}=N\otimes_\ZZ \QQ$, where $\QQ$ is the set of rational numbers.
\end{enumerate}
We illustrate the image of a tropical coral in Figure \ref{ExCoral}. An \emph{isomorphism} of tropical corals $h: \Gamma \to \overline{C}\RR$ and
$h': \Gamma' \to \overline{C}\RR$ is a homeomorphism $\Phi: \Gamma \to \Gamma'$
respecting the weights of the edges and such that $h = h' \circ
\Phi$. A \emph{tropical coral} is an isomorphism class of
parameterized tropical corals. 

%A \emph{$\ell$-labelled tropical coral} denoted by\[ (\Gamma,\mathbb{E},h) \]is a tropical coral $h:\Gamma\to \trcone \RR$ together with a choice of an ordered $\ell$-tuple of positive edges $\mathbb{E}=(e_1,\ldots,e_\ell) \subset E^+(\Gamma)$.
\end{definition}
The definition of a tropical coral can be generalized to tropical corals in $\trcone B$, for any integral affine manifold $B$. Indeed in \S\ref{Taking the quotient}, to study the invariants of the Tate curve, we shall apply the quotient given by the $\ZZ$-action on $(\RR, \P_b)$ and work over 
\[B:=\trcone (\RR/\ZZ) =\trcone S^1 \] 
Then, condition (iv) of the definition \ref{parameterized tropical coral} ensures that there is no infinite wrapping of the unbounded edges of tropical corals in $\trcone S^1$. %We ignore the $\ZZ$-quotient for the time being for computational purposes, since the tropical corals in $\trcone S^1$ lift to $\trcone \RR$, as well as the corresponding log geometric invariants lift to the unfolded Tate curve as we will see in section \ref{Taking the quotient}.
\begin{definition}
\label{def_general}
We call a tropical coral $h:\Gamma \rightarrow \trcone \RR$ \emph{general} if all interior vertices of $\Gamma$ are trivalent and all negative vertices are univalent.
Otherwise it is called \emph{degenerate}. 
\end{definition}
%In the following picture we illustrate a general and a degenerate tropical coral.
%\begin{center}
%\resizebox{0.6\textwidth}{!}{
%\input{generic.pspdftex}
%}
%\end{center}
\subsection{Incidences for tropical corals}
\label{Incidences for tropical corals}
\begin{definition}
\label{type of a coral graph}
For $\Gamma$ a coral graph, we denote by
\[F(\Gamma)=\{(v,e)\,|\, e\in E(\Gamma)~\mathrm{and}~v\in\partial e\}\] the set of \emph{flags} of $\Gamma$. 

A \emph{type} is a pair $(\Gamma,u)$ consisting of a weighted coral graph $\Gamma$ and a map
\begin{align*}
u:F(\Gamma)~ \amalg ~ V^{-}(\Gamma)& \lra N \\
F(\Gamma) \ni (v,e)  & \longmapsto  u_{v,e}\\
V^{-}(\Gamma) \ni v  &  \longmapsto  u_v
\end{align*}
where $u_v$ and $u_{v,e}$ are primitive integral vectors in $N$.
%, is referred to as the \emph{type of $\Gamma$}.
\end{definition}
\begin{definition}
\label{type of a tropical coral}
The \emph{type of a tropical coral} $h: \Gamma \to \trcone \RR$ is the
type $(\Gamma, u)$ where the map $u:F(\Gamma) ~ \amalg ~ V^{-}(\Gamma) \to N$ is given by assigning to each $(v, e)\in F(\Gamma)$ the primitive integral vector $u_{v,e} \in N$ emanating from $h(v)$ in the direction of $h(e)$ and assigning to each $v\in V^{-}(\Gamma)$ the primitive integral vector $u_v\in N$ emanating from $h(v)$ in the direction of the origin. We denote the set of tropical corals of type $(\Gamma,u)$ by
$\T_{(\Gamma,u)}$.
\end{definition}
To set up our counting problem for tropical corals, we define tropical incidence conditions, given by \emph{degree} and \emph{asymptotic constraints} defined as follows. 

\begin{definition}
\label{degree}
A \emph{degree} with $\ell+1$ positive and $m$ negative entries is a tuple 
\[\Delta:=(\overline{\Delta},\underline{\Delta})\in N^{\ell+1}\times N^m\]
of elements in the lattice $N$, where
$\overline{\Delta}=(\overline{\Delta}^0,\ldots,\overline{\Delta}^\ell)$, and $\underline{\Delta}=(\underline{\Delta}^1,\ldots,\underline{\Delta}^m)$, with
\begin{align}
\pr_2(\overline{\Delta}^i) & >  0
\nonumber \\
\pr_2(\underline{\Delta}^j) & < 0
\nonumber
\end{align}
for all $0\leq i\leq \ell$ and $1\leq j\leq m$, 
where $\pr_2:N= \ZZ \oplus \ZZ\to \ZZ$ is the projection onto the second factor.
\end{definition}

\begin{definition}
\label{tropical degree}
Let $(\Gamma,u)$ be a type where $\Gamma$
has $\ell+1$ positive edges and $m$ negative vertices.
The \emph{degree of $(\Gamma,u)$} is the degree 
\[\Delta=(\overline{\Delta},\underline{\Delta})\in N^{\ell+1}\times N^m\]
where, for every $0 \leq i \leq \ell$,
denoting $\partial e_i^+=\{v_i\}$,
\[ \overline{\Delta}^i := w_\Gamma(e_i^+) \cdot u_{v_i,e_i^+}\]
and, for every 
$1 \leq j \leq m$,
\[ \underline{\Delta}^j:=w_{v_j^-} \cdot u_{v_j^-}\,.\]

%\[\Delta:=(\overline{\Delta},\underline{\Delta})\]is the map $\Delta:N\setminus N_0\to \NN$ of finite support given by
%\[ \Delta =
%\left\{	\begin{array}{ll}
%		\overline{\Delta}  & \mbox{on } N_{> 0} \\
%		\underline{\Delta} & \mbox{on } N_{< 0}
%	\end{array}\right.\]
%where \begin{align*}
 % \overline{\Delta}(n) &:=~\bigm|\big\{ (v,e) \in F(\Gamma)~|~\text{$e\in E^{+}(\Gamma)$ and $w_\Gamma(e)\cdot u_{v,e}= n$} \big\} \bigm| \\
 % \underline{\Delta}(n)& :=~\bigm| \big\{ v\in V^{-}(\Gamma) \,\big|\, w_v\cdot u_{v}=n \big\} \bigm|  
%\end{align*}
%where $w_\Gamma(e)$ is the weight on $e\in E(\Gamma)$, $u_{v,e}:= u ((v,e))$, $w_v$ is the weight on $v\in V^{-}(\Gamma)$, and $u_v:= u (v)$.
\end{definition}
\begin{definition}
\label{degree of a tropical coral}
The \emph{degree of a tropical coral} $h:\Gamma \to \trcone \RR$ is the degree of its type. 
\end{definition}
%In other words, the degree of a tropical coral is the abstract set of directions of unbounded edges together with their weights, with repetitions allowed.

%\begin{remark} Note that
%\[ \bigm| \overline{\Delta} \bigm|:=\sum_{n\in N_{>0}}   \overline{\Delta}(n)  \]is equal to the number of unbounded edges of $h$ and  \[ \bigm| \underline{\Delta} \bigm|:=\sum_{n\in N_{<0}}  \underline{\Delta}(n) \]is equal to the number of negative vertices of $h$.\end{remark}

%\begin{definition}
%\label{def_set_of_degrees}We denote by $\Mod(N\setminus N_0,\NN)$ the set of maps $\Delta=(\overline{\Delta},\underline{\Delta}):N\setminus N_0\to \NN$ with finite support such that $\underline{\Delta}(N_-) \subset \{0,1\}$, and for every primitive $u \in N_-$, there exists at most one $w \in \NN \setminus \{0\}$ such that $\underline{\Delta}(w \cdot u)=1$.\end{definition}

%\begin{remark}\label{distinct negative directions}If $\Delta \in \Mod(N\setminus N_0,\NN)$ and $h:\Gamma \to \trcone \RR$ is a tropical coral of degree $\Delta$ then the primitive integral vectors $u_v$ assigned to $v\in V^-(\Gamma)$ are all distinct. In particular, the points $h(v) \in \partial \overline{C}\RR$ are distinct.\end{remark}

\begin{definition}
\label{asymptotic constraint}
Let $\Delta$ be a degree with $\ell+1$
positive and $m$ negative entries.
An \emph{asymptotic constraint}
for $\Delta$ is a $\ell$-tuple 
\[ \lambda=(\lambda_1,\ldots,\lambda_\ell)\in \prod_{i=1}^{\ell} N_{\QQ}/ (\QQ \cdot \overline{\Delta}^i)\,.\]

%Let  $\Delta \in \Mod(N\setminus N_0,\NN)$.
%An \emph{asymptotic constraint of $\ell$-incidences} for $\Delta$ is a pair $(\mathfrak{u},\lambda)$, where \begin{itemize}
%\item[(i)] $\mathfrak{u}=(u_1,\ldots,u_\ell) \in N_{>0}^\ell$ 
%such that for every $n \in N_{>0}$, the number of $u_i$'s such that $u_i=n$ is less or equal than $\overline{\Delta}(n)$.\item[(ii)] $\lambda=(\lambda_1,\ldots,\lambda_\ell)\in \prod_{i=1}^{\ell} N_{\RR}/ (\RR \cdot u_i)$. \end{itemize}

%Let $(\Gamma,u)$ be the type of a $k$-labelled coral graph $(\Gamma,\mathbb{E})$ where $\mathbb{E}=(e_1,\ldots,e_k)$ with $e_i\in E^+(\Gamma)$ for $i=1,\ldots,k$.
%Assume $\partial e_i=v_i\in V^0(\Gamma)$ and let \[u_i:=u((v_i,e_i))\in N.\] 
%Note that the integral vectors $u_1,\ldots,u_k$ are determined by the degree $\Delta$ of $(\Gamma,u)$. An \emph{asymptotic constraint of $k$-incidences for the degree $\Delta$} of $(\Gamma,u)$ is an asymptotic constraint for the $k$-tuple of integral vectors $(u_1,\ldots,u_k)$. An asymptotic constraint $\lambda=(\lambda_1,\ldots,\lambda_{k})\in \prod_{i=1}^{k} N_{\RR}/\RR \cdot u_i $ for a tropical coral $h: \Gamma \to \overline{C}\RR$ is an asymptotic constraint for its type. 
\end{definition}

\begin{definition}
\label{trop_coral_match}
Let $\Delta$ be a degree with 
$\ell+1$ positive entries, and $\lambda$ an asymptotic constraint for $\Delta$.
We say that a tropical coral 
$h: \Gamma \rightarrow \overline{C}\RR$
\emph{matches} 
$(\Delta,\lambda)$ if the degree of $h: \Gamma \rightarrow \overline{C}\RR$ is 
$\Delta$, and for every $1 \leq i\leq \ell$, 
the image of $h(e_i^+)$ under 
the quotient map $N_{\QQ}\lra N_{\QQ}/\QQ \cdot \overline{\Delta}^i$ is $\lambda_i$.

%Let  $\Delta \in \Mod(N\setminus N_0,\NN)$ and $(\mathfrak{u},\lambda)$ an asymptotic constraint of $k$-incidences for $\Delta$, with
%$\mathfrak{u}=(u_1,\dots,u_\ell)$and $\lambda=(\lambda_1,\dots,\lambda_\ell)$.We say that a $\ell$-labelled tropical coral 
%$(\Gamma,\mathbb{E},h)$ of degree $\Delta$ \emph{matches} the constraint $(\mathfrak{u},\lambda)$ if,
%denoting $\mathbb{E}=(e_1,\dots,e_\ell)$ and $\partial e_i=\{v_i\}$, for every $1 \leq i \leq \ell$, we have \[ w_\Gamma(e_i) \cdot u_{v_i,e_i}=u_i\]
%and the image of $h(e_i)$ under the quotient map $N_{\RR}\lra N_{\RR}/u_i \cdot \RR$ is $\lambda_i$.
\end{definition}

\begin{remark}
An asymptotic constraint $\lambda$ constrains the positive edges $e_1^+,\dots,e_\ell^+$, but not the positive edge $e_0^+$.
\end{remark}

\begin{definition}
Let $\Delta$ be a degree. We say that an
an asymptotic constraint $\lambda$ for $\Delta$
is \emph{general} if every tropical coral  matching  $(\Delta,\lambda)$  is general as in Definition \ref{def_general}. 

%Let $\Delta\in \Mod(N\setminus N_0,\NN)$. We say that an asymptotic constraint  $(\mathfrak{u},\lambda)$ of $\ell$-incidences for $\Delta$ is \emph{general} if
%$\ell=|\overline{\Delta}|-1$ and every labelled tropical coral of degree $\Delta$ matching  $(\mathfrak{u},\lambda)$  is general as in Definition \ref{def_general}. 
\end{definition}
For a type $(\Gamma,u)$ of degree $\Delta$, 
and  $\lambda$ a general asymptotic constraint for $\Delta$, 
we denote the set of tropical corals of type $(\Gamma,u)$ matching $(\Delta,\lambda)$ by $\T_{(\Gamma,u)}(\lambda)$.
\begin{remark}
\label{existence of general constraints}
A non-general tropical coral can always be deformed into a general tropical coral analogously to the case of tropical curves \cite[\S2]{M}. This is possible since non-general tropical corals are obtained by taking the limit of the lengths of some edges in general tropical corals to zero. It follows that the non-general tropical corals of a given type $(\Gamma,u)$ form a lower dimensional strata of the moduli space $\mathcal{\T}_{(\Gamma,u)}$ \cite[Prop $2.14$]{M}. Hence, the types of the non-general corals form a nowhere dense subset in the space of constraints. This ensures the existence of general asymptotic constraints.
\end{remark}

\subsection{Extending a tropical coral to a tropical curve}
\label{Extending a tropical coral to a tropical curve}

We first explain how the notions of degree and asymptotic constraints introduced in the previous sections for tropical corals can be interpreted as degree and asymptotic constraints for tropical curves as in \cite{NS}.

\begin{definition}
\label{trop_degree}
Let $\Delta$ be a degree as in Definition \ref{degree}. Let $\widetilde{h}:\widetilde{\Gamma} \to \RR^2$
be a tropical curve as in \cite{NS}, with 
$(\ell+1+m)$-unbounded edges which are labelled 
$\tilde{e}_0^+,\dots, \tilde{e}_\ell^+, \tilde{e}_1^-, \dots,\tilde{e}_m^-$.
We say that the tropical curve $\widetilde{h}:\widetilde{\Gamma} \to \RR^2$
is of degree $\Delta$ if 
for every $0 \leq i\leq \ell$, denoting 
$\partial \tilde{e}_i^+=\{\tilde{v}_i\}$, 
\[ \overline{\Delta}^i=w_{\tilde{\Gamma}}(\tilde{e}_i^+) \cdot u_{\tilde{v}_i, \tilde{e}_i^+}\]
and for every $1 \leq j \leq m$, 
denoting 
$\partial \tilde{e}_j^-=\{\tilde{v}_j\}$, 
\[ \underline{\Delta}^j=w_{\tilde{\Gamma}}(\tilde{e}_j^-) \cdot u_{\tilde{v}_j, \tilde{e}_j^-}\,.\]
\end{definition}

\begin{definition}
\label{trop_const}
Let $\Delta$ be a degree and 
$\lambda$ an asymptotic constraint for 
$\Delta$ as in Definition \ref{asymptotic constraint}. Let $\widetilde{h}:\widetilde{\Gamma} \to \RR^2$
be a tropical curve as in \cite{NS}, with 
$(\ell+1+m)$-unbounded edges which are labelled 
$\tilde{e}_0^+,\dots, \tilde{e}_\ell^+, \tilde{e}_1^-, \dots,\tilde{e}_m^-$.
We say that $\widetilde{h}:\widetilde{\Gamma} \to \RR^2$ \emph{matches} $(\Delta,\lambda)$
if it is of degree $\Delta$, and:
\begin{itemize}
    \item[(i)] for every $1 \leq i\leq \ell$, the image of $\tilde{h}(\tilde{e}_i^+)$ under the quotient map $N_\QQ \rightarrow N_\QQ/\QQ \cdot \overline{\Delta}^i$ is $\lambda_i$,
    \item[(ii)] for every $1 \leq j\leq m$, the image of $\tilde{h}(\tilde{e}_j^-)$ under the quotient map $N_\QQ \rightarrow N_\QQ/\QQ \cdot \underline{\Delta}^j$ is $0$.
\end{itemize}
\end{definition}

We then explain how to extend tropical corals to tropical curves.

\begin{definition}
We define the \textit{extension} of a tropical coral $h:\Gamma\to \trcone \RR$ as the 
following tropical curve $\widetilde{h}:\widetilde{\Gamma} \to \RR^2$
in the sense of \cite{NS}: 
\begin{itemize}
\item[(i)]$\widetilde{\Gamma}$ is obtained from 
$\Gamma$ by first adding for each negative vertex 
$v_j^- \in V^-(\Gamma)$ an unbounded edge $E_{v_j^-}$ incident to $v_j^-$ and then by adding a $2$-valent vertex $V_{v_j^-}$ on each $E_{v_j^-}$,
\item[(ii)]the weight function is extended to 
$\tilde{\Gamma}$ by $w_{\Tilde{\Gamma}}(E_{v_j^-})=w_{v_j^-}$, where 
$w_{v_j^-}$ is the weight on $v_j^-$ as in
Definition \ref{parameterized tropical coral}(iii), 
\item[(iii)] $\tilde{h}$ extends $h$ by mapping
$E_{v_j^-}$ onto the half-line emanating from $h(v_j^-)$ and passing through the origin $0$ in 
$\RR^2$, in such a way that $h(V_{v_j^-})=0$. 
\end{itemize}
\end{definition}
Note that the primitive integral direction of $h(E_{v_j^-})$
emanating from $h(v_j^-)$ is $u_{v_j^-}$
as in Definition \ref{parameterized tropical coral}(iii), and so the balancing condition for the tropical curve $\widetilde{h}:\widetilde{\Gamma} \to \RR^2$
at the vertex $v_j^-$ follows from the balancing condition in 
Definition \ref{parameterized tropical coral}(iii) for the tropical coral $h:\Gamma\to \trcone \RR$.
For details of this construction see \S $2.3$ of \cite{MyThesis}. 
The tropical curve 
$\tilde{h} \colon \tilde{\Gamma} \rightarrow \RR^2$ contains $(\ell+1+m)$-unbounded edges,
that we label 
$\tilde{e}_0^+,\dots, \tilde{e}_\ell^+, \tilde{e}_1^-, \dots,\tilde{e}_m^-$, where 
$\tilde{e}_i^+:=e_i^+$ for $0 \leq i\leq \ell$ and $\tilde{e}_j^-=E_{v_j^-}$ for 
$1 \leq j \leq m$. By construction, the tropical curve $\tilde{h} \colon \tilde{\Gamma} \rightarrow \RR^2$ matches 
$(\Delta, \lambda)$ as in Definition \ref{trop_const}.

%Note that the extension $\widetilde{h}:\widetilde{\Gamma}\to \RR^2$ is a particular kind of a tropical curve, whose type $(\widetilde{\Gamma},\widetilde{u})$
%and degree $\widetilde{\Delta}$ are defined analogously as in \cite[\S 1]{NS}. It follows immediately from the definitions that the type (resp. degree) of a tropical coral uniquely determines the type (resp. degree) of the extension and vice versa.

\begin{example}
In Figure \ref{ExtensionOfTheCoral}, we illustrate a tropical coral on the truncated cone $\trcone \RR$ and its extension to a tropical curve in $\RR^2$.
\begin{figure}
\resizebox{.8\linewidth}{!}{
\input{ExtensionOfTheCoral.pspdftex}}
\caption{A tropical coral and its extension.}
\label{ExtensionOfTheCoral}
\end{figure}
\end{example}

\subsection{Tropical corals of a fixed type}
\label{The space of general tropical corals of fixed type}
In this section, we show that the space $\mathcal{T}_{(\Gamma,u)}$ of isomorphism classes of general tropical corals of given type $(\Gamma,u)$ forms a convex polyhedron of dimension $l-1$ where $l$ is the number of unbounded edges of $\Gamma$. 

\begin{proposition}
\label{EmbeddingOfT}
The set of general tropical corals of type $(\Gamma,u)$ is embedded into $\RR^{l-1}_{>0}$, where $l$ is the number of unbounded edges of $\Gamma$.
\end{proposition}
\begin{proof}
Let $(h:\Gamma\rightarrow \trcone \RR)\in \T_{(\Gamma,u)}$ be a general tropical coral of type $(\Gamma,u)$ with $l$ unbounded edges. Assume $\Gamma$ has $m$ negative vertices. Since $h$ is general, $\Gamma$ has $m+l-2$ interior vertices and $l-2$ among them, which we will denote by
\[ \{ v_1^0,\ldots,v_{l-2}^0  \} \subset V^0(\Gamma)\]
are not adjacent to any negative vertex of $\Gamma$. 

For any vertex $v\in V^0(\Gamma)$, let $\rho(v)=\pr_2(h(v))$,
where $\pr_2:\RR^2 \lra \RR$ is the projection map onto the second factor. Fix an interior vertex 
\[v\in V^0(\Gamma) \setminus \{ v_1^0,\ldots,v_{l-2}^0  \} \] 
which is adjacent to a negative vertex, and define
\begin{eqnarray}
\label{Eq:EmbeddingPhi}
\Phi: \T_{(\Gamma,u)} &\hookrightarrow&  \RR_{>0}^{l-1} 
\nonumber \\
h & \mapsto & (\rho (v), \rho(v_1^0),\ldots,\rho (v_{l-2}^0) )
\nonumber
\end{eqnarray}
We will show that fixing $(\rho (v), \rho(v_1^0),\ldots,\rho (v_{l-2}^0) ) \in \RR_{>0}^{l-1} $ determines $h\in \T_{(\Gamma,u)}$ uniquely. First, observe that the positions of images of all negative vertices $V^-(\Gamma)$ under $h$ are fixed by the degree of $(\Gamma,u)$. The position of $h(v)$ is also fixed since $v$ is connected to a negative vertex $v^-\in V^-(\Gamma)$, and we also had fixed the type of $\Gamma$. Proceeding inductively, we can determine the images of all vertices of $\Gamma$ under $h$. Hence, $\Phi$ is injective, and the result follows. 
\end{proof}
We next describe how to obtain a tropical coral by a gluing construction of \emph{coral blocks}, which are more general objects than tropical corals, defined as follows.
\begin{definition}
Let $\Gamma_{m,l}$ be a coral graph with $m$ negative vertices and $l$ positive edges.
%\[V^{-}(\Gamma_{m,l}) =  \{ v_1^{-},\ldots,v_m^{-}\}, \,\ \,\ E^{+}(\Gamma_{m,l}) =  \{e_1,\ldots,e_l  \}\]
A \emph{coral block} is a proper map $h:\Gamma_{m,l}\rightarrow \RR^2$ which satisfies all conditions of Definition \ref{parameterized tropical coral}, except Item $(iv)$. We furthermore require $0 \notin h(e)~\mathrm{ ~for~ any}~ e\in E^{+}(\Gamma)$, where $0$ denotes the origin in $\RR^2$.
\end{definition}
We call a coral block $h:\Gamma_{m,l} \rightarrow \RR^2$ \emph{general} if all vertices $v\in V^{0}(\Gamma_{m,l})$ are trivalent, and all vertices $v\in V^-(\Gamma_{m,l})$ are univalent. Note that tropical corals are particular types of coral blocks, in which the image of all unbounded edges lie in $\trcone \RR$ and the projection of each unbounded edge onto the second factor is proper. 

The \emph{type} of a coral block $h:\Gamma_{m,l} \rightarrow \RR^2$ analogously as the type of a tropical coral and denoted by $(\Gamma_{m,l},u)$. The set of isomorphism classes of coral blocks of a given type $(\Gamma_{m,l},u)$ is denoted by ${\mathcal{\T}_{(\Gamma_{m,l},u)}}$. 
\begin{construction}
\label{constr:gluing}
We describe the gluing of coral blocks as follows. Let 
\begin{eqnarray}
(h_1:\Gamma_{m_1,l_1}^1 \rightarrow \RR^2) & \in & \mathcal{\T}_{(\Gamma_{m_1,l_1},u_1)}
\nonumber \\
(h_2:\Gamma_{m_2,l_2}^2 \rightarrow \RR^2) & \in & \mathcal{\T}_{(\Gamma_{m_2,l_2},u_2)}
\nonumber
\end{eqnarray}
be two coral blocks, and assume there exists edges
\begin{eqnarray}
e_1 & \in & E^{+} (\Gamma_{m_1,l_1})~\mathrm{such~that}~e_1~\mathrm{is~adjacent~to}~v_1\in V^{0}(\Gamma_{m_1,l_1})
\nonumber \\
e_2 & \in & E^{+} (\Gamma_{m_2,l_2})~\mathrm{such~that}~e_2~\mathrm{is~adjacent~to}~v_2\in V^{0}(\Gamma_{m_2,l_2})
\nonumber
\end{eqnarray}
Let
\[ w_1:E(\Gamma_{m_1,l_1}) \rightarrow \NN\setminus \{0\}
\,\ \,\ \mathrm{and} \,\ \,\
w_2:E(\Gamma_{m_2,l_2})  \rightarrow  \NN\setminus \{0\}  \] 
be the weight functions on edges of $\Gamma_{m_1,l_1}$ and $\Gamma_{m_2,l_2}$ respectively, and assume we have  $w_1(e_1)=w_2(e_2)$. Assume furthermore that
\[u_{e_1}=-u_{e_2}\]
where $u_{e_i}$ denotes the primitive integral vector emanating from $v_i$ in the direction of $e_i$. Let
\[ N_{\RR}/\RR u:= N_{\RR}/\RR u_{e_1}=N_{\RR}/\RR u_{e_2} \]
Then, for $i\in \{1,2\}$ the maps
\begin{align}
f_i:\mathcal{\T}_{(\Gamma_{m_1,l_1},u_1)} &\rightarrow N_{\RR}/\RR u_{e_1}
\nonumber \\
h_i &\mapsto [h_i(v_i)] 
\nonumber
\end{align}
induce the map
\begin{align}
f:\mathcal{\T}_{(\Gamma_{m_1,l_1},u_1)} \times \mathcal{\T}_{(\Gamma_{m_2,l_2},u_2)} &\rightarrow N_{\RR}/\RR u
\nonumber \\
(h_1,h_2) &\mapsto [h_1(v_1)-h_2(v_2)]. 
\nonumber
\end{align}
Assume 
\[f(h_1,h_2)= 0 \in N_{\RR}/\RR u\] and
\[ h(v_1)-h(v_2)=\lambda\cdot u_1 ~ \mathrm{for} ~ \lambda\in\RR_{>0} \]
Then we can define a glued coral block $h_{12}:\Gamma_{m,l}\rightarrow \RR^2$ as follows. Define the vertex set $V(\Gamma_{m,l})$ as the disjoint union
\[V(\Gamma_{m,l}):=V(\Gamma_{1})\amalg V(\Gamma_{2})  \]
and the edge set $E(\Gamma_{m,l})$ as
\[E(\Gamma_{m,l}):=E(\Gamma_{1})\setminus \{ e_1 \}\amalg E(\Gamma_{2}) \setminus \{ e_2 \} \amalg \{e_{12}\} \]
where $e_{12}$ is the edge such that $\partial e_{12}=\{ v_1,v_2 \}$. Define $E_{12}$ to be the line segment in $\trcone \RR$ such that $\partial E_{12}=\{ h(v_1),h(v_2)\}$. 
Define the weight function 
$w_{12}:E(\Gamma_{m,l}) \rightarrow \NN\setminus \{0\}$
by 
\[ w_{12}:= \begin{cases} 
      w_1 & \mathrm{on}~E(\Gamma_{m_1,l_1}^1) \setminus \{e_1\} \\
      w_1(e_1)=w_2(e_2) & \mathrm{on}~ e_{12} \\
     w_2 & \mathrm{on}~ E(\Gamma_{m_2,l_2}^2) \setminus \{e_2\}
   \end{cases}
\]
Define the coral block $h_{12}:\Gamma_{m,l}\rightarrow \RR^2$ by
\[ h_{12}:= \begin{cases} 
      h_1 & \mathrm{on}~ V(\Gamma_{m_1,l_1}^1)\cup E(\Gamma_{m_1,l_1}^1) \setminus \{e_1\} \\
      E_{12} & \mathrm{on}~ e_{12} \\
     h_2 & \mathrm{on}~ V(\Gamma_{m_2,l_2}^2)\cup E(\Gamma_{m_2,l_2}^2) \setminus \{e_2\} 
   \end{cases}
\]
We say  $h_{12}:\Gamma_{m,l}\rightarrow \RR^2$ is obtained by \emph{gluing} $h_1$ and $h_2$ along the edges $e_1$ and $e_2$. 
\end{construction}
The following is an immediate corollary of Construction \ref{constr:gluing}.
\begin{lemma}
\label{glueing of bouquets}
Any coral block $(h_{12}:\Gamma_{m,l}\rightarrow\RR^2)\in \mathcal{\T}_{(\Gamma_{m,l},u)}$ where $m>1$,  can be obtained by gluing coral blocks
\[ h_1\in \mathcal{\T}_{(\Gamma_{m_1,l_1},u_1)} \,\ \,\ \mathrm{and} \,\ \,\  h_2\in \mathcal{\T}_{(\Gamma_{m_2,l_2},u_2)} \]
where
\[m=m_1+m_2 \,\ \,\ \mathrm{and} \,\ \,\ l=(l_1-1)+(l_2-1)=l_1+l_2-2.\] 
\end{lemma}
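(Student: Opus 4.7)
The plan is to exhibit a bounded interior-interior edge $e_{12}\in E^b(\Gamma_{m,l})$ whose removal splits the tree $\Gamma_{m,l}$ into two subtrees $\Gamma^{(1)}$ and $\Gamma^{(2)}$, each becoming the coral graph of a smaller coral block $h_i$ after attaching a new positive half-edge at the cut vertex. Since the gluing construction is reversible, this will present $h_{12}$ as the gluing of $h_1$ and $h_2$ along the new half-edges.

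Since $\bar{\Gamma}_{m,l}$ has vanishing first Betti number, $\Gamma_{m,l}$ is a tree, and $m>1$ ensures the existence of at least one interior-interior bounded edge. The distinguished positive edge $e^*$ of $h_{12}$ is adjacent to either $v_r$ or $v_l$; assume WLOG the former, so $v_r$ is the interior neighbor of $v_{\mathrm{min}}$. Let $v'$ be the negative vertex with the second-smallest image. Take $e_{12}$ to be the first interior-interior edge along the unique path in the tree from $v_r$ to $v'$, and denote its other endpoint by $v_*$. Define $\Gamma^{(1)}$ to be the subtree containing $v_r$, consisting of $v_r$, the leaf $v_{\mathrm{min}}$, the half-edge $e^*$, and a new positive half-edge at $v_r$ carrying the direction and weight of $e_{12}$; define $\Gamma^{(2)}$ as the subtree containing $v_*$, with an analogous new positive half-edge at $v_*$ oriented oppositely. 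This yields $m_1=1,\, l_1=2$ and $m_2=m-1,\, l_2=l$, which satisfy $m_1+m_2=m$ and $l_1+l_2-2=l$. Extending $h_{12}$ to $h_1,h_2$ by sending each new half-edge to the corresponding ray from the cut vertex, the balancing condition at every vertex of $\Gamma^{(i)}$ holds, since at non-cut vertices it is unchanged from $h_{12}$ and at the cut vertices $v_r,v_*$ the new half-edge exactly replaces the contribution of the removed edge. The only interior vertex of $\Gamma^{(1)}$ is $v_r$, trivially both right-most and left-most, so $e^*$ remains the distinguished edge of $h_1$. In $\Gamma^{(2)}$, the distinguished edge is the new half-edge at $v_*$; by our choice of $e_{12}$, the negative vertex $v'$ has minimal image in $\Gamma^{(2)}$ and is adjacent to $v_*$, so $v_*$ is right-most in $\Gamma^{(2)}$. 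The gluing compatibility $u_{e_1}=-u_{e_2}$, $w_1(e_1)=w_2(e_2)$, and $h_1(v_r)-h_2(v_*)\in\RR\cdot u_{e_1}$ with the correct sign hold by construction, so gluing $h_1$ and $h_2$ reproduces $h_{12}$.

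The main obstacle is establishing the extremality of the cut endpoint $v_*$ as the right-most (or left-most) interior vertex of $\Gamma^{(2)}$. In the general case with trivalent interior vertices, the choice of $e_{12}$ above works directly because $v'$ is adjacent to $v_*$. In degenerate cases — higher-valency $v_r$, $v'$ not adjacent to $v_*$, or $v_r=v_l$ — the choice of $e_{12}$ must be adjusted, for instance by pushing it further along the path toward $v'$ until the cut endpoint becomes the interior neighbor of the smallest-image negative vertex of the resulting $\Gamma^{(2)}$; the rest of the verification is unchanged. The symmetric case in which $e^*$ is adjacent to $v_l$ instead of $v_r$ is handled analogously using $v_{\mathrm{max}}$.
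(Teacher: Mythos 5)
Your overall strategy is the paper's: sever one bounded edge of the tree, turn the two stumps into unbounded positive edges, and observe that the gluing construction reassembles $h_{12}$. Where you diverge is in the choice of the edge to cut, and that is exactly where the gaps are. The paper cuts an edge lying on a path joining two negative vertices and, crucially, chosen so that the affine line through its image does not contain the origin. That choice does two jobs at once: it puts one negative vertex in each piece (so $m_1,m_2\ge 1$), and it guarantees that the two new rays have image in $\RR^2\setminus\{0\}$, which is what the definition of the relaxed positive edge of a coral block demands. You never address the origin condition at all; an arbitrary interior--interior edge, extended to a ray, can perfectly well pass through the origin, and then neither piece is a coral block.

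Two of your concrete claims also fail. First, ``$m>1$ ensures the existence of at least one interior--interior bounded edge'' is unargued, and the bookkeeping $m_1=1$, $l_1=2$ presupposes that after the cut the component of $v_r$ consists only of $v_r$, $v_{\mathrm{min}}$ and $e^*$; this needs $v_r$ trivalent and $e^*$ attached to $v_r$ itself, whereas the lemma covers degenerate blocks and the definition only requires the vertex of $e^*$ to be \emph{adjacent} to $v_r$ or $v_l$. (Nothing is gained by forcing a minimal first piece anyway: $m=m_1+m_2$ and $l=l_1+l_2-2$ hold for any severed bounded edge.) Second, your extremality step rests on ``$v'$ is adjacent to $v_*$''. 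Since $v_*$ is the far endpoint of the \emph{first} interior--interior edge on the path from $v_r$ to $v'$, it is generically not adjacent to $v'$ --- this is the typical case, not a degenerate one --- so the new half-edge of $\Gamma^{(2)}$ need not sit at (or next to) the right-most interior vertex of $\Gamma^{(2)}$. The suggested repair of pushing the cut toward $v'$ merely transfers the difficulty: depending on the direction of the severed edge, the ray violating condition (iv) of Definition \ref{parameterized tropical coral} may land in the piece already containing $e^*$, leaving that piece with two positive edges exempted from (iv), or with a relaxed edge at a non-extremal vertex. The paper sidesteps all of this bookkeeping by selecting the cut edge through the path-between-negative-vertices and origin-avoidance criteria rather than by prescribing the shape of the pieces; to fix your write-up you would need to adopt (and verify) a selection criterion of that kind.
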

\begin{figure}
\resizebox{.9\linewidth}{!}{
\input{buke.pspdftex}}
	\caption{A tropical coral obtained by gluing two coral blocks along the labelled edges}
\label{Fig:CoralBlocks}
\end{figure}
Now we are ready to prove the following main theorem of this section.
\begin{theorem}
\label{coral moduli space is polyhedral}
Let $(\Gamma,u)$ be a general type of tropical corals of fixed degree $\Delta$ with $l$ unbounded edges
\[ e_1,\ldots,e_l  \]
with $ \partial e_i= v_i $ for $v_i\in V(\Gamma )$ \footnote{Not all $v_i$ may be distinct, repetitions are allowed}. Let $u_i$ be the primitive integral vector in $N_\RR$ emanating from $v_i$ in the direction of $h(e_i)$. Assume $\mathcal{\T}_{(\Gamma,u)}$ is non-empty. Then 
%the map
%\begin{eqnarray}
%\ev_\infty: \mathcal{T}_{(\Gamma,u)} & \lra & \prod_{i=1}^{l-1} N_\RR/\RR\cdot u_i
%\nonumber \\
%h & \longmapsto & \big( [h(v_1)],\ldots, [h(v_{l-1})]\big)
%\nonumber
%\end{eqnarray}
%telling the affine lines containing the first $l-1$ unbounded egdes, is a global integral affine chart for $\mathcal{T}_{(\Gamma,u)}$ and 
for any sequence of indices $1\le i_1<\ldots<i_k\le l$ with $k\le l-1$ the map
\begin{eqnarray}
\ev_{i_1,\ldots,i_k}: \mathcal{\T}_{(\Gamma,u)} & \lra & \prod_{\mu=1}^{k} N_\RR/ (\RR\cdot u_{i_\mu})
\nonumber \\
h & \longmapsto & \big( [h(v_{i_1})],\ldots, [h(v_{i_k})]\big)
\end{eqnarray}
is an integral affine submersion.
\end{theorem}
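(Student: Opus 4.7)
By Proposition \ref{EmbeddingOfT}, the embedding $\Phi$ realizes $\mathcal{T}_{(\Gamma,u)}$ as an open subset of $\RR^{l-1}_{>0}$ by recording the heights $\rho(v)$, $\rho(v_1^0),\ldots,\rho(v_{l-2}^0)$ of $l-1$ designated interior vertices. Inspecting the reconstruction in that proof, each remaining vertex position is obtained by walking one step along a bounded edge via $h(v')=h(v)+t_e u_e$, with $t_e$ a linear function of the recorded heights (because $\pi_2\circ h(v^-)=1$ on the boundary and the directions $u_e$ are pinned by the type). Hence $\mathcal{T}_{(\Gamma,u)}$ is an open subset of an $(l-1)$-dimensional integral affine space, and $\ev_{i_1,\ldots,i_k}$ is integral affine since each $[h(v_{i_\mu})]\in N_\RR/\RR u_{i_\mu}$ depends linearly on these coordinates. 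The map $\ev_{i_1,\ldots,i_k}$ for $k<l-1$ factors as $\ev_{1,\ldots,l-1}$ followed by a coordinate projection, so it suffices to treat $k=l-1$, where source and target are equidimensional and surjectivity of the derivative is equivalent to its injectivity.

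A tangent vector $\delta h$ assigns displacements $\delta h(v)\in\RR^2$ to every vertex, subject to $\delta h(v^-)=0$ at each negative vertex (since $h(v^-)$ is rigidly determined by $u_{v^-}$ together with $\partial\trcone\RR$) and $\delta h(v')-\delta h(v)\in\RR\cdot u_e$ along each bounded edge $e=\{v,v'\}$. A kernel element of $d\ev_{1,\ldots,l-1}$ additionally satisfies $\delta h(v_\mu)\in\RR\cdot u_\mu$ for $\mu=1,\ldots,l-1$. I plan to show the kernel is trivial by propagating the vanishing condition along the tree $\Gamma$: starting from each negative vertex (where $\delta h$ vanishes) and traversing bounded edges toward the labelled vertices, every step leaves exactly one free parameter in the direction $\RR\cdot u_e$, while each of the $l-1$ evaluation constraints $\delta h(v_\mu)\in\RR\cdot u_\mu$ pins down exactly one such parameter. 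A count matches $l-1$ ambiguity parameters against $l-1$ constraints, and the generality of the type $(\Gamma,u)$ ensures the resulting linear system has full rank.

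I anticipate the main obstacle to be cleanly executing this propagation through the branchings of $\Gamma$ and verifying non-degeneracy in full generality. A tidier alternative, which I would use if the direct propagation becomes unwieldy, is to pass to the extended tropical curve $\widetilde{h}:\widetilde{\Gamma}\to\RR^2$ of Construction \ref{extension of a tropical coral}. Lemma \ref{ExtendBijectively} embeds $\mathcal{T}_{(\Gamma,u)}$ into the moduli of tropical curves $\mathcal{T}_{(\widetilde{\Gamma},\widetilde{u})}$, which has dimension $l+m-1$ for a trivalent tree with $l+m$ unbounded edges, as the subspace cut out by the $m$ linear conditions that the extension half-edges $e\in E^-(\widetilde{\Gamma})$ pass through the origin. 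Evaluations on the tropical-curve moduli at any $l+m-1$ unbounded edges are affine submersions by the standard result analogous to Proposition 2.1 of \cite{NS}. Combining the $m$ origin-passing conditions with the $l-1$ positive-edge evaluations exhausts all $l+m-1$ ambient directions, and the desired submersion on $\mathcal{T}_{(\Gamma,u)}$ reduces to a transversality check between these two families of affine conditions, which is guaranteed by the general-position assumption on the type.
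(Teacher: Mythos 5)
Both of your sketches stop short of the one step that carries the actual content of the theorem: the non-degeneracy of the linear system. In the first argument, matching $l-1$ free parameters against $l-1$ constraints proves nothing by itself, and the appeal to ``the generality of the type'' cannot close the gap, because generality in Definition \ref{general} is purely combinatorial (trivalent interior vertices, univalent negative vertices); it does not prevent, for instance, the propagation direction $u_e$ arriving at a labelled vertex $v_{i_\mu}$ from being parallel to $u_{i_\mu}$, which is exactly when a row of your system degenerates. So the full-rank claim is asserted rather than proved, as you yourself anticipate. The second argument has the same gap in different clothing: the ``transversality check between these two families of affine conditions'' \emph{is} the theorem, and there is no ``general-position assumption on the type'' among the hypotheses to guarantee it. Moreover the result you want to quote for tropical curves is not Proposition 2.1 of \cite{NS} (that is the finiteness of types, used in Proposition \ref{finiteness of types}); the evaluation statement is the one the paper itself invokes, namely Proposition 2.4 of \cite{NS} together with Proposition 2.14 of \cite{M}, and it must be applied with the same care.

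For comparison, the paper does not argue by a global rank count at all: it proves the statement for coral blocks by induction on the number of unbounded edges, splitting any block along a bounded edge whose affine span misses the origin (Lemma \ref{glueing of bouquets}), and realizing the evaluation map as the inclusion of the open locus where the gluing parameter is positive, followed by a submersion built on a fibred product from the inductive hypothesis; the base case $l=1$ is reduced to tropical curves via the extension and the cited results of \cite{M}, \cite{NS}. Your second route could be repaired into a genuinely different and arguably cleaner proof: extend once and for all to $\widetilde h$ (Construction \ref{extension of a tropical coral}, Lemma \ref{ExtendBijectively}), observe that $\mathcal{T}_{(\Gamma,u)}$ is an open subset of the fibre over $(0,\dots,0)$ of the evaluation at the $m$ negative ends, and invoke the curve-level statement that the \emph{joint} evaluation at those $m$ ends together with the chosen $l-1$ positive ends is an affine submersion; then no separate transversality argument is needed, since a joint submersion restricted to a fibre of some of its factors remains a submersion onto the remaining factors. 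As written, however, with that joint-submersion input replaced by a misattributed citation and the transversality delegated to an assumption the theorem does not make, the decisive step is missing.
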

\begin{proof}
We will prove the theorem for coral blocks of general type $(\Gamma_{m,l},u)$ of degree $\Delta$. Since a tropical coral is a special type of a coral block the result will follow.   
%So, we want to show that the map
%\begin{eqnarray}
%\ev_\infty: \mathcal{T}_{(\Gamma_{m,l},u)}\lra \prod_{i=1}^{l-1} N_\RR/\RR\cdot u_i
%\nonumber \\
%h\longmapsto \big([h(v_1)],\ldots, [h(v_{l-1})]\big)
%\nonumber
%\end{eqnarray}
%is a submersion where $h\in \mathcal{T}_{(\Gamma_{m,l},u)}$ is a coral block with $l$ unbounded edges $\{e_1,\ldots,e_l  \}$ such that $\partial e_i=v_i$. The proof is done by induction on $l$.
By Lemma \ref{glueing of bouquets}, any coral block  \[(h:\Gamma_{m,l}\rightarrow\RR^2)\in \mathcal{T}_{(\Gamma_{m,l},u)}\] is obtained by gluing two coral blocks $h_1\in \mathcal{\T}_{(\Gamma_{m_1,l_1},u_1)}$ and $h_2\in \mathcal{\T}_{(\Gamma_{m_2,l_2},u_2)}$
along edges $e_1\in  E^{+}(\Gamma_{m_1,l_1},u_1)$ with $ \partial e_1=v_1$ and $e_2\in  E^{+}(\Gamma_{m_2,l_2},u_2)$ with $\partial e_2=v_2$ so that 
\[ E(\Gamma_{m,l},u)=E(\Gamma_{m_1,l_1},u_1)\setminus \{e_1\} \amalg E(\Gamma_{m_2,l_2},u_2)\setminus \{e_2\} \amalg {e} \]
with $\partial e= \{v_1,v_2\}$. Let
\begin{eqnarray}
\{e_1\} \cup \{e_{i_r},\ldots, e_{i_r} ~ | ~ r \leq l_1-1 \} & \subseteq & E^{+}(\Gamma_{m_1,l_1},u_1)  
\nonumber \\
\{e_2\} \cup \{e_{i_{r+1}},\ldots, e_{i_k} ~ | ~ k-r \leq l_2-1 \} & \subseteq & E^{+}(\Gamma_{m_2,l_2},u_2)  \} 
\nonumber
\end{eqnarray}
We will use induction on $l$. For $l=1$, we need to have a unique negative vertex. Let $v$ be the negative vertex and $e\in E(\Gamma_{m,l})$ be the edge with $\partial e=v$. Extend $e$, to obtain the tropical curve $\widetilde{h}:\widetilde{\Gamma_l^m}\rightarrow \RR^2$. In this case the result follows from \cite[Proposition $2.14$]{M}, or \cite[Proposition $2.4$]{NS}. 

Assume the theorem is true for any $2 \leq l_i < l$. Let $u_i$ be the direction vector for $e_i$, that is the primitive integral vector emanating from $h_i(v_i)$ in the direction of $h_i(e_i)$ for $i=1,2$. Define the direction vectors $u_{i_\mu}$ for the unbounded edges $e_{i_\mu}$ analogously. Then, by the induction hypothesis we have submersions
\begin{eqnarray}
 \mathcal{T}_{(\Gamma_{m_1,l_1},u_1)} & \lra &   \prod_{\mu=1}^{r} N_\RR/ (\RR\cdot u_{i_\mu})
\nonumber \\
h_1 &\longmapsto & ([h_1(v_{i_1})],\ldots, [h_1(v_{i_{r}})] )
\nonumber\\
 \mathcal{T}_{(\Gamma_{m_2,l_2},u_2)} & \lra &  N_\RR/\RR\cdot u_{2}  \times \prod_{\mu=r+1}^{k} N_\RR/ (\RR\cdot u_{i_\mu})
\nonumber \\
h_2 &\longmapsto & \big([h_2(v_2)],([h_2(v_{i_{r+1}})],\ldots, [h_2(v_{i_{k}})] ) \big)
\nonumber
\end{eqnarray}
Hence, we obtain a submersion
\begin{eqnarray}
\mathcal{F}  : \mathcal{\T}_{(\Gamma_{m_1,l_1},u_1)}\times_{N_\RR / (\RR \cdot u)}  \mathcal{\T}_{(\Gamma_{m_2,l_2},u_2)} & \lra &  \prod_{\mu=1}^{r} N_\RR/ (\RR\cdot u_{i_\mu}) \times \prod_{\mu=r+1}^{k-1} N_\RR/ (\RR\cdot u_{i_\mu}) 
\nonumber \\
(h_1,h_2) & \longmapsto & \big( [h_1(v_{i_1})],\ldots, [h_1(v_{i_{r}})],[h_2(v_{i_{r+1}})],\ldots, [h_2(v_{i_{k-1}})] \big)
\nonumber
\end{eqnarray}
where
\[ N_{\RR}/ (\RR \cdot u) := N_{\RR}/ (\RR \cdot u_1 )=N_{\RR}/ (\RR \cdot u_2) \]
and the fibered coproduct is defined via the morphisms $f_i$ in Construction \ref{constr:gluing}. Define
\begin{eqnarray}
 \mathcal{G} :   \mathcal{\T}_{(\Gamma_{m_1,l_1},u_1)}\times_{N_\RR / \RR u }  \mathcal{\T}_{(\Gamma_{m_2,l_2},u_2)} & \lra & \RR
 \nonumber \\
 (h_1,h_2) & \lra & \lambda 
\nonumber
\end{eqnarray}
where $\lambda \in \RR$ is defined by 
\[ h(v_2)-h(v_2)=\lambda \cdot u_{e_1} \]
Then, by the construction of gluing of coral blocks we obtain 
\begin{equation}
\label{PositiveDistance}
 \mathcal{\T}_{(\Gamma_{m,l},u)} = \mathcal{G}^{-1}(\RR_{> 0}) 
\end{equation}
Hence, the inclusion $ \mathcal{G}^{-1}(\RR_{> 0}) \subset  \mathcal{\T}_{(\Gamma_{m_1,l_1},u_1)}\times_{N_\RR / \RR u}  \mathcal{\T}_{(\Gamma_{m_2,l_2},u_2)}$ followed by the submersion $\mathcal{F}$ gives the desired submersion $\ev_{i_1,\ldots,i_k}$.
%Note that there is an equivariant $\RR$ action given by
%\[ \lambda(h_1,h_2)=(h_1+\lambda h_2)~ \mathrm{on}~  \mathcal{\T}_{(\Gamma_{m_1,l_1},u_1)}\times_{N_\RR /\RR u} \mathcal{\T}_{(\Gamma_{m_2,l_2},u_2)}  \]
%and 
%by $\lambda(m)=\lambda+m$ on the image $\RR$. By construction of the glueing, it follows that
%\begin{equation}
%\label{PositiveDistance}
%\mathcal{T}_{m,l},u = \phi_{12}^{-1}(\RR_{>0})
%\end{equation}
\end{proof}

\begin{corollary}
\label{dimension count for corals}
The set $\T_{(\Gamma,u)}$ of isomorphism classes of general tropical corals of a given type $(\Gamma,u)$ forms the interior of a convex polyhedron of dimension $l-1$ where $l$ is the number of unbounded edges of $\Gamma$. 
%for $\Gamma$ general and higher codimension for $\Gamma$ degenerate.
\end{corollary}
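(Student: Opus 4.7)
The plan is to combine Proposition \ref{EmbeddingOfT} with Theorem \ref{coral moduli space is polyhedral}. First, the embedding $\Phi$ of Proposition \ref{EmbeddingOfT} realizes $\T_{(\Gamma,u)}$ as a subset of $\RR^{l-1}_{>0}$, giving the upper bound $\dim \T_{(\Gamma,u)} \leq l-1$. For the matching lower bound, I would apply Theorem \ref{coral moduli space is polyhedral} with $k=l-1$ and any choice of $l-1$ unbounded edges $e_{i_1},\ldots,e_{i_{l-1}}$. Since $N_\RR = \RR^2$, each quotient $N_\RR / \RR \cdot u_{i_\mu}$ is one-dimensional, so the target of $\ev_{i_1,\ldots,i_{l-1}}$ has dimension $l-1$; the fact that the evaluation map is a submersion then forces $\dim \T_{(\Gamma,u)} \geq l-1$, and equality follows.

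For polyhedrality and convexity, I would trace the defining conditions in Definition \ref{parameterized tropical coral} through the embedding $\Phi$. Once the type $(\Gamma,u)$ is fixed, the positions $h(v)$ for $v \in V^-(\Gamma)$ are rigidly determined by the degree, namely they lie on $\partial \trcone \RR$ along the direction $u_v$ emanating toward the origin, and the direction of every edge is prescribed by $u$. The inductive reconstruction in the proof of Proposition \ref{EmbeddingOfT} then expresses each interior vertex position as an integer-affine function of the free heights $(\rho(v), \rho(v_1^0), \ldots, \rho(v_{l-2}^0))$, and similarly every bounded edge length is an integer-affine function of these coordinates. The remaining conditions, namely $\pr_2(h(v')) > 1$ for every interior vertex $v'$ together with positivity of every bounded edge length, then become finitely many open affine linear inequalities in $\RR^{l-1}$. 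These cut out the interior of a convex polyhedron.

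The main point to verify carefully is that this list exhausts the constraints, and in particular that the balancing conditions at interior vertices and the matching of edge directions do not impose further relations once the type is fixed. This follows because both the direction and the weight of every edge are already encoded in $(\Gamma, u)$, so that the reconstruction automatically respects balancing wherever vertex positions are assigned. Pulling the polyhedral description back along $\Phi$ then endows $\T_{(\Gamma,u)}$ itself with the claimed structure, and the integral affine structure is preserved because every substitution in the inductive procedure is integer-linear.
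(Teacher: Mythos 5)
Your proposal is correct, and while the dimension count runs along the same lines as the paper (both use the evaluation map of Theorem \ref{coral moduli space is polyhedral} with $k=l-1$, with the embedding of Proposition \ref{EmbeddingOfT} supplying the upper bound), your route to convexity and polyhedrality is genuinely different. The paper proves the corollary by induction through the coral-block gluing: it invokes the description $\mathcal{\T}_{(\Gamma_{m,l},u)}=\mathcal{G}^{-1}(\RR_{>0})$ of equation \eqref{PositiveDistance} inside the fibred product of two smaller block moduli spaces, gets convexity from the induction hypothesis, and then specializes from blocks to corals. You instead work directly in the chart $\Phi$: once the type pins down the negative-vertex positions and all edge directions, the reconstruction of Proposition \ref{EmbeddingOfT} expresses every vertex position as an affine function of the $l-1$ height coordinates, and the residual conditions of Definition \ref{parameterized tropical coral} (interior vertices at height strictly greater than $1$, strictly positive traversal parameter along each bounded edge in its prescribed direction) become finitely many strict affine inequalities in $\RR^{l-1}$. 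What your approach buys is self-containedness and a slightly stronger statement: the image of $\Phi$ is an open subset of $\RR^{l-1}$ cut out by strict affine inequalities, so nonemptiness alone already forces dimension $l-1$, and the submersion is not strictly needed for this corollary (it is of course still needed for Remark \ref{no dependence}). What the paper's approach buys is that convexity comes for free from the same induction that produces the submersion, with no need to re-examine the defining conditions. Two points you should make explicit to complete your version: first, surjectivity of $\Phi$ onto the inequality locus, i.e.\ that every admissible tuple of heights reconstructs an honest coral of type $(\Gamma,u)$ (this is exactly the reconstruction you sketch, and since $\Gamma$ is a tree no hidden compatibility arises, but it is the step that upgrades your inequalities from necessary to defining); second, you inherit the paper's standing assumptions, namely nonemptiness of $\T_{(\Gamma,u)}$ as in Theorem \ref{coral moduli space is polyhedral} and the silent treatment of the rationality condition (v) of Definition \ref{parameterized tropical coral}, which strictly speaking would cut the open polyhedron down to its rational points in both your argument and the paper's.
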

\begin{proof}
We will first prove the result for the set $\T_{(\Gamma_{m,l},u)}$ of isomorphism classes of general coral blocks of a given type $(\Gamma_{m,l},u)$. The fact that $\T_{(\Gamma_{m,l},u)}$ is a convex polytope follows by the Equation \eqref{PositiveDistance} and the induction hypothesis. By Theorem \ref{coral moduli space is polyhedral} we obtain a submersion  
\[ \mathcal{F}|_{\mathcal{G}^{-1}(\RR_{>0})}: \mathcal{T}_{(\Gamma,u)}  \lra  \prod_{i=1}^{k} N_\RR/ (\RR\cdot u_i)
\]
where $1\leq k \leq l-1$. Hence, for $k=l-1$ the result follows. Since tropical coral is a special type of a coral block in which all unbounded edges have direction vectors satisfying condition (iv) in Definition \ref{parameterized tropical coral} the result follows.  
\end{proof}
\begin{remark}
\label{no dependence}
From Theorem \ref{coral moduli space is polyhedral} and Corollary \ref{dimension count for corals} it follows that there is no linear dependence among general asymptotic constraints. Hence, if 
the type $(\Gamma,u)$ is of degree $\Delta$
and $\lambda$ 
is a general asymptotic constraint for $\Delta$, then the set 
$\T_{(\Gamma,u)}(\lambda)$ of labelled tropical corals of type $(\Gamma,u)$ matching $(\Delta,\lambda)$
is either empty of a cardinality $1$.
%if there exist a general constraint $\lambda$ for the degree of $(\Gamma,u)$ such that the set $\T_{(\Gamma,u)}(\lambda)$ of tropical corals of a type $(\Gamma,u)$ matching $\lambda$ is non-empty then \[ |\T_{(\Gamma,u)}(\lambda)| = 1 \] 
\end{remark}

\subsection{The count of tropical corals}
\label{The count of tropical corals}
In this section we define the count of tropical corals matching 
$(\Delta,\lambda)$, where $\Delta$ is a degree as in Definition \ref{degree} and 
$\lambda$ is a general asymptotic constraint for $\Delta$ as in Definition \ref{asymptotic constraint}.
%First, note that there are only finitely many types of tropical corals of degree $\Delta$.
\begin{proposition}\label{finiteness of types}
There are only finitely many types of tropical corals of a fixed degree $\Delta$.
\end{proposition}
\begin{proof}
By \S \ref{Extending a tropical coral to a tropical curve}, every tropical coral $h:\Gamma \to \trcone \RR$ uniquely extends to a tropical curve $\widetilde{h}:\widetilde{\Gamma}\rightarrow \RR^2$ in the sense of \cite[\S1]{NS}.
%by extension of all the edges of $\Gamma$ adjacent to negative vertices mapping to the boundary of the truncated cone, to unbounded edges passing through the origin -- see \cite[\S$2.3$]{MyThesis}. 
The degree of $\widetilde{h}$
is $\Delta$ as in Definition \ref{trop_degree}. By Proposition $2.1$ in \cite{NS} there are only finitely many types of tropical curves of degree $\Delta$. Since $h$ is obtained by the restriction of $\widetilde{h}$, the result follows immediately.
\end{proof}
%The number of types of tropical corals of a fixed degree can be obtained analogously to the case of tropical curves \cite[\S4]{M} \S $4$, \cite[Remark $2.2$]{NS}.

\begin{definition}
\label{GoodConstraint}
For $\Delta$ be a degree with 
$\ell+1$ positive entries.
We denote by $\mathcal{C}_{\Delta}
\subset N_\RR$ the cone spanned by 
$\overline{\Delta}^0,\dots,\overline{\Delta}^\ell$.
Let 
$\lambda=(\lambda_1,\dots,\lambda_\ell)$
be an asymptotic constraint for $\Delta$.
We say that $\lambda$ is \emph{good} if for every $1 \leq i\leq \ell$, 
\begin{equation}
\label{eq_good}
\lambda_i\in \Int (\mathcal{C}_{\Delta}/ (\RR \cdot \overline{\Delta}^i))\subset N_{\RR}/ (\RR \cdot \overline{\Delta}^i) \,.
\end{equation}
%Let $\Delta=(\overline{\Delta},\underline{\Delta})$ be the degree of type $(\Gamma,u)$ of a coral graph and
%\[ E^+(\Gamma)=\{e_1,\ldots,e_{k+1} \}  \]be the set of positive edges of $\Gamma$ with $\partial e_i=v_i$ and $u((v_i,e_i))=u_i$ for $i=1,\ldots,k+1$.  Define the cone
%\[ \mathcal{C}_{\Delta}:=  \{ a_1 u_1 + \ldots + a_k u_k +a_{k+1} u_{k+1}~|~ a_1 ,\ldots, a_{k+1} \in \RR_{\geq 0} \} \subset N_{\RR} \]
%so that the image of $\mathcal{C}_{\Delta}$ in $N_\RR / \RR \cdot u_i \cong \RR$ is a half-space for $u_i \in \partial \mathcal{C}_{\Delta}$ and otherwise it is all of $N_\RR / \RR u_i$. Then,
%\[\lambda=(\lambda_1,\ldots,\lambda_k) \in \prod_{i=1}^k N_{\RR}/ (\RR \cdot u_i) \]
%is called a \emph{good constraint for $\Delta$} if in the former case 
%\[\lambda_i\in \Int (\mathcal{C}_{\Delta}/ (\RR \cdot u_i))\subset N_{\RR}/ (\RR \cdot u_i) \] 
\end{definition}
\begin{remark}
If $\overline{\Delta}^i \in \Int(\mathcal{C}_\Delta)$, then 
$\mathcal{C}_{\Delta}/ (\RR \cdot \overline{\Delta}^i)=N_\RR/(\RR\cdot \overline{\Delta}^i)$ and so 
\eqref{eq_good} is automatically satisfied. But if
$\overline{\Delta}^i \in \partial (\mathcal{C}_\Delta)$, 
then $\mathcal{C}_{\Delta}/ (\RR \cdot \overline{\Delta}^i)$
is a half-space in $N_\RR/(\RR\cdot 
\overline{\Delta}^i)$ and so \eqref{eq_good} is a non-trivial condition.
\end{remark}

\begin{remark}
\label{good general exists}
For a given degree $\Delta$, 
general asymptotic constraints exist by Remark \ref{existence of general constraints}. Since, good constraints form an open set inside the set of constraints, the existence of general and good asymptotic constraints for 
$\Delta$ follows.
\end{remark}
%We describe how to rescale a tropical coral in a moment. 
Let $(\Gamma,u)$ be a type of degree $\Delta$ and $\lambda$ an asymptotic constraint for $\Delta$. Even if we assume that the set of tropical corals of type $(\Gamma,u)$ matching
$(\Delta,\lambda)$ is empty, after rescaling 
$\lambda$ by $s\in \QQ_{\geq 1}$,
it is possible to obtain a non-empty set of tropical corals of type $(\Gamma,u)$ matching $(\Delta,\lambda)$
as illustrated in Figure \ref{fig:rescale}.
\begin{figure}
	 \resizebox{.9\linewidth}{!}{\input{Rescale.pspdftex}}
\label{fig:rescale}
\caption{Tropical corals appearing after rescaling of asymptotic constraints}
\end{figure}
To obtain $\lambda$-independent counts of tropical corals,
we want to choose our constraints such that we also avoid the possibility of obtaining new tropical corals after rescaling.
This is done by the notion of stable range of constraints introduced in Definition \ref{stable range}.
%We define a stable range for good general constraints where the tropical count is well-defined, and independent of such rescalings.
\begin{lemma}
\label{stability}
Let $(\Gamma,u)$ be a type of degree $\Delta$
and $\lambda$ a general asymptotic constraint for $\Delta$.
Then one of the following holds.
\begin{itemize}
\item[(i)] $\forall s \in \QQ_{\geq 1}$, $\T_{(\Gamma,u)}(s \cdot \lambda)= \emptyset$.
\item[(ii)] $\exists s_0 \in \QQ_{\geq 1}$ such that $\forall s \geq s_0$, $|\T_{(\Gamma,u)}(s \cdot \lambda)|= 1$.
\end{itemize}
\end{lemma}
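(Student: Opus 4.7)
My plan is to construct a homogeneous scaling operation on tropical corals: for $s \geq 1$ and $h \in \T_{(\Gamma,u)}$ I will define $h_s$ by multiplying all interior vertex positions by $s$ while leaving negative vertex positions fixed, and show that $h_s$ is again a tropical coral of the same type $(\Gamma,u)$ matching the scaled constraint $s\cdot \ev(h)$. Combined with the injectivity of the evaluation map $\ev\colon \T_{(\Gamma,u)} \to \prod_{i=1}^{l-1} N_\RR/\RR\cdot u_i$, which follows from Theorem \ref{coral moduli space is polyhedral} and Corollary \ref{dimension count for corals} (it is an integral affine submersion between spaces of the same dimension $l-1$, hence globally an affine isomorphism onto its image), this immediately yields the dichotomy: if $\T_{(\Gamma,u)}(s\lambda) = \emptyset$ for all $s \geq 1$ we are in case (i); otherwise, picking any $s_1 \geq 1$ with $\T_{(\Gamma,u)}(s_1\lambda) \neq \emptyset$ and rescaling the unique element by the factor $s/s_1 \geq 1$ for each $s \geq s_1$ places us in case (ii) with $s_0 = s_1$.

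The heart of the argument is the verification of the scaling claim. Balancing conditions depend only on primitive direction vectors and edge weights, so it suffices to check that direction vectors are preserved. For an edge between two interior vertices, $h_s(v_2^0) - h_s(v_1^0) = s(h(v_2^0) - h(v_1^0))$ is a positive multiple of the original edge vector, so its primitive representative is unchanged; for unbounded positive edges, the direction vectors are part of the type data and intrinsic. The subtle case is an edge joining an interior vertex $v^0$ to a univalent negative vertex $v^-$, and here I would use the following structural observation: the balancing condition at $v^-$ forces its unique adjacent edge to have direction $-u_v$ from $v^-$, hence $h(v^0) = h(v^-) - t\,u_v$ for some $t > 0$; combined with $h(v^-) = -\alpha\,u_v$ for some $\alpha > 0$ (coming from the definition of $u_v$ as the primitive vector from $h(v^-)$ toward the origin), this forces $h(v^0) = -(\alpha+t)u_v$ to lie on the radial line $\RR\cdot u_v$ through the origin. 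A direct calculation then gives
\[
h(v^-) - s\cdot h(v^0) = \bigl((s-1)\alpha + st\bigr)\,u_v,
\]
which is a positive multiple of $u_v$ for every $s \geq 1$, so the primitive direction of this edge in $h_s$ is again $u_v$, as required.

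The remaining conditions of Definition \ref{parameterized tropical coral} are straightforward: the heights $\rho(h_s(v^0)) = s\cdot \rho(h(v^0)) > s \geq 1$ stay strictly above the boundary of $\trcone\RR$; the matching equation $\ev(h_s) = s\cdot \ev(h)$ follows from the linearity of $\ev_i(h) = [h(v_i)]$ in the interior vertex positions; and the properness condition on unbounded positive edges is preserved because direction vectors are unchanged. The main obstacle I anticipate is isolating the radial observation cleanly and invoking the generality hypothesis on $\lambda$ to rule out exotic configurations, in particular interior vertices that would be forced to sit on two non-parallel radial lines through the origin (which would collapse them to the origin, contradicting the height constraint). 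Once this is settled, the scaling construction is valid for every $s \geq 1$, uniqueness within $\T_{(\Gamma,u)}(s\lambda)$ follows from the injectivity of $\ev$, and the dichotomy of the lemma is a formal consequence.
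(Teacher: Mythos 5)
Your proposal is correct and follows essentially the same route as the paper: the paper's proof is exactly the rescaling argument (``the rescaled coral $s\cdot h$ matches $s\cdot\lambda$ and has the same type''), combined with the uniqueness statement of Remark \ref{no dependence}. You simply make the rescaling precise (fixing negative vertices and scaling interior ones, justified by the radiality of edges at univalent negative vertices, so the image stays in $\trcone\RR$ with negative vertices on $\partial\trcone\RR$) and obtain uniqueness from the affine injectivity of $\ev$ instead of the paper's bare assertion that $s\cdot\lambda$ remains general, which is a welcome sharpening but not a different approach.
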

\begin{proof}
By Remark \ref{no dependence}, for all 
$s \in \QQ_{\geq 1}$, we have either $\T_{(\Gamma,u)}(s \cdot \lambda)= \emptyset$ or $|\T_{(\Gamma,u)}(s \cdot \lambda)|= 1$.
If there exists a tropical coral $h\in {\mathcal{T}}_{(\Gamma,u)}$ 
matching a general constraint $\lambda$, then the rescaled coral $s\cdot h$ with 
$s\geq 1$ matches $s \cdot \lambda$. This operation clearly does not change the type. Moreover, since $\lambda$ is general for $h$, $s \cdot \lambda$ is also general for $s\cdot h$. Hence, the result follows.
%if there exists there is only one tropical coral $h$ matching $s\cdot lambda$ and this number remains invariant by further rescaling of $h$.  
\end{proof}
\begin{definition}
\label{stable range}
Let $\Delta$ be a degree.
We define the \emph{stable range of constraint $\mathcal{S}_\Delta$} as the set of asymptotic constraints $\lambda$ for $\Delta$ such that:
\begin{itemize}
\item[(i)] $\lambda$ is a good general asymptotic constraint for $\Delta$.
\item[(ii)] For every type $(\Gamma,u)$ of degree $\Delta$, if $\T_{(\Gamma,u)}(\lambda)= \emptyset$, then $\T_{(\Gamma,u)}(s\cdot \lambda)= \emptyset$, $\forall s\in \QQ_{\geq 1}$.
\end{itemize}
\end{definition}

\begin{lemma}
\label{stable range non-empty}
For every degree $\Delta$, the stable range of constraints
$\mathcal{S}_{\Delta}$ is non-empty.
\end{lemma}

\begin{proof}
By Remark \ref{good general exists}, the set of good general asymptotic constraints for $\Delta$ is not empty. Fix 
a good general asymptotic constraint $\lambda$
for $\Delta$. 
By Proposition \ref{finiteness of types}, there are 
finitely many types of tropical corals $(\Gamma_1,u_1),\cdots,(\Gamma_n,u_n)$ 
of degree $\Delta$. 
By Lemma \ref{stability}, for each type  
$(\Gamma_i,u_i)$, either $\T_{(\Gamma,u)}( s \cdot \lambda)= \emptyset$ for all $s \in \QQ_{\geq 1}$ and in this case we set $s_i :=1$, or
there exists $s_i \in \QQ_{\geq 1}$ such that for every $s \geq s_i$, $|\T_{(\Gamma,u)}(s \cdot \lambda)|= 1$.
By construction, if we consider the maximum \[s_0=\mathrm{max}\{ s_i~|~i=1,\ldots,n \}\,,\]
we have $s_0 \cdot \lambda \in \mathcal{S}_{\Delta}$.
\end{proof}

\begin{definition}
\label{def mult}
Let $h: \Gamma \rightarrow \overline{C}\RR$ be a general tropical coral as in Definition \ref{def_general}; in particular all interior vertices of $\Gamma$ are trivalent. 
The \emph{multiplicity} of $h: \Gamma \rightarrow \overline{C}\RR$ is defined as in Definition 2.16,\cite{M} by
\begin{equation}\label{eq_mult}
\mathrm{Mult}(\Gamma, h):= \prod_{v\in V^0(\Gamma)}  \mathrm{Mult}(v) \,,     \end{equation}
where the product is over interior vertices, and for every
interior vertex $v \in V^0(\Gamma)$, choosing two arbitrary edges $e_1$, $e_2$ adjacent to $v$, denoting $u_1$,$u_2$ the primitive integral vectors emanating from $h(v)$ in the direction of $h(e_1)$ and $h(e_2)$, and $w_\Gamma(e_1)$, $w_\Gamma(e_2)$ the weights of $e_1$ and $e_2$,
\begin{equation}
\label{mult_vertex}\mathrm{Mult}(v):= w_\Gamma(e_1)\cdot w_\Gamma(e_2) \cdot |\mathrm{det}(u_1,u_2)|\,. 
\end{equation}
\end{definition}

\begin{definition}
\label{def mult labelled}
If $h: \Gamma \rightarrow \overline{C}\RR$
is a tropical coral with $\ell+1$ unbounded edges 
$e_0^+,\dots,e_\ell^+$ 
and $m$ negative vertices
$v_1^-,\dots,v_m^-$, we
define the \emph{modified multiplicity} of $h: \Gamma \rightarrow \overline{C}\RR$ by 
\begin{align}\label{eq_mult_labelled}
\widetilde{\mathrm{Mult}}(\Gamma,h)&:= \frac{1}{
\prod_{i=1}^\ell w_\Gamma(e_i^+)} \frac{1}{\prod_{j=1}^m w_{v_j^-}}
\mathrm{Mult}(\Gamma,h)
\\&= \frac{1}{
\prod_{i=1}^\ell w_\Gamma(e_i^+)} \frac{1}{\prod_{j=1}^m w_{v_j^-}}
\prod_{v\in V^0(\Gamma)}  \mathrm{Mult}(v)\,, \nonumber    \end{align}
where $w_\Gamma(e_i^+)$ are the weights of the positive edges
$e_1^+,\dots,e_\ell^+$
(all of them except $e_0^+$)
and 
$w_{v_j^-}$ are the weights on the negative vertices $v_j^-$. 
%$v^- \in V^-(\Gamma)$(which are univalent by Definition \ref{def_general} of a general tropical coral).
\end{definition}

%\begin{definition}\label{def mult labelled}Let $(\Gamma, \mathbb{E},h)$ be a general labelled tropical coral, where $\mathbb{E}=(e_1,\dots,e_\ell)$. The \emph{multiplicity} of $(\Gamma, \mathbb{E},h)$is defined by \begin{align}\label{eq_mult_labelled}\mathrm{Mult}(\Gamma, \mathbb{E}, h)&:= \frac{1}{\prod_{i=1}^\ell w_\Gamma(e_i)} \frac{1}{\prod_{v^- \in V^-(\Gamma)} w_\Gamma(e_{v^-})}\mathrm{Mult}(\Gamma,h)\\&= \frac{1}{\prod_{i=1}^\ell w_\Gamma(e_i)} \frac{1}{\prod_{v^- \in V^-(\Gamma)} w_\Gamma(e_{v^-})}\prod_{v\in V^0(\Gamma)}  \mathrm{Mult}(v)\,, \nonumber    \end{align}where $w_\Gamma(e_i)$ are the weights of the labelled edges and $w_\Gamma(e_{v^-})$ are the weights of the edges adjacent to the negative vertices $v^- \in V^-(\Gamma)$(which are univalent by Definition \ref{def_general} of a general tropical coral).\end{definition}

We are ready to define the count of tropical corals. 
Let  $\Delta$
be a degree and $\lambda$
an asymptotic constraint in the stable range 
$\mathcal{S}_\Delta$.
By Proposition \ref{finiteness of types} and 
Remark \ref{no dependence}, there are finitely many 
tropical corals $h_i: \Gamma_i \rightarrow \overline{C}\RR$, 
$1 \leq i\leq n$,  matching $(\Delta,\lambda)$. They are all general, as $\lambda$ is general by Definition \ref{stable range} of $\mathcal{S}_\Delta$, and so they all have a modified multiplicity 
defined by Definition \ref{def mult labelled}.
%Let $ V(\Gamma_i) \setminus V^{-}(\Gamma_i)$ be the set of all non-negative vertices of $\Gamma_i$, so that each $v\in V(\Gamma_i)$ is trivalent. For each $v \in V(\Gamma_i) \setminus V^{-}(\Gamma_i)$ define the multiplicity at $v$ as follows. Choose two arbitrary edges $e_1$ and $e_2$ adjacent to $v$. Let $u_1$ and $u_2$ denote the primitive integral vectors emanating from $v$ in the direction of $e_1$ and $e_2$ respectively. Let $w_1,w_2\in \NN\setminus \{0\}$ be the associated weights to $e_1,e_2$. Then we define the multiplicity of $v$ as \[\mathrm{Mult(v)}:= w_1\cdot w_2 \cdot |\mathrm{det}(u_1,u_2)| \] 
Then, the tropical count of tropical corals is defined by
\begin{equation}
\label{Ntrop}
N^{\mathrm{trop}}_{\Delta,\lambda}:= \sum_{i=1}^n \widetilde{\mathrm{Mult}}(\Gamma_i, h_i)\,.
\end{equation}    
%where $d_{ij}$'s are the weights of the positive edges of $\Gamma_i$ and $e_{ik}$'s are the weights of the edges adjacent to negative vertices of $\Gamma_i$ (which are univalent by Definition \ref{def_general} of a general coral). 

\begin{remark}
Note that in the definition \eqref{eq_mult_labelled} of the modified multiplicities $\widetilde{\mathrm{Mult}}(\Gamma, h)$, we have the factors given by the weights $w_\Gamma(e_i^+)$ and $w_{v_j^-}$. The reason to divide out the multiplicities with these weights in  \eqref{Ntrop} is that in the next sections of this article the constraints we will be imposing on the stable maps will be on the boundary divisor, rather than in the dense torus orbit of the Tate curve. This is the same set-up as in \cite{GPS}, and tropically amounts to modifying the usual Mikhalkin multiplicity, by dividing out the weights on labelled unbounded edges as explained in \cite[Appendix]{GPS}. 
\end{remark}

The notion of good constraint is ultimately justified by the following key Lemma, which allows us to reduce the study of tropical corals to the study of tropical curves. 
\begin{lemma}
\label{curves are extensions}
Let $\Delta$ be a degree and $\lambda$ a good general asymptotic constraint for $\Delta$. Then
every tropical curve $\widetilde{h}:\widetilde{\Gamma}\to \RR^2$ 
matching $(\Delta,\lambda)$
as in Definition \ref{trop_const}
is obtained after a possible rescaling as an extension in the sense of \S \ref{Extending a tropical coral to a tropical curve} of a tropical coral $h:\Gamma\to \trcone \RR$ 
matching $(\Delta,\lambda)$.
%where $\Gamma$ has $m$ negative vertices. 
\end{lemma}
\begin{proof}
It is enough to show that the images $\widetilde{h}(V(\widetilde{\Gamma}))$ of vertices of $\widetilde{\Gamma}$ lie inside the cone $\mathcal{C}_{\Delta}$ defined in \ref{GoodConstraint}, since then either
\[\widetilde{h}(V(\widetilde{\Gamma})) \subset \trcone \RR\cap \mathcal{C}_{\Delta}\] 
and the restriction of $\widetilde{h}$ is already a tropical coral, or by rescaling $\widetilde{h}$ with some $s\in \RR_{\geq 1}$, we can ensure all vertices will be in $\trcone \RR\cap \mathcal{C}_{\Delta}$. 

Now assume there exist a vertex $v$ of $\widetilde{\Gamma}$ such that $h(v)\notin \mathcal{C}_{\Delta}$. Then it follows that there exists at least one unbounded edge $e$ of $\widetilde{\Gamma}$ with direction vector $u_e$ emanating from $h(v)$ in the direction of $h(e)$ with $u_e\notin \mathcal{C}_{\Delta}$. If $v$ is the only vertex of $\widetilde{\Gamma}$ with $h(v)$ not included in $\mathcal{C}_{\Delta}$ then this is obvious by the balancing condition at $v$. If not, then take a longest path from $v$ to a vertex $v'$ of $\widetilde{\Gamma}$ such that $h(v')\notin \mathcal{C}_{\Delta}$, which exists since $\Gamma$ is connected. In this case $v'$ must be adjacent to an unbounded edge $e$ of $\widetilde{\Gamma}$ such that the direction vector $u_e\notin \mathcal{C}_{\Delta}$ by the balancing condition at $v'$. But the existence of such an edge $e$ contradicts that  $\widetilde{h}:\widetilde{\Gamma}\rightarrow \RR^2$ has degree $\Delta$. Hence, the result follows. 
%Note that if $\lambda$ is a general constraint for $h$, then $\widetilde{\lambda}$ is a general constraint for $\widetilde{h}$ (\eqref{e}).
\end{proof}

Let $\Delta$ be a degree with 
$\ell+1$ positive and $m$ negative entries, 
and $\lambda$
an asymptotic constraint in the stable range 
$\mathcal{S}_{\Delta}$.
%By \S \ref{Extending a tropical coral to a tropical curve}, we have extensions of $\tilde{\Delta}$ and $(\mathfrak{u},\lambda)$ into a tropical degree $\tilde{\Delta}$ and a general tropical constraint $(\tilde{\mathfrak{u}},\tilde{\lambda})$. 
By \cite{M,NS}, there are finitely many general tropical curves 
$\tilde{h}_i: \tilde{\Gamma}_i \rightarrow \RR^2$ 
matching $(\Delta,\lambda)$
as in Definition \ref{trop_const}.
Recall that the unbounded edges of such tropical curve $\tilde{h}: \tilde{\Gamma} \rightarrow \RR^2$ are labelled
$\tilde{e}_0^+,\dots, \tilde{e}_\ell^+,
\tilde{e}_1^-,\dots,\tilde{e}_m^-$
and we define its 
\emph{modified multiplicity}
as in \eqref{eq_mult_labelled}:
\begin{equation}
\mathrm{Mult}(\tilde{\Gamma}, 
\tilde{h}):= \frac{1}{
\prod_{i=1}^\ell w_{\tilde{\Gamma}}(\tilde{e}_i^+)}
\frac{1}{\prod_{j=1}^m w_{\tilde{\Gamma}}(\tilde{e}_j^-)}
\prod_{v\in V(\tilde{\Gamma})}  \mathrm{Mult}(v)\,,
\end{equation}
where $w_{\tilde{\Gamma}}(e)$ are the weights of the labelled 
edges $e \in \tilde{\mathbb{E}}$, and the multiplicities $\mathrm{Mult}(v)$
of trivalent vertices are given by \eqref{mult_vertex}
Then, the tropical count of tropical curves is defined by
\begin{equation}
\label{NtropCurve}
\tilde{N}^{\mathrm{trop}}_{\Delta,\lambda}:= \sum_{i=1}^n 
\mathrm{Mult}(\tilde{\Gamma}_i, 
\tilde{h})_i\,.
\end{equation}

\begin{theorem}
\label{Independence Of Constraint}
Let  $\Delta$ be a degree and 
$\lambda$ an asymptotic constraint in the stable range 
$\mathcal{S}_{\Delta}$.
Then, the extension 
\[ (h:\Gamma \rightarrow 
\overline{C}\RR) \mapsto (\tilde{h}: \tilde{\Gamma}\rightarrow \RR^2)\]
of \S \ref{Extending a tropical coral to a tropical curve} 
defines a bijection between tropical corals 
matching $(\Delta, \lambda)$
and tropical curves matching 
$(\Delta,\lambda)$.
This bijection preserves the modified multiplicities:
\[ \widetilde{\mathrm{Mult}}(\Gamma,h) = \widetilde{\mathrm{Mult}}(\tilde{\Gamma},\tilde{h})\]
and so the count of tropical corals coincides with the count of tropical curves:
\[ N^{\mathrm{trop}}_{\Delta,\lambda}
=\tilde{N}^{\mathrm{trop}}_{\Delta, \lambda}\,.\]
Moreover, the count of tropical corals $N^{\mathrm{trop}}_{\Delta,\lambda}$ is independent of the choice of the asymptotic constraint $\lambda$ in the stable range 
$\mathcal{S}_\Delta$.
\end{theorem}
\begin{proof}
The fact that the tropical extension defines a bijection 
when the asymptotic constraint 
$\lambda$ is in the stable range 
$\mathcal{S}_{\Delta}$
follows from Lemma \ref{curves are extensions}.
Multiplicities agree by the explicit description of the tropical extension in \S
\ref{Extending a tropical coral to a tropical curve}.
Finally, the count of tropical curves $\tilde{N}^{\mathrm{trop}}_{\Delta, \lambda}$ is independent of the choice of 
$\lambda$ by \cite{GM}
and so the count of log corals $N^{\mathrm{trop}}_{\Delta,\lambda}$ is also independent of $\lambda$ as long as $\lambda$
is in the stable range $\mathcal{S}_\Delta$.
\end{proof}

\section{A tropical approach to homological mirror symmetry}
\label{sec: A tropical view on hms}
\subsection{The ring of theta functions $\hat{R}$}
\label{sec: The mirror to the Tate curve} 
The program developed by Gross and Siebert in the last twenty years, aims at an algebro-geometric approach to the Strominger--Yau--Zaslow conjecture \cite{SYZ} in mirror symmetry, and provides a technique to construct the homogeneous coordinate ring for the mirror to a family of Calabi--Yau varieties, referred to as \emph{the ring of theta functions} using tropical and log geometric techniques \cite{affinecomplex}. This construction is generalised to a large class of families in \cite{GHS}, and it is shown that the underlying vector space of this coordinate ring admits a canonical basis given in terms of theta functions -- for previous work in this direction in two dimensional cases, see also \cite{GHK}. 

In this section we investigate the product rule in the ring of theta functions, focusing attention on the mirror to the Tate curve -- for details we refer to \cite[8.4.2]{Clay}. We first fix an ample line bundle $\mathcal{L} \to X $
over the unfolded Tate curve, as in \cite[\S 8.4.1]{Clay} (where the unfolded Tate curve is denoted by $X_{\Sigma}$), which is obtained from the data of a piecewise-linear function 
\begin{eqnarray}
\label{Eq: phi the PL fnc}
\phi : \RR & \lra & \RR \\
\nonumber
x & \longmapsto & ix -
\frac{i(i+1)}{
2,}
\end{eqnarray}
which has slope $i$ on the interval $[i,i+ 1]$ for $i \in \ZZ$. The upper convex hull of the graph of $\phi$, denoted by $\Delta$ is depicted in \cite[Figure 24]{Clay}, and it is shown that the normal fan to $\Delta$ is the toric fan to the unfolded Tate curve, illustrated in Figure \ref{Fig:fan for unfolded tate curve}. A basis for the sections of $\mathcal{L}^{\otimes n}$ is represented by integral points of $n \Delta$. Moreover, there is a $\ZZ$-action on 
\[ \bigoplus_{n=0}^{\infty} H^0(X,\mathcal{L}^{\otimes n}). \]
A generator of this action, for any non-negative integer $n \geq 0$ is given by the map
\begin{equation}
\label{Eq:Z action on L}
 T_n(x,y) := (x + nb,xb + y + \frac{b(b - 1)}{
2}
n)
\end{equation}
where $b \in \ZZ_{>0}$ is fixed, so that the action of $\ZZ$ on the toric fan for the unfolded Tate curve is given by translations by $b$, as in \S\ref{The Tate curve} (in \cite[\S8.4.1]{Clay} $b$ is denoted by $d$). Taking the quotient with respect to the $\ZZ$ action obtained on $\mathcal{L}$ given by by \eqref{Eq:Z action on L}, we obtain an ample line bundle
\begin{equation}
\label{E:line bundle}
  \mathcal{\hat{L}} \to T 
  \end{equation}
on the Tate curve (in \cite{Clay}, the Tate curve $T$ is denoted by $\hat{\mathcal{X}}$). A basis for the sections of basis of sections of $H^0(T, \mathcal{\hat{L}}^{\otimes n})$ is given by the \emph{theta functions}
\begin{equation}
\label{Eq: theta functions}
\theta_{n,p} := \sum_{s= - \infty}^{s= \infty} z^{T^s_n(np,n\phi(p))} 
\end{equation}
where $T_n$ is defined as in \eqref{Eq:Z action on L}, $\phi$ is the PL function defined in \eqref{Eq: phi the PL fnc},
and $p \in B((1/n)\ZZ)$. The \emph{ring of theta functions} is a $C \lfor u \rfor$-algebra, 
\begin{equation}
\label{Eq: ring of theta functions}
\hat{R} = \bigoplus_{n=0}^{\infty} H^0 (T,\hat{\mathcal{L}}^{\otimes n})
\end{equation}
and the set of $C \lfor u \rfor$-module generators is given by 
\[ \{1\} \cup \{\theta_{n,p}~|~p \in B((1/n)\ZZ),n \geq 1\}.\]
as explained in \cite[\S8.4.1]{Clay}. The product in $\hat{R}$ is given by
\begin{equation}
\label{Eq:multiplying theta functions}
\theta_{n_1,p_1} \cdot \theta_{n_2,p_2} = \sum_{\alpha=-\infty}^{\alpha=\infty} \theta_{n_1+n_2,\frac{n_1p_1+n_2(p_2+\alpha b)}{
n_1+n_2}}u^{\mathrm{deg}(p_1,p_2+\alpha b)}
\end{equation}
where $b\in \ZZ_{>0}$ is fixed as in \S\ref{The Tate curve}, and for any $p_1 \in \frac{1}{n_1}\ZZ, p_2 \in \frac{1}{n_2}\ZZ$, 
\begin{equation}
\label{Eq: degree}
\mathrm{deg}(p_1,p_2):=  n_1\phi(p_1) + n_2\phi(p_2) - (n_1+n_2) \bigg(  \frac{n_1p_1+n_2(p_2+\alpha b)}{
n_1+n_2} \bigg).
\end{equation}
as defined in \cite[8.4.2]{Clay}. It is shown in \cite[8.4.2]{Clay}, that the multiplication in the ring of theta functions $\hat{R}$ for the Tate curve, defined in \eqref{Eq: ring of theta functions}, agrees with the Floer multiplication in the Lagrangian Floer cohomology ring associated to the elliptic curve. This is shown by working with a combinatorial analogue of the Fukaya category, introduced by Abouzaid--Gross--Siebert as \emph{tropical Morse category}, where the product is described by counts of \emph{tropical Morse trees} which correspond to holomorphic disks bounded by Lagrangian submanifolds in the elliptic curve. We describe tropical Morse trees, and show the correspondence of such trees with tropical corals in the next sections.

\subsection{A view toward symplectic cohomology}\label{ViewOnSH} 
\label{Sec: SH}
The ring of theta functions $\hat{R}$ defined as in \eqref{Eq: ring of theta functions} is proposed to be the coordinate ring to the mirror of $T\setminus T_0$, the complement of the central fiber $T_0$ in the Tate curve \cite{GHS}. The mirror to the Tate curve $T$ is then analogously obtained by a compactification, as explained in \cite[\S4]{GHS}. Note that in the analytic category there is an elliptic fibration on $T\setminus T_0$ over the punctured disc $D^*$. The mirror as proposed by Strominger--Yau--Zaslow of $T\setminus T_0$ is also a fibration over $D^*$ formed by dual elliptic curves -- for a survey of SYZ mirror symmetry see for instance \cite{G3}.
Hence, $T\setminus T_0$ is in this sense self-mirror. It is proposed by \emph{homological mirror symmetry} that the mirror to $T\setminus T_0$ is given by the $\mathrm{Spec}$ of the degree zero part of the symplectic cohomology ring of it. Note that, topologically $T\setminus T_0$ is the mapping cylinder of the Dehn twist $\tau \colon E \to E$ of an elliptic curve along a meridian
\[
  \Map(\tau) := \frac{E \times [0,1] \times \mathbb{R}}{\big(e,0,q\big) \sim \big(\tau(e), 1, q\big)}.
\]
Although symplectic cohomology is described for open manifolds with a Liouville structure \cite{Se}, and $\Map(\tau)$ does not fall into this category, we nevertheless can define an appropriate modification symplectic cohomology in this set-up. Similar descriptions of the symplectic cohomology for mapping cylinders of Hamiltonian symplectomorphisms can be found in \cite{F}. In a related context, for a study of the Floer homology of Dehn twists see also \cite{HS}.

We equip $\Map(\tau)$ with the symplectic form $\omega_E + dp \wedge dq$, where $\omega_E$ is the symplectic form on $E$ and $p$ and $q$ are the coordinates on the second and third factors of $E \times [0,1] \times \mathbb{R}$, and 
choose a tame almost complex structure on $\Map(\tau)$, such that the projection $\pi: \Map(\tau) \to \RR \times S^1$ is holomorphic. We then consider a Hamiltonian function, given by the composition of $\pi$ with the projection to the first factor, whose time $1$-periodic orbits after suitable perturbations are non-degenerate, and thus we obtain a well-defined Floer complex, which allows us to describe the symplectic cohomology ring for $T\setminus T_0$ as 
\[SH^\star\big(T\setminus T_0\big) \cong \bigoplus_{k \in \ZZ} HF^\star(\tau^k),\]
where $HF^\star$ denotes the Hamiltonian Floer cohomology ring. Details of this construction, as well as checking maximum principle for the perturbations of the Hamiltonians require a rather large set-up of a symplectic geometric framework, which beyond the scope of this paper, and will be the focus of future work.  
%the motivation of the results in this article, is based on the following result which is 
Focusing attention at the degree zero piece, we obtain an isomorphism
\begin{equation}
\label{Eq HandF}
    \bigoplus_{k \in \ZZ} HF^0(\tau^k) \cong \bigoplus_{k \in \ZZ} HF^0\big(L(0), L(k)\big), 
\end{equation}
where $HF$ on the right hand side denotes the Lagrangian Floer cohomology. Based on discussions with Abouzaid--Siebert and Pomerleano--Tonkonog, the isomorphism \eqref{Eq HandF} follows by showing both sides are isomorphic to the wrapped Floer cohomology $HW^0(L(0)\times \RR, L(0)\times \RR)$. To do this, one needs to observe that the closed open map 
\[SH^0 \cong \bigoplus_{k \in \ZZ} HF^0(\tau^k)  \longrightarrow HW^0(L(0)\times \RR, L(0)\times \RR)\] is an isomorphism analogous to \cite[Proposition 7.2]{P}. This shows that the product in the symplectic cohomology of $T\setminus T_0$, can be understood from the product in the Lagrangian Floer cohomology of the elliptic curve, which agrees with the product in the ring of theta functions by \cite[\S8.4.2]{Clay}.

Our contribution in this paper is to interpret the structure constants contributing to the Floer product for the elliptic curve, namely holomorphic polygons bounded by Lagrangians, on the algebro-geometric side by counts of certain punctured curves defined using log geometric techniques. To pass from holomorphic polygons to punctured log curves, we are using the correspondences summarised in Figure~\ref{results}. In particular, use combinatorial analogues of these holomorphic polygons, given by \emph{tropical Morse trees}, as defined in the next section.
\begin{remark}
It is worthwhile emphasizing that, comparing algebraic and symplectic virtual fundamental classes on the moduli of punctured log curves is highly technical. Generally, to relate the symplectic and algebraic formalisms there are various approaches -- see \cite{V} for instance. Thus, the isomorphism between the ring of theta functions \cite{GHS} and symplectic cohomology ring, is so far conjectural \cite{Utah}. 
\end{remark}

\subsection{Tropical Morse trees}
\label{Sec: Tropical Morse trees}
The product in the Lagrangian Floer cohomology is, roughly, determined by counting holomorphic polygons bounded by Lagrangians. Such polygons, capturing the product rule in Floer cohomology for the case of the elliptic curve $E$ correspond to tropical Morse trees on $S^1$, as shown in \cite[\S $8.4.4$]{Clay}. Tropical Morse trees, introduced by Abouzaid--Gross--Siebert, are tropical analogues of the gradient flow trees in Morse theory. In the remaining part of this section we review the definition of tropical Morse trees along with examples illustrating their correspondence to holomorphic polygons. For details we refer to \cite[\S8]{Clay}.

A \emph{rooted ribbon tree} $\mathcal{R}$ is a connected tree with a finite number of vertices and edges, with no divalent vertices, together with the additional data of a cyclic ordering of edges at each vertex and a distinguished vertex referred to as the \emph{root vertex}.  We call the univalent vertices of $\mathcal{R}$ \emph{external} and the other vertices \emph{internal}, and insist that the root is an external vertex.  We orient all edges towards the root vertex, referring to the root vertex as the \emph{outgoing vertex} and all other external vertices as \emph{incoming vertices} or \emph{leaves}. We refer to edges adjacent to a vertex $v$ as \emph{incoming edges} if the assigned direction points towards $v$ and \emph{outgoing edges} otherwise.
%The cyclic ordering on the ribbon tree $\mathcal{R}$ specifies an embedding $\mathcal{R}\hookrightarrow \RR^2$ up to isotopy. This given a Ribbon tree with $n$ external vertices we obtain a decomposition of $\RR^2$ into $n$ regions.
Let $\mathcal{R}$ be a rooted ribbon tree with $d+1$ external vertices, for $d>0$.  There is a unique isotopy class of embeddings of $\mathcal{R}$ into the unit disc $D \subset \RR^2$ such that each external vertex maps to $S^1 \subset D$.  This embedding divides $D$ into $d+1$ regions, each of which meets the boundary $S^1$ in a segment.  We label the regions by $0,\ldots,d$, proceeding anticlockwise around $S^1$ from the root vertex, and assign distinct integers $n_0,\ldots,n_d$ to these regions. We then also assign to each edge~$e_{i,j}$, separating regions labelled by $i$ and $j$, the integer 
\begin{equation}
\label{Eq: acceleration}
 n_{e_{i,j}} := \begin{cases} 
     n_j-n_i & \mathrm{on}~ e_{ij} ~ \mathrm{for} ~ 0 < i,j < d  \\
      n_d - n_0 & \mathrm{on}~ e_{d,0}
   \end{cases}
\end{equation}
called the \emph{acceleration of $e_{i,j}$}.
%, where $e$ lies between regions $i$ and $j$. 
A rooted ribbon tree whose edges are labelled with accelerations is called \emph{decorated}. We also label each external vertex of a decorated ribbon tree by $v_{[i][j]}$, where the unique edge incident to the vertex lies between regions $i$ and $j$, and the square brackets indicate that we take the indices $i$ and $j$ modulo $d+1$.  Note that the vertex labels are $v_{0,1},v_{1,2},\ldots,v_{d-1,d},v_{d,0}$.

Now we are ready to define tropical Morse trees as maps from $\mathcal{R} \to S^1$. Note that we will denote the points on $S^1 = \mathbb{R}/d\mathbb{Z}$ with coordinates in $\frac{1}{n}\mathbb{Z}$ by $ S^1\big(\frac{1}{n}\ZZ\big)$ as in \cite[\S $8$]{Clay}.
\begin{definition}
\label{tropicalMorsetree}
Let $\mathcal{R}$ be a rooted ribbon tree with $d+1$ external vertices, $d>0$, and $n_0,\ldots,n_d \in \ZZ$ be the set of integers describing a decoration on $\mathcal{R}$. Identify each edge $e$ of $\mathcal{R}$ with $[0,1]$ with coordinate $s$ and the orientation on $e$ pointing from $0$ to $1$. A \emph{tropical Morse tree} is a map $\phi:\mathcal{R} \rightarrow S^1$ satisfying the following:
\begin{itemize}
\item[(i)] For any external vertex $v_{j,i}$ of $\mathcal{R}$, 
$$p_{j,i}:=\phi(v_{j,i}) \in S^1\big(\textstyle \frac{1}{n_{e_{i,j}}}\ZZ\big) $$
where $n_{e_{i,j}}$ is the acceleration associated to the edge adjacent to $v_{j,i}$, defined as in \eqref{Eq: acceleration}.
\item[(ii)] For an edge $e$ of $\mathcal{R}$, $\phi(e)$ is either an affine line segment or a point in $S^1$.
\item[(iii)] For each edge $e$, there is a section $v_e\in \Gamma(e,(\restr{\phi}{e})^{*}TS^1)$, called \emph{velocity of $e$}, satisfying 
\begin{itemize}
\item[1)] $v_e(v)=0$ for each external vertex $v$ adjacent to the edge $e$.
\item[2)] For each edge $e\cong[0,1]$ and $s\in [0,1]$, we have $v_e(s)$ is tangent to $\phi(e)$ at $\phi(s)$, pointing in the same direction as the orientation on $\phi(e)$ induced by that on $e$. By identifying $(\restr{\phi}{e})^{*}TS^1$ with the trivial bundle over $e$ using the affine structure on $S^1$, we have
\[\frac{d}{ds}v_e(s)=n_e\phi_{*}\frac{\partial}{\partial s} \]
\item[3)] For any internal vertex $v$ of $\mathcal{R}$ the following balancing condition holds. Let $e_1\ldots,e_p$ be the incoming edges and let $e_{out}$ be the outgoing edge adjacent to $v$. Then,
  \begin{equation}
    \label{eq:balancing_tropical_Morse}
    v_{e_{out}}(v)=\sum_{i=1}^p v_{e_{i}}(v) 
  \end{equation}
\end{itemize}
\end{itemize}
\end{definition}
\begin{remark}
We can generalise the definition of a tropical Morse tree in $S^1$, to a tropical Morse tree in any affine manifold as in \cite[Defn $4.1$]{Theta}. For computational convenience, in the remaining part of this article, we indeed consider lifts of tropical Morse trees $\phi: \mathcal{R} \to S^1$ to tropical Morse trees in $\phi: \mathcal{R} \to \RR$, which by abuse of notation are also denoted by $\phi:\mathcal{R}\to\RR$. %This is analogous to considering tropical corals in $\trcone \RR$ in \S \ref{A tropical counting problem}, rather than in $\trcone S^1$.
\end{remark}
\begin{definition}
\label{GeneralTMT}
A tropical Morse tree $\phi:\mathcal{R}\to\RR$ is called \emph{general} if all interior vertices of $\mathcal{R}$ are trivalent and at any vertex $v$ of $\mathcal{R}$ there is at most one adjacent edge to $v$ is contracted under $\phi$. We denote the set of general tropical Morse trees by $\mathcal{TMT}$. 
\end{definition}
\begin{remark}
\label{contraction}
Let $\phi: \mathcal{R} \to \RR$ be a tropical Morse tree.
The condition $v_e(v)=0$ on all external vertices $v$ in Definition \ref{tropicalMorsetree} can be interpreted as follows. Assume $v$ is an external vertex that is not the root vertex, start with zero velocity $v_e(v)=0$, and then increase it while tracing the orientation towards the root vertex until the last vertex adjacent to the root, where the velocity will evaluate positively. Hence, to achieve zero velocity also at the root vertex we need to have negative acceleration at the edge adjacent to it. This describes a systematic procedure to contract certain external edges under $\phi$, to achieve $v_e(v)=0$ -- see \cite{Abouzaid} and \cite[pg $35$]{Theta}. Hence, the edges that are contracted under $\phi$ correspond to the following ones.
\begin{itemize}
\item[(i)] Edges $e$ adjacent to an incoming vertex, if $n_e<0$.
\item[(ii)] The outgoing edge adjacent to the root vertex, if $n_e>0$.
\end{itemize}
Note that there exists always at least one external edge which is contracted under $\phi$, and at least one external edge which is not contracted under $\phi$ by studying all possible algebraic inequalities between the integers $n_i$ and $n_j$ determining the acceleration $n_e=n_j-n_i$ on the edges of $\mathcal{R}$.
\end{remark}

\begin{remark}
To each tropical Morse tree we associate a sign, which agrees with the sign associated to a holomorphic polygon, and is defined as in \cite[8.3.3]{Clay}. It follows from \cite[\S8.4.4]{Clay} that there is a bijective correspondence between tropical Morse trees $\phi:\mathcal{R} \to \RR$ of a fixed sign, and holomorphic polygons $P$ in $\mathbb{R}^2$, such that;
\begin{itemize}
\item[i.] If $\mathcal{R}$ has $d$-external vertices $v_{0,1},\ldots,v_{d,0}$ with $\phi(v_{i,j})=p_{i,j}$, then $P$ has $d$-vertices $\tilde{p}_{0,1},\ldots,\tilde{p}_{d,0}$ such that under the vertical projection map each $\tilde{p}_{i,j}$ maps to $p_{i,j}$.
\item[ii] The points $\tilde{p}_{0,1}, \tilde{p}_{1,2},\ldots,\tilde{p}_{d,0}$ on the boundary of $P$
are oriented cyclically in a counter-clockwise fashion.
\item[(iii)] If $\mathcal{R}$ is decorated with $\{ n_0,\ldots , n_d \}$. Then, each edge $L_i$ of $P$ has slope $S_{L_i}= -n_i$ for $ i \in \{1,\ldots,d\}$.
\end{itemize}
\end{remark}
In the following examples we illustrate tropical Morse trees and the corresponding polygons, assuming that signs are chosen so that the orientation of the boundary traced in counter-clockwise fashion is compatible with the ordering of the tuple $(L_0, \ldots L_d)$. 
\begin{example}
\label{SimpleExample}
Let $\phi:\mathcal{R}\rightarrow B$ be a tropical Morse tree, where $\mathcal{R}$ is a rooted ribbon tree decorated with 
\[ n_0=0, \,\ \,\ n_1= 3, \,\ \,\ n_2=5, \]
so that $\mathcal{R}$ separates the unit disk into three regions each labelled by $n_i$, as illustrated in Figure \ref{fig:polygon to tmt}. The acceleration vectors on the edges $e_{ij}$ of $\mathcal{R}$, adjacent to the external vertices $v_{i,j}$ are given by
\[ n_{e_{0,1}}=n_1-n_0=3,\,\ \,\ n_{e_{1,2}}=n_2-n_1=2 \,\ \,\  n_{e_{0,2}}=n_2-n_0=5 \]
Hence, by Remark \ref{contraction}, the edge $e_{0,2}$ gets contracted under $\phi$. We let 
\[  \phi{(v_{0,1})}=p_{0,1}=2, \,\ \,\  \phi{(v_{1,2})}=p_{1,2}=-3 \,\ \,\   \phi{(v_{0,2})}=p_{0,2}=0   \] 
so that the balancing condition in Definition \ref{tropicalMorsetree} holds. We illustrate the corresponding polygon to $\phi$ on the right hand side in Figure \ref{fig:polygon to tmt}.
\begin{figure}
	 \resizebox{.9\linewidth}{!}{\input{polygon.pspdftex}}
	\caption{A tropical Morse tree in blue and the corresponding triangle}
\label{fig:polygon to tmt}
\end{figure}
\end{example}

\begin{example}
\label{non-convex example}
Let $\mathcal{R}$ be the decorated ribbon tree illustrated in Figure \ref{fig:patience}. The accelerations $n_{e_{i,j}}=n_i-n_j$  on the external edges $e_{i,j}$ are given by
\[n_{e_{0,1}}  =  -2,~~~~ \,\ n_{e_{1,2}} =  3,~~~~ \,\
n_{e_{2,3}} = 2,~~~~ \,\
n_{e_{3,4}} = -8,~~~~ \,\
n_{e_{4,0}} = -5 . \]
Let $\phi: \mathcal{R} \to \RR$ be a tropical Morse tree with
\[ \phi(v_{0,1}) = p_{0,1} =8,~~~~\,\ \phi(v_{1,2}) = p_{1,2} =12,~~~~\,\ \phi(v_{2,3}) = p_{2,3} =0, ~~~~\,\ \phi(v_{3,4}) = p_{3,4} =6 \] 
Note that, by Remark \ref{contraction}, two external edges, that are not adjacent to the root vertex, which have negative acceleration are contracted. Furthermore, as the image of the edges of $\mathcal{R}$ overlap, we illustrate them in Figure \ref{fig:patience} on three copies of the real line, and we label on each of them the associated acceleration.
\begin{figure}[ht]
\begin{minipage}[b]{0.45\linewidth}
\centering
	 \scalebox{.4}{\input{backforth.pspdftex}}
%\caption{default}
%	\label{ExTMT}
\end{minipage}
\hspace{0.5cm}
\begin{minipage}[b]{0.45\linewidth}
\centering
		 \scalebox{1.2}{\input{TropMorse1.pspdftex}}
	\vspace{1.2cm}
%\caption{default}
\end{minipage}
\caption{A tropical Morse tree and the associated polygon}
\label{fig:patience}
\end{figure}

Denote by $w$ the vertex adjacent to the root vertex. We have a one-parameter family of choices for $\phi(w)$, for which balancing condition in Definition \ref{tropicalMorsetree} is satisfied. Choosing $\phi(w)=7$, from the balancing condition we get $p_{04}=\frac{28}{5}\in \frac{1}{5} \ZZ$. %Following the proof of Lemma \ref{TMTdeterminesType}, from this data of the images of all external vertices, we obtain the type $(\Gamma,u)$ of a tropical coral, which by definition determines a tropical coral up to scaling of the edges of $\Gamma$. In Figure \ref{ExCoral}, we illustrate a particular tropical coral of type $(\Gamma,u)$, determined from the tropical Morse tree $\phi:\mathcal{R} \to \RR$ in this example. For a detailed analysis of this case, and an illustration of the holomorphic polygon corresponding to this particular tropical Morse tree see 
For details see \cite[Example $8.8$]{MyThesis}. We illustrate the associated polygon one the right hand side of Figure \ref{fig:patience}. 

Note that the holomorphic disks contributing to the structure constants in the product in the Floer cohomology ring for the elliptic curve, in $0$-dimensional moduli spaces always have convex corners. However the polygon, corresponding to the tropical Morse tree in this example is in a $1$-dimensional moduli space, as there is a $1$-parameter choice of the image of the vertex $w\in \mathcal{R}$. This is expected in situations when there is more than one edge of the ribbon graph contracted. Indeed, it is shown in \cite[\S2.3]{Sl} that tropical Morse trees corresponding to convex polygons are those in which precisely one edge of $\mathcal{R}$ gets contracted.  
\end{example}

\subsection{Tropical Corals from Tropical Morse Trees}
\label{From Tropical Morse Trees to Tropical Corals}
In this section we show how to lift tropical Morse trees on $S^1$ to \emph{good types} of tropical corals on the truncated cone $\trcone S^1$ over $S^1$ and vice versa. 
%This passage from tropical Morse trees to tropical corals is a crucial step, which allows us to interpret the product in Floer cohomology in terms of log Gromov-Witten theory, following the correspondences in Figure~\ref{results}. 
\begin{definition}
\label{good type}
Let $(\Gamma,u)$ be the type of a coral graph as in Definition \ref{type of a coral graph}. We say $(\Gamma,u)$ is of \emph{good type} if for all edges $e \in E(\Gamma)$, the direction vector $u(e)$ projects to a non-zero element of $\RR$ under the projection map $\pr_2: \RR^2 \lra \RR$ onto the second component.
\end{definition} 

A tropical coral $h:\Gamma \to \trcone \RR$ of type $(\Gamma,u)$, of good type, uniquely determines a tropical Morse tree $\phi:\mathcal{R} \to \RR$, which we refer to as the tropical Morse tree associated to $h$. We construct $\phi$ from $h$ as follows. First, set $\mathcal{R}$ be the ribbon tree isomorphic to $\overline{\Gamma}$. Note that the orientation on $\overline{\Gamma}$ induces an orientation on $\Gamma$, and hence determines an orientation on $\mathcal{R}$. We refer to the orientation on an edge $e$ positive, if $h(e)$ points away from the boundary $\partial \trcone \RR$ and negative if it points towards the truncated cone. Endow the edges of $\mathcal{R}$ with accelerations given by $n_e = w_\Gamma(e)$, if the orientation on the edge $e$ is positive, and $n_e = - w_\Gamma(e)$ otherwise, where $w_\Gamma:E(\Gamma) \to \NN\setminus \{0\}$ is the weight function on $\Gamma$. Without loss of generality assuming $n_0=0$, this determines the decoration on $\mathcal{R}$. For any vertex $v\in V\setminus V^-(\bar{\Gamma})$, set 
\[  u_{v,e}:= (u^1,u^2) \]
the direction vector as in Definition \ref{type of a tropical coral}. Identify $\partial \trcone \RR$ with $\RR$, and define the tropical Morse tree $\phi:\mathcal{R} \to \RR$, by setting
\[
\phi(v) = \begin{cases} h(v) & \mathrm{if ~} v\in V^-(\bar{\Gamma})
\\ u^1/u^2 &  \mathrm{if ~} v\in V\setminus V^-(\bar{\Gamma}) \end{cases}
\]

\begin{example}
In Figure \ref{fig:CoralToTMT} we illustrate a tropical coral and whose associated tropical Morse tree is the one in Figure \ref{fig:patience}.
\begin{figure}
	 \resizebox{.6\linewidth}{!}{\input{CoralToTMT.pspdftex}}
	\caption{A tropical coral for which the associated tropical Morse tree is as in Figure \ref{fig:polygon to tmt}}
\label{fig:CoralToTMT}
\end{figure}

\end{example}

Conversely, from a tropical Morse tree, we first deduce a coral graph, together with its type, in the following result.
\begin{lemma}
\label{TMTdeterminesType}
A tropical Morse tree $\phi:\mathcal{R} \to \RR$ uniquely determines $(\Gamma,u)$, where $\Gamma$ is a coral graph, and $u$ is the type of a tropical coral, as in Definition ~\ref{type of a tropical coral}.
\end{lemma}
\begin{proof}
Let $\phi:\mathcal{R} \to \RR$ be a tropical Morse tree as in Definition \ref{tropicalMorsetree}. Assume $\mathcal{R}$ has $d+1$ external vertices $\{v_{ji}\}$ with $\phi(v_{ji})=p_{ji}$.  Denote by $e_{ij}$ the edge of $\mathcal{R}$ lying between regions~$i$ and~$j$. The graph $\mathcal{R}$ defines a bilateral graph $\overline{\Gamma}$, where the partition on the set of vertices $V(\bar{\Gamma})=V(\mathcal{R}),$ into sets of negative, positive and interior vertices are obtained as follows.
\begin{align*}
 V^- (\bar{\Gamma}) &= \{ v ~|~ \text{$v$ is an external vertex of $\mathcal{R}$, adjacent to an edge which is contracted under $\phi$   }  \}\\
 V^+ (\bar{\Gamma}) &= \{ v ~|~ \text{$v$ is an external vertex of $\mathcal{R}$ adjacent to an edge which is not contracted under $\phi$   }  \} \\
V^0 (\bar{\Gamma}) &= \{ v ~|~ \text{$v$ is an internal vertex of $\mathcal{R}$ } \} 
\end{align*}
We set $\Gamma = |\bar{\Gamma}| \setminus V^+(\Gamma)$, and define a weight function on $E(\Gamma)$ by 
\[w_\Gamma(e)= |n_e|\]
where $n_e$ is the acceleration of $e$.

Let $\trcone \RR$ be the truncated cone over $\RR$ defined as in Definition \ref{truncated cone}. For any tropical Morse tree $\phi:\mathcal{R} \to \RR$, we identify $\RR = \partial \trcone \RR$, so that $\phi(\mathcal{R}) \subset \trcone \RR$.  Without loss of generality we assume, after translating $\phi$ if necessary, that $\phi$ maps the root vertex, denoted by $v_{0d}$, to $( 0,1 )$. Let $v\in V^0(\Gamma)$, and
\[p=\phi(v) \in \partial \trcone \RR.\] 
Define the direction vector $u_{(v,e)}$ assigned to the flag $(v,e) \in F(\Gamma)$ by
\[ u_{(v,e)}:= \begin{cases} 
      \,\ \,\ \langle p,1 \rangle & \mathrm{if}~ e ~\mathrm{is ~ an ~ outgoing ~ edge ~ adjacent ~ to} ~ v \\
        - ~~ \langle p,1 \rangle &\mathrm{if}~ e ~\mathrm{is ~ an ~ incoming ~ edge ~ adjacent ~ to} ~ v 
   \end{cases}
\]
For a negative vertex $v_{ij} \in V^-(\Gamma)$, define $u_{v_{ij}} = - \langle p_{ij},1 \rangle$, where $p_{ij}=\phi(v_{ij})$. The direction vectors $\{u_{(v,e)}\}$ and $\{u_{v_{ij}}\}$ determine the type $(\Gamma,u)$ of the coral graph $\Gamma$, hence the type of a tropical coral. 
\end{proof}

\begin{theorem}
\label{SurjectionOfCoralsToTMT}
Let $\mathcal{T}$ be the set of tropical corals in $\trcone \RR$ which are of good type. For any tropical coral $h$, denote by $\psi_h$ the tropical Morse tree obtained from $h$. Then, the map
\begin{eqnarray}
\label{SurjectionTMT}
\Psi: \mathcal{T} & \lra &   \mathcal{TMT}\\
\nonumber
h & \longmapsto & \psi_{h} 
\end{eqnarray}
is a surjection, and the fiber over $\phi \in \mathcal{TMT}$ is the set of tropical corals of type $(\Gamma,u)$ determined by $\phi$.
\end{theorem}
\begin{proof}
We show this for the general case (which suffices for our purposes in this paper, since we set up the tropical counting problem for general tropical corals in \S \ref{A tropical counting problem}). The proof for non-general cases follows analogously. Let $\phi:\mathcal{R} \to \RR$ be a general tropical Morse tree and let $(\Gamma,u)$ be the type of the coral graph determined by $\phi$ by Lemma \ref{TMTdeterminesType}. Note that $(\Gamma,u)$ is a good type. We will construct a general tropical coral $(h:\trcone \RR \to \RR) \in\mathcal{T}_{(\Gamma,u)}$ with $\Psi(h)=\phi$. Let $l$ be the number of positive vertices of $\Gamma$ and let
\[ (r_0,\ldots,r_{l-2}) \in \RR_{>0}^{l-1}   \]
be an arbitrary $l-1$-tuple of positive real numbers. Note that there always exists at least one external edge which is contracted under $\phi$ -- see Remark \ref{contraction}. We fix a negative vertex $v \in V^-(\Gamma)$ with $h(v)=(p,1) \in \trcone \RR$. Let $v^0$ be the interior vertex of $\Gamma$ adjacent to $v$ and let $\{v_1,\ldots,v_{l-1}\}$ be the set of interior vertices of $\Gamma$ not adjacent to any negative vertex $V^-(\Gamma)$. Or aim is to construct a tropical coral $h:\Gamma \to \trcone \RR$ with 
\[\Phi(h)= (\rho(v^0),\rho(v_1),\ldots,\rho(v_{l-2}) ) = (r_0,\ldots,r_{l-2})  \]
where $\Phi$ is the map defined in the proof of Proposition \ref{EmbeddingOfT}. We will in a moment determine $h(v^0)$. Let $\pi_u$ be the canonical projection map $\RR^2 \to \RR^2/\RR \cdot u$ where $u=(p,1)$, and write $R_{p}$ for the ray in $\trcone \RR$ which maps to $[p]$ under $\pi_{u}$.\footnote{Note that the broken lines appearing in the scattering procedure in \cite{GHK} are the rays in $\trcone \RR$ obtained as the inverse image of $[p_{ji}]$ under $\pi_u$, where $u=(p_{ji},1) \in \partial \trcone \RR$ and $p=\phi(v)$ for an external vertex $v_{ji}$ of $\mathcal{R}$.} Fix $h(v^0)$ on $R_{p}$ with $\rho(h(v^0))=r_1 \in \RR_{>0}$. Construct the rest of $h$ inductively.  Let $v'$ be an interior vertex of $\Gamma$ such that $h(v')$ is already determined, let $v''$ be a vertex adjacent to $v'$. Assume $v''$ is also adjacent to a negative vertex ${v'}^-$. Then knowing the type and the positions of $h({v'}^-)$ and $h(v')$ determines $h(v'')$ uniquely (note that we initially choose $r_0 \in \RR_{>0}$ so that $h(v'') \in \trcone \RR$). If $v''$ is not adjacent to any negative vertex then $v''=v_k$ for some $v_k\in \{v_1,\ldots,v_{l-1}\}$. In this case, $h(v'')$ is determined uniquely by $\rho(v_k)$. Hence, the choice of $(r_0,\ldots,r_{l-2})$ determines $h\in \mathcal{T}_{(\Gamma,u)}$ uniquely. Note that the balancing condition for $h$ (\ref{parameterized tropical coral},ii) follows from the equation \eqref{eq:balancing_tropical_Morse}. Thus, the result follows.
\end{proof}
The following is an immediate corollary of Theorem \ref{SurjectionOfCoralsToTMT}.
\begin{corollary}
\label{thm: bijective correspondence corals and TMT}
Let $\lambda$ be a general constraint in the stable range as in Definition \ref{stable range}, for a good type $(\Gamma,u)$. Then, there is a bijective correspondence between tropical Morse trees in $S^1$, and tropical corals of type $(\Gamma,u)$, which is determined as in Lemma \ref{TMTdeterminesType} from the tropical Morse trees, matching $\lambda$.
\end{corollary}
Note that the choice of a constraint in the stable range in Corollary \ref{thm: bijective correspondence corals and TMT} has also a Floer theoretic interpretation. Indeed while defining the symplectic cohomology the fact that we restrict to the stable range, will impose certain energy bounds on Floer trajectories. Further details of these symplectic aspects will be studied in future work.

\begin{figure}
	\input{CoralsBoth.pspdftex}
	\caption{A tropical coral obtained from the tropical Morse tree in Figure \ref{fig:patience}}.
\label{fig:cool coral}
\end{figure}

\section{Counts of Log Corals}
\label{A curve counting problem}
Throughout this section we assume basic familiarity with log
geometry \cite{Kk, Ogus}. We denote a log scheme $(X,\M_X)$ by $X^{\dagger}$ and the structure homomorphism of $X^{\dagger}$ by 
\[\alpha_X \colon \M_X\to\mathcal{O}_X.\]
We refer to $\overline{\M}_{X}:=\M_X / \mathcal{O}_X^{\times}$ as the \emph{ghost sheaf} on $X$.  We use $s_*$ for the sections of a log structure $\M_X$, defined by the monomial functions
indicated in the subscript. The corresponding sections of $\ol{\M}_X$ are denoted by $\bar{*}$.

If $X$ is a toric variety associated to a fan $\Sigma$ in $N_\RR$,then there is a canonical \emph{divisorial log structure} $\M_X=\M_{(X,D)}$, where $D\subset X$ is the toric boundary divisor \cite[ex. 3.8]{Mark}. Note that for any $\sigma \in \Sigma$, we have the affine toric subset 
\[U_\sigma= \Spec\CC[\sigma^\vee\cap M]\] 
of $X$, and the divisorial log structure $\M_X$ is locally generated by the monomial functions on this open subset. That is, the canonical map
\begin{eqnarray}
\label{toric chart}
\sigma^\vee\cap M & \lra & \CC[\sigma^\vee\cap M],\\
\nonumber
m & \longmapsto & z^m
\end{eqnarray}
is a \emph{chart for the log structure}, as in \cite[Def. $2.9(1)$]{Kk}, on $U_\sigma$. 
%The minimal toric stratum of $U_\sigma$ is the algebraic torus 
%\[T_\sigma= \Spec \CC[M/(\sigma^\perp\cap M)].\]
%Now the following conditions are equivalent for $m\in \sigma^\vee\cap M$: 
%\begin{itemize}
%\item[(1)]~$z^m$ is invertible on $U_\sigma$; 
%\item[(2)]~$z^m$ is invertible on the smallest dimensional toric stratum of $U_\sigma$;
%\item[(3)]~$z^m$ is invertible on any toric prime divisor containing $T_\sigma$; 
%\item[(4)]~For any $n\in N\cap\sigma$ it holds $<n,m>=0$;
%\item[(5)]~$m\in \sigma^\perp$.
%\end{itemize}
%We thus obtain the following statement.
\begin{proposition}
\label{toric charts and ghost sheaves}
For any $x\in T_\sigma$ the toric chart~\eqref{toric chart} induces a canonical isomorphism
\[
\sigma^\vee\cap M/\sigma^\perp\cap M \stackrel{\sigma}{\lra} \ol \M_{X,x}.
\]
\qed
\end{proposition}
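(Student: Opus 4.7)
The strategy is to unpack the construction of the associated log structure at the stalk at $x \in T_\sigma$ and then quotient out the units.

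First I would recall from the general theory (see \cite{Kato}) that the log structure $\M_X|_{U_\sigma}$ associated to the pre-log structure $\beta \colon \sigma^\vee \cap M \to \mathcal{O}_{U_\sigma}$, $m \mapsto z^m$, may be written as the amalgamated sum of sheaves of monoids
\[
\M_X|_{U_\sigma} \;=\; (\sigma^\vee \cap M) \oplus_{P} \mathcal{O}_{U_\sigma}^\times,
\]
where $P := \beta^{-1}(\mathcal{O}_{U_\sigma}^\times)$ is identified with its image in $\mathcal{O}_{U_\sigma}^\times$ via $\beta$ and with a submonoid of $\sigma^\vee \cap M$ via the inclusion. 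Passing to the stalk at $x$ and then quotienting by $\mathcal{O}_{X,x}^\times$ collapses the factor coming from $\mathcal{O}_{U_\sigma}^\times$ and yields a canonical isomorphism
\[
\overline{\M}_{X,x} \;\simeq\; (\sigma^\vee \cap M)/P_x.
\]

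Next I would identify $P_x$ using the orbit-cone correspondence. The torus orbit $T_\sigma\subset U_\sigma$ is isomorphic to $\Spec \CC[\sigma^\perp \cap M]$, and for any $x \in T_\sigma$ the monomial $z^m$ is a unit in $\mathcal{O}_{X,x}$ if and only if $z^m(x)\neq 0$, which in turn holds if and only if $m \in \sigma^\perp \cap M$. (The ``only if'' is clear because $z^m(x) = 0$ forces $z^m\in\maxid_x$; the ``if'' follows because $\sigma^\perp \cap M \subseteq \mathcal{O}_{U_\sigma}^\times$ already at the character level on $T_\sigma$, and units at the orbit extend to units in the local ring.) This is exactly the statement $P_x = \sigma^\perp \cap M$. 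Substituting this into the previous display produces the claimed canonical isomorphism
\[
\sigma^\vee \cap M / \sigma^\perp \cap M \;\stackrel{\sim}{\lra}\; \overline{\M}_{X,x}.
\]

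The argument is essentially a computation with the universal property of the associated log structure, so there is no substantive obstacle; the one point that deserves a sentence is canonicity, namely that the isomorphism is induced by the chart~\eqref{toric chart} and is independent of auxiliary choices. This is immediate from the construction, since the composition $\sigma^\vee \cap M \to \M_X|_{U_\sigma} \to \overline{\M}_{X,x}$ agrees by definition with the quotient map modulo $\sigma^\perp \cap M$, and the latter depends only on the cone $\sigma$.
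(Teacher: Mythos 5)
Your proof is correct. The paper does not prove this proposition in-line — it simply refers to \cite{MyThesis}, Prop.\ A.30 — and your argument is the standard computation such a reference contains: the chart realizes $\M_X|_{U_\sigma}$ as the log structure associated to $\beta\colon \sigma^\vee\cap M\to \O_{U_\sigma}$, so $\ol\M_{X,x}\cong(\sigma^\vee\cap M)/\beta_x^{-1}(\O_{X,x}^\times)$, and the orbit–cone correspondence identifies $\beta_x^{-1}(\O_{X,x}^\times)$ for $x\in T_\sigma$ with $\sigma^\perp\cap M$, which is precisely the unit subgroup of $\sigma^\vee\cap M$, so the quotient is well defined and sharp. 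No gaps.
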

\begin{proof}
The proof is straight forward and can be found in \cite[Prop A.30]{MyThesis}. 
\end{proof}
\subsection{The log structure on the Tate curve}
\label{The log structures on the Tate curve and its degeneration}
Fix $b \in \ZZ_{>0}$, and let $\tilde{\pi}: Y\to \Spec\CC[s,t]$ be the degeneration of the unfolded Tate curve, obtained from $(\trcone \RR, \trcone \P_b)$ as in \S \ref{The unfolded Tate curve}. We endow $Y$ with the divisorial log structure $\alpha_Y:\M_Y\to \mathcal{O}_Y$, where 
\[\tilde{D}:=\tilde{\pi}^{-1}(st=0)\subset Y \]
The affine cover \eqref{stb} for $Y$ induces an affine cover of $Y_0$ by restricting to $t=0$, which is given by a countable union of the open sets
\[  \mathcal{U} =  \Spec \mathbb{C}[x,y,s]/ (xy)   \]
A toric chart for the log structure $\M_{Y_0}$ is given by Proposition \ref{toric charts and ghost sheaves}. The stalks of $\overline{\mathcal{M}}_{Y_0}$ are classified in Table \ref{StalksOfTheGhostSheaf},
\begin{table}
\begin{tabular}{cc} \toprule
Points $p_i$ & $\overline{\M}_{Y_0,p_i}$  \\ \midrule
 $p_1\in\mathcal{U}\setminus\{(y=0)\cup(s=0)\}$ & $  \langle  \overline{t} \rangle $ \\ 
$p'_1 \in \mathcal{U}\setminus\{(x=0)\cup(s=0)\}$ & $  \langle  \overline{t} \rangle $  \\
$p_2  =  \mathcal{U} \cap (x=y=0) $ & $\langle \overline{x},\overline{y},\overline{t} ~|~\overline{x}\overline{y}=\overline{t}^b \rangle $ \\ 
$p_3 = \mathcal{U} \cap (x=s=0)$ & $\langle \overline{x},\overline{s},\overline{t} ~|~\overline{x}=\overline{s}\overline{t}^b \rangle $  \\
$p'_3 = \mathcal{U} \cap (y=s=0)$ &  $\langle \overline{y},\overline{s},\overline{t} ~|~\overline{y}=\overline{s}\overline{t}^b \rangle$ \\
$ p_4 = \mathcal{U} \cap (x=y=s=0)$ & $\langle \overline{x},\overline{y},\overline{s},\overline{t} ~|~\overline{x}\overline{y}=(\overline{s}\overline{t})^b \rangle$
\\ \bottomrule \\
\end{tabular}
\caption{Stalks of $\overline{\mathcal{M}}_{Y_0}$}
\label{StalksOfTheGhostSheaf}
\end{table}
where we present a monoid with a set of generators $G$ and relations $R$ among elements of $G$ by $\langle  G~|~R \rangle$.
%===========================================================

\subsection{Log curves}
\label{Sec: Log curves}
Gromov-Witten theory has a generalisation to the setting of logarithmic geometry \cite{logGW, logGWbyAC}. In log Gromov-Witten theory one works over a base log scheme $(S,\mathcal{M}_S)$. The scheme $S$ in
practice could be the spectrum of a discrete
valuation ring with the log structure induced by the closed point
(one-parameter degeneration), or it could be $\Spec \kk$, for $\kk$ an algebraically closed field of characteristic zero, endowed with the trivial
log structure (absolute situation), or $\Spec \kk$ endowed with \emph{the standard log structure} (central fibre of one-parameter
degeneration). The standard log structure up to isomorphism is given uniquely by a monoid $Q$ with $Q^\times=\{0\}$ giving rise to the log structure
\[
Q\oplus\kk^\times\lra \kk,\quad
(q,a)\longmapsto \begin{cases} a,&q=0\\ 0,&q\neq0.\end{cases}
\]
on $\Spec\kk$. 
We will restrict our attention to the latter case and take the log point endowed with the standard log structure as a base scheme. Throughout this paper we assume
\[ \kk=\CC \,\ \mathrm{and} \,\ Q:=\NN \]
and denote the standard log point by
\[{\Spec \CC}^{\dagger}:=(\Spec \CC,\NN \oplus \CC^{\times}).\]
One generalises the notion of a stable map to the log setting as follows. Consider an ordinary stable map with
a number, say $\ell$, of marked points. Thus we have a proper
curve $C$ with at most nodes as singularities, a regular map $f: C\to X$, a tuple $\mathbf{x}=(x_1,\ldots,x_\ell)$ 
of closed points in the non-singular locus of $C$. Moreover the triple $(C,\mathbf{x},f)$ is supposed to fulfill the stability condition of finiteness of the group of automorphisms of $(C,\mathbf{x})$ commuting with $f$. To promote such a stable map to a stable log map amounts to endow all spaces with (fine, saturated) log structures and lift all morphisms to morphisms of log spaces. Then $C\to\Spec\CC$ is promoted to a smooth morphism of log spaces
\[
\pi:{C}^{\dagger}\lra \Spec\CC^{\dagger}.
\]
and we have a log morphism $f: {C}^{\dagger}\lra X^{\dagger}$,
where $X^{\dagger}=(X,\M_X)$ and $C^{\dagger}=(C,\M_C)$ are log schemes. Given a morphism of log spaces $f: {C}^{\dagger}\lra X^{\dagger}$, we denote by $f:C \lra X$
%\[\underline{f}: C\lra X\] 
the underlying morphism of schemes. 
%By abuse of notation, the underlying morphism on topological spaces is also denoted by $\underline{f}$. 
Throughout this paper we will assume that the arithmetic genus of the domain curve $C$ is zero and thus will work on the Zariski site, rather than the \'etale site which would be needed for more general cases.

Let $x\in X$ be a closed point in  and let $f^{\flat}_{x}:\M_{X,f(x)}\to \M_{C,x}$ be the morphism of monoids induced by $f: {C}^{\dagger}\lra X^{\dagger}$. Then, by the definition of a log morphism \cite[pg 99]{Mark} we obtain the following commutative diagram on the level of stalks
\begin{equation} 
\label{local ring level}
\xymatrix@C=30pt
{
\M_{X,f(x)} \ar[r]^{f^{\flat}_{x}}\ar[d]_{\alpha_{X,f(x)}}
&\M_{C,x} \ar[d]^{\alpha_{C,x}}\\
\mathcal{O}_{X,f(x)} \ar[r]^{f^{\sharp}_{x}}&\mathcal{O}_{C,x}
}
\end{equation}
Let $\kappa:\M_X \xrightarrow{/\mathcal{O}^{\times}_X} \overline{\M}_X$ be the quotient homomorphism. By the commutativity of the above diagram there is a morphism induced by $f$ on the level of ghost sheaves, denoted by 
\[\overline{f}^{\flat}_x:\overline{\mathcal{M}}_{X,f(x)}\rightarrow \overline{\mathcal{M}}_{C,x}\] for a closed point $x\in C$. By abuse of notation, we denote the morphism $\overline{\mathcal{M}}^{gp}_{X,f(x)}\rightarrow {\overline{\mathcal{M}}^{gp}_{C,x}}$
on group level is also by $\overline{f}^{\flat}_x$.

We demand the morphism $f:{C}^{\dagger}\to \Spec \CC^{\dagger}$ to be \emph{log smooth} \cite[Defn. $3.23$]{Mark}. We furthermore demand that the regular points of $C$ where $\pi$ is not strict are exactly the marked points. We will recall the precise shape of such log structures on nodal curves instantly. 
\begin{remark}
After a moment of thought one may conclude that an algebraic stack based on this notion of a log smooth map over the log point $(\Spec \CC, Q\oplus \CC^{\times})$ can never be of finite type, because for a given log map one can always enlarge the monoid $Q$, for example by embedding $Q$ into $Q\oplus\NN^r$. To solve this issue, a basic insight in \cite{logGW, logGWbyAC} is that there is a universal, minimal choice of $Q$. In this \emph{basic monoid} there are just enough generators and relations to lift $f:C\to X$ to a morphism of log spaces while maintaining log smoothness of ${C}^{\dagger}\to \Spec\CC^{\dagger}$. After the usual fixing of topological data (genus, homology class, degree) the corresponding stack of \emph{basic} stable log maps turns out to be a proper Deligne-Mumford stack. For the present paper this general theory is both a bit too general and still a bit too limited. It is too general because we will end up with a finite list of unobstructed stable log maps over the standard log point. In particular, we have a one-parameter smoothing of log maps and there is always a distinguished morphism 
\[ (\Spec\CC,\NN)\to (\Spec\CC, Q)\] 
just coming from our degeneration situation as in \cite{NS}. Therefore, throughout this paper we comfortably assume the basic monoid $Q$ is given by the natural numbers. Moreover, there is no need for working with higher dimensional moduli spaces or with virtual fundamental classes, as the moduli space of the stable log maps over $\Spec \CC^{\dagger}$ form a proper Deligne-Mumford stack of finite type \cite[Cor. $2.8$]{logGW}. The general theory is also too restricted because we will have to admit non-complete domains. The presence of non-complete components requires an ad hoc treatment of compactness of our moduli space that is special to our situation. 
\end{remark}
\begin{definition}
\label{logCurve}
A \emph{log smooth curve} over the standard log point ${\Spec\CC}^{\dagger}$ consists of a fine saturated log scheme $C^{\dagger}:=(C,\mathcal{M}_C)$
with a log smooth, integral morphism $\pi:C^{\dagger} \rightarrow {\Spec\CC}^{\dagger}$
of relative dimension $1$ such that every fibre of $\pi$ is a reduced and connected curve. 
%By abuse of notation we denote the morphism between the underlying topological spaces by again by $\pi$; 
%\[\pi: C \rightarrow \Spec \CC\].
\end{definition}
Note that we do not demand $C$ to be \emph{proper} which is the case in \cite[Defn. $1.3$]{logGW}. If we specify a tuple of sections $\textbf{x}:=(x_1,\cdots,x_l)$ of $ \pi$ so that over the non-critical locus $U\subset  C$ of $\pi$ we have
\[\ol \M_C|_U\simeq \pi^* \ol\M_{{\Spec \CC}^{\dagger}}\oplus \bigoplus_i
{x_i}_* \NN_{\Spec  \CC}\] then we call the log smooth curve ${C}^{\dagger}$ \emph{marked} and denote it by $(C / {\Spec\CC}^{\dagger},\textbf{x})$.
Before going through more details we would first like to make a few remarks on the local structure of log smooth curves over $\Spec \CC^{\dagger}$. Let $0\in \Spec\CC$ be the closed point. Then we have an isomorphism
\begin{eqnarray}
\ol{\M}_{\Spec \CC,0} &\longrightarrow & \NN
\nonumber \\
 \overline{t}^a  & \longmapsto & a
\nonumber
\end{eqnarray} 
Let $\sigma: \NN\to \M_{\Spec \CC,0}$ be the chart for the log
structure on $\Spec \CC$ around $0\in \Spec\CC$, given by
\[ \sigma(q) =
\left\{
	\begin{array}{ll}
		1  & \mbox{if } q = 0 \\
		0 & \mbox{if } q > 0
	\end{array}
\right.
\]
The following crucial theorem is a special case of \cite [p.222]{Kf}, as we restrict our attention only to log smooth curves over the standard log point $\Spec \CC^{\dagger}$.
\begin{theorem} 
\label{Thm: structure of log curves}
Locally $C$ is isomorphic to one of
the following log schemes $V$ over $\Spec \CC$.
\begin{enumerate}
\item[(i)]
$\Spec(\CC[z])$ with the log structure induced from the homomorphism
\[
\NN\lra \O_V,\quad q\longmapsto \sigma(q).
\]
\item[(ii)]
$\Spec(\CC[z])$ with the log structure induced from the homomorphism
\[
\NN\oplus\NN\lra \O_V,\quad (q,a)\longmapsto z^a \sigma(q).
\]
\item[(iii)]
$\Spec(\CC[z,w]/(zw-t))$ with $t\in\maxid$, where $\maxid$ is the maximal ideal in $\CC$ and with the log structure
induced from the homomorphism
\[
\NN\oplus_\NN \NN^2\lra \O_V,\quad
\big(q,(a,b)\big)\longmapsto \sigma(q) z^a w^b.
\]
Here $\NN\to\NN^2$ is the diagonal embedding and $\NN\to \NN$,
$1\mapsto\rho_q$ is some homomorphism uniquely defined by $C\to \Spec\CC$.
Moreover, $\rho_q\neq 0$.
\end{enumerate}
In this list, the morphism $C^{\dagger}\to \Spec\CC^{\dagger}$ is represented by the
canonical maps of charts $\NN\to \NN$, $\NN\to \NN\oplus\NN$
and $\NN\to \NN\oplus_\NN \NN^2$, respectively where we identify the domain $\NN$ always with the first factor of the image. 
%\qed
\end{theorem}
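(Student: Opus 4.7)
The plan is to follow F.~Kato's original classification of log smooth curves, specialised to the standard log point. Since $\pi:C^\dagger\to\Spec\CC^\dagger$ is log smooth and integral of relative dimension $1$, Kato's criterion for log smoothness provides, at every closed point $x\in C$, a local chart: a sharp fine saturated monoid $P$, a homomorphism $P\to\M_{C,x}$, and a factorisation of the underlying map through a strict smooth morphism to $\Spec\CC[P]\times_{\Spec\CC[\NN]}\Spec\CC$, where $\NN\to P$ is the map induced by $\overline{\pi}^\flat$ on ghost sheaves. Consequently the classification reduces to enumerating those sharp fine saturated monoid homomorphisms $\NN\hookrightarrow P$ whose associated toric fibre over $\Spec\CC^\dagger$ is (\'etale locally) a reduced connected curve.

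First I would analyse $\overline{\M}_{C,x}$, which is sharp, fine, and saturated. Integrality forces $\NN\to \overline{\M}_{C,x}$ to be injective, and the condition that the geometric generic fibre of $\pi$ is a smooth curve forces $\rank \overline{\M}_{C,x}^{\gp}/\NN^{\gp}\le 1$. Combined with the observation that $\overline{\M}_{C,x}$ is a submonoid of $\ZZ^{r}$ with $r\le 2$, there are essentially only three possible sharp fine saturated monoids that can occur: $P=\NN$ with the identity map, $P=\NN\oplus\NN$ with $\NN$ mapping to the first factor, and $P=\NN\oplus_\NN \NN^2$ where $\NN\to\NN^2$ is a positive multiple of the diagonal. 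These correspond respectively to a smooth point at which $\pi$ is strict, a smooth marked point, and a nodal point.

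Second, I would realise each of the three monoid data as an explicit local model. For $P=\NN$ the chart gives $V=\Spec\CC[z]$ with log structure pulled back from the base, yielding~(i). For $P=\NN\oplus\NN$, adjoining the generator of the second $\NN$ contributes a sharp section $s_x$ whose image in $\O_V$ is the coordinate $z$, producing the chart in~(ii). For the diagonal pushout $P=\NN\oplus_\NN\NN^2$ with $1\mapsto\rho_q\cdot(1,1)$, the fibre $\Spec\CC[P]\times_{\Spec\CC[\NN]}\Spec\CC$ locally looks like $\Spec\CC[z,w]/(zw)$, and after smooth base change one arrives at $\Spec\CC[z,w]/(zw-t)$ with $t\in\maxid$, giving~(iii). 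The positivity $\rho_q\neq 0$ is forced by integrality of $\pi^\flat$: otherwise the map $\NN\to P$ would factor through a non-integral quotient at the node.

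The only subtle point is the exhaustiveness of the enumeration in the second step, i.e.\ verifying that no other sharp fine saturated $P$ can occur under the relative-dimension-one constraint. This is the main obstacle, and it is handled by a direct toric argument: any fine saturated sharp monoid with $\NN\hookrightarrow P$ satisfying $\dim_\CC \big(\CC[P]\otimes_{\CC[\NN]}\CC\big)_{\mathrm{red}}=1$ at the origin must have $P^{\gp}$ of rank at most two, and the classification of such monoids by their extremal rays produces exactly the three cases above. Given this, matching the explicit charts with the canonical maps $\NN\to\NN$, $\NN\to\NN\oplus\NN$ and $\NN\to\NN\oplus_\NN\NN^2$ into the first factor is a direct verification, completing the proof.
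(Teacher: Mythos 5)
The paper offers no proof of this theorem to compare against: it is invoked as a special case of F.~Kato's structure theorem for log smooth curves (\cite{Katof}, p.~222), merely specialised to the standard log point. What you propose is in effect a reconstruction of the proof of that cited theorem, via K.~Kato's chart criterion for log smoothness followed by a classification of the injections $\NN\to P$ of sharp fine saturated monoids whose toric fibre over the standard log point is a reduced connected curve, and then matching the three resulting charts with the three local models. That is exactly the route of the reference, and your outline of it is sound.

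Three places need tightening before this is a proof. First, over the standard log point there is no ``geometric generic fibre''; the bound $\rank\big(\overline{\M}_{C,x}^{\gp}/\NN\big)\le 1$ should instead come from relative dimension one, since the toric model contributes $\rank(P^{\gp}/\NN)$ to the fibre dimension. Second, and most substantively, the exhaustiveness criterion you state at the decisive step, $\dim_\CC\big(\CC[P]\otimes_{\CC[\NN]}\CC\big)_{\red}=1$, does not cut the list down to the three cases: for instance $P=\NN$ with $1\mapsto e\ge 2$, or $P=\NN^2$ with $1\mapsto(2,1)$, pass this test, and they are excluded only because the fibre $\CC[P]/(z^{\rho})$ itself, not its reduction, must be reduced (a scheme smooth over a non-reduced one is non-reduced, and reducedness of the fibres of $\pi$ is part of Definition~\ref{logCurve}); concretely one needs $z^{\rho}$ to vanish to order at most one along each toric boundary divisor of $\Spec\CC[P]$, and with that condition the toric classification does yield exactly $\NN$, $\NN\oplus\NN$ and $\NN\oplus_\NN\NN^2$. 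You did state the reducedness requirement in your first paragraph, so the fix is to carry it through the enumeration, but that is where the real content of the theorem lies. Third, $\rho_q\neq 0$ is not forced by integrality in the way you assert; it follows either from the finite-kernel condition in the chart criterion (the zero map $\ZZ\to P^{\gp}$ has infinite kernel, so such a chart cannot witness log smoothness) or, more directly, because $zw$ vanishes at the node, whereas $\rho_q=0$ would force $zw=\sigma(0)=1$.
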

%\begin{remark}
In Theorem \ref{Thm: structure of log curves} cases $\mathrm{(i),(ii),(iii)}$ correspond to neighbourhoods of general points, marked points and nodes of $C$ respectively. %The local structure of $\ol \M_C$ at these points is thereforre given as in the following table.
%\vspace{0.5cm}
%\begin{center}
 %   \begin{tabular}{| l | l | }
  %  \hline
   % Points $x$ on $C$ & $\ol \M_{C,x}$ \\ \hline
    %$x=\eta$ a generic point & $\NN$  \\ \hline
    %$x=p$ a marked point & $ {\NN}^2$  \\ \hline
    %$x=q$ a node & $\NN \oplus_{\NN} \NN^2$   \\
    %\hline
    %\end{tabular}
%\end{center}
%\vspace{0.5cm}
%Note that the base moniod $\NN$, together with the choice of $\rho_q\neq 0$ at each node $q\in C$, determines the log structure $\alpha_C:\M_C\to \mathcal{O}_C$.
%\end{remark}
%\begin{definition}
Nodes and marked points of a log smooth curve $C^{\dagger} \to \Spec \CC^{\dagger}$ are referred to as \emph{special points} of $C$.
%\end{definition}
%Note that since the total space $Y$ is a toric variety, the log structure ${\overline{\M}}_{Y}^{gp}$ is globally generated. In particular, for every closed point $x\in Y_0$
%\[ \Gamma(Y_0,{\overline{\M}}_{Y_0}^{gp}) \twoheadrightarrow {\overline{\M}}_{Y_0,x}^{gp} \]
%is surjective.
Now we are ready to generalise in the next section the notion of a log map over $\Spec \CC^{\dagger}$ as in \cite[\S $1$]{logGW} to the case where the domain includes some special non-complete components. 

\subsection{Log corals}
\label{Log corals on $Y_0$}
Let $Y_0=\widetilde{T}_0 \times \AA^1$ be the central fiber of the degeneration of the unfolded Tate curve defined in \S \ref{The Tate curve and its unfolding}. In this section we define and discuss the properties of a special kind of log maps on $Y_0$, with non-complete components.
\begin{definition}
\label{log map}
Let $(C/\Spec \CC^{\dagger},\textbf{x})$ be a marked log smooth curve. A morphism of log schemes $f: C^{\dagger} \rightarrow Y_0^{\dagger}$ is called a \emph{log map} if the following holds:
\begin{itemize}
\item[(i)] The morphism $f$ fits into the following commutative diagram
\begin{equation} 
\xymatrix@C=30pt
{
{C}^{\dagger} \ar[r]^{f}\ar[rd]_{\pi}
&(Y_0, \M_{Y_0})\ar[d]\\
&{\Spec\CC}^{\dagger}
}
\nonumber
\end{equation}
\item[(ii)] For each non-complete irreducible component $C'\subset C$, there is an isomorphism $C'\cong \mathbb{A}^1$.
\item[(iii)] The map $s\circ \restr{f}{C'}:C'\rightarrow \mathbb{A}^1$ is dominant, where $s:Y_0\rightarrow \AA^1_s$
is the projection map.
\item[(iv)] For each marked point $p_i\in C$, we have $f(p_i)\in (Y_0)_{s=0}$ where $(Y_0)_{s=0}$ is the fiber over $0\in \mathbb{A}^1_s$ under the map $s:Y_0 \rightarrow \mathbb{A}^1$. 
%\item[v.)] For each special point $p\in C'$ on a non-complete component $C'\subset C$ we have
%\[f(p)\in (Y_0)_{s=0}\] 
\item[(v)] The stability condition holds, that is, the automorphism group $\mathrm{Aut}(C / {\Spec\CC}^{\dagger},\textbf{x},f)$ is finite. 
%If $q\in C$ is a point such that $f(q)\in \Sing(Y_0)$, then $C$ has a node at $q$ and $f$ maps the two branches $C_1$ and $C_2$ of $C$ at $q$ to different irreducible components ?? pg 138 mark
\end{itemize}
\end{definition}
To set up the log counting problem in the next section, analogously as in the tropical counting problem, we restrict our attention to \emph{general} log maps defined as follows.
\begin{definition}
\label{general log map}
A log map $f:{C}^{\dagger}\to {Y_0}^{\dagger}$ is called \emph{general} if each non-complete component $C'\subset C$ has only one special point and each complete component contains at most three special points.
\end{definition}
For a general log map if a non-complete component $C'\subset C$ has a unique special point which is a marked point, then by the connectivity of the domain $C$ will have only one component $C'$. As this case can be treated easily throughout the next sections we omit it and assume the unique special point on each non-complete component on a general coral is a node.
\begin{remark}
\label{varphi}
Let $C'\subset C$ be a non-complete component with generic point $\eta$. Denote the function field of $C'=\mathbb{A}^1$ by  
\[\mathcal{O}_{C',\eta}=k(z)\]
Recall the log structure on $Y_0$ from \S \ref{The log structures on the Tate curve and its degeneration}. Let $Z\subset Y_0$ be the smallest toric stratum containing $f(C')$. Then, we have one of the two following cases.
\begin{itemize}
\item[i)] $\dim \overline{Z}=2$: in this case we call  $f$ \emph{transverse} at $C'$.
A chart for the log structure $\M_{Y_0}$ around $f(\eta)$ is given by 
\begin{align}
\overline{\M}_{Y_0}|_{\mathcal{U}}&\rightarrow\M_{Y_0}|_{\mathcal{U}}
\nonumber\\
\overline{t}&\mapsto s_t
\end{align}
for an open neighbourhood $\mathcal{U}$ of $f(\eta)$. Note that on $\mathcal{U}$ we have either $y \neq 0$ or $x \neq 0$. Throughout this section without loss of generality we assume we are in the former case. Recall the log structure on $Y_0$ described in \S \ref{The log structures on the Tate curve and its degeneration}. The morphism
\begin{eqnarray}
f^{\flat}_{\eta}:\M_{Y_0,f(\eta)} & \longrightarrow & \M_{C,\eta}
\nonumber \\
s_x &\longmapsto & \varphi_x(z)
\nonumber \\
s_s &\longmapsto & \varphi_s(z)
\nonumber 
\end{eqnarray}
is only determined by $\varphi_x,\varphi_s \in k(z)\setminus \{0\}$. Note that as $s$ is invertible on $\mathcal{U}$, we have  $f^{\sharp}_{\eta}(s_s)=f^{\flat}_{\eta}(s_s)$ where $f^{\sharp}_{\eta}:\mathcal{O}_{Y_0,f(\eta)}\to \mathcal{O}_{C,\eta}$ is the morphism on stalk level induced by the scheme theoretic map $\ul f$. Moreover, as $\dim \overline{Z}=2$, it is not contained in the double locus of $Y_0$ and hence the restriction of the morphism $(Y_0,\M_{Y_0}) \to \Spec \CC^{\dagger}$ to the complement of the double locus in $\overline{Z}$ in definition \ref{log map} is strict, which means $\M_{Y_0}$ is isomorphic to the log structure induced by the pull-back of the log structure on $\Spec \CC^{\dagger}$ and hence the image of $s_t$ is determined as $f^{\flat}_{\eta}(s_t)=s_t$.
\item[ii)] $\dim \overline{Z}=1$: in this case we call $f$ \emph{non-transverse} at $C'$. As $s_x s_y =s^b s_t^b$ where $s \in \O^{\times}_{Y_0}(\mathcal{U})$ is invertible, a chart for the log structure $\M_{Y_0}$ around $f(\eta)$ is given by
\begin{eqnarray}
\nonumber
{\overline{\M}_{Y_0}|}_{\mathcal{U}}&\longrightarrow & {\M_{Y_0}|}_{\mathcal{U}}
\nonumber\\
\overline{x}&\longmapsto & s_x
\nonumber \\
\overline{y}&\longmapsto & s^{-b} s_y
\nonumber \\
\overline{t}&\longmapsto & s_t
\nonumber
\end{eqnarray}
for an open neighbourhood $\mathcal{U}$ of $f(\eta)$, where $\overline{x}\cdot \overline{y}=\overline{t}^b$.  %The coordinates on $\mathcal{U}$ are given by $\{x,y,s,t~|~xy=(st)^b\}$. 
The morphism
\begin{eqnarray}\label{alfabeta}
f^{\flat}_{\eta}:\M_{Y_0,f(\eta)} &\longrightarrow & \M_{C,\eta}
\nonumber \\
s_x &\longmapsto & \varphi_x(z)\cdot t^{\alpha}
\nonumber \\
s_y &\longmapsto & \varphi_y(z)\cdot t^{\beta}
\nonumber \\
s=s_s &\longmapsto & \varphi_s(z)
\end{eqnarray}
is determined by 
$\varphi_x,\varphi_y,\varphi_s \in k(z)\setminus \{0\}$ and $\alpha,\beta\in\NN\setminus \{0\}$ such that $\alpha+\beta=b$ and $\varphi_x\varphi_y=\varphi_s^b$.
\end{itemize}
\end{remark}
The following condition will be crucial while showing in the upcoming Theorem \ref{Log curves define tropical corals}, the necessary balancing requirements of the tropical analogues of the log invariants we work with in this paper.
\begin{definition}
\label{parallel}
Let $f:C^{\dagger} \to Y_0^{\dagger}$ be a log map and let $C'$ be a non-complete component of $C$. Let $val_{\infty}: k(z) \rightarrow \ZZ$ be the valuation at $z=\infty$. We call $f$ \emph{parallel} at $C'$ if the following holds.
\begin{itemize}
\item[i)] $val_{\infty}(\varphi_x)=0$, if $f$ is transverse at $C'$.
\item[ii)] $val_{\infty}(\varphi_x^{\beta} / \varphi_y^{\alpha})=0$, if $f$ is non-transverse at $C'$ .
\end{itemize}
where $\varphi_x, \varphi_x^{\beta}, \varphi_y^{\alpha}$ are defined as in Remark \ref{varphi}.
\end{definition}
\begin{definition}
\label{log coral}
A log map $f: C^{\dagger} \rightarrow {Y_0}^{\dagger}$ is called a \emph{log coral} if $f$ is \emph{parallel} at each non-complete component of $C$.
%We denote a log coral $f: C^{\dagger} \rightarrow (Y, \M_Y)$ of base order $b$ by $(C / {\Spec\CC}^{\dagger},\textbf{x},f)$. 
\end{definition}
We will sometimes replace $Y_0$ by an open subset of it containing $\im f$ or by the $\ZZ$-quotient of it in \S \ref{log count equals tropical count}. The definition of log corals then still makes sense with the obvious modifications.

%===========================================================

%===========================================================
\subsection{The count of log corals}
\label{The set up of the counting problem}
In this section we define the count of log corals $f:{C}^{\dagger}\to {Y_0}^{\dagger}$. We assume the charts for the log structure $\M_{Y_0}$ are given as in \S \ref{The log structures on the Tate curve and its degeneration}. We define charts for $\M_{C}$ using Theorem \ref{Thm: structure of log curves}. Note that, for generic points we have the isomorphism 
\begin{align*}
{\overline{\M}}^{gp}_{C,\eta}\rightarrow \ZZ
\nonumber\\
{\bar{t}}^a\mapsto a
\nonumber
\end{align*}
and for marked points $p$ of $C$ have the isomorphism 
\begin{align*}
{\overline{\M}}^{gp}_{C,p}\rightarrow \ZZ\oplus \ZZ
\nonumber\\
({\bar{t}}^a,{\bar{z}}^b)\mapsto (a,b).
\nonumber
\end{align*}
We denote by 
\begin{equation}
    \label{Pr for log}
    \pr_i:\overline{\M}_{C,p}^{gp} \to \ZZ
\end{equation}
the projection to the $i$-th factor for $i=1,2$, and consider the following charts for $\M_{C}$.
\begin{itemize}
\label{ChartsForC}
\item[(i)] For a generic point $\eta$ of an irreducible component of $C$;
\begin{eqnarray}
\overline{\M}_{C,\eta} & \longrightarrow & \M_{C,\eta}
\nonumber \\
\bar{t} & \longmapsto & s_t
\nonumber 
\end{eqnarray}
\item[(ii)] For a marked point $p$ of $C$;
\begin{eqnarray}
\overline{\M}_{C,p} & \longrightarrow & \M_{C,p}
\nonumber \\
\bar{t} & \longmapsto s_t
\nonumber \\
\bar{z} & \longmapsto s_z
\nonumber
\end{eqnarray}
%\item[(iii)] For a nodal point $q$ of $C$ we have $\overline{\M}_{C,q}=\NN \oplus_{\NN} \NN^2$ and a chart is given analogously as in $(ii)$.
\end{itemize}

To define the count of log corals, we describe incidence conditions given by a degree $\Delta$ and an asymptotic constraint $\lambda$. For this, given a log coral $f:C^{\dagger}\to Y_0^{\dagger}$, we first discuss how to assign an element of $N$ to marked points and to non-complete irreducible components of $C$, following the general scheme in \cite{logGW}. 

\begin{proposition}
\label{Prop:contact order}
Let $f:C^{\dagger}\to Y_0^{\dagger}$ be a log coral. 
\begin{itemize}
\item[(i)] For each marked point $p_i$ of $C$, there is a corresponding element $u_{i} \in N$ recording the logarithmic contact order of $p_i$ with $Y_0$.
\item[(ii)]  Each non-complete component $C'\simeq\AA^1$ of $C$ determines an integral point in the boundary of the truncated cone $ \trcone \RR$.
\end{itemize}
\end{proposition}
\begin{proof}
Recall the charts for the log structure on $Y_0$ from \S \ref{The log structures on the Tate curve and its degeneration}. First, given a marked point $p_i$, define the associated element of $N$ as follows. Let $\Xi\in\RR$ be the interval such that the corresponding toric chart
\[
(C(\trcone\Xi))^\vee\cap (M\oplus\ZZ)\lra \Gamma(U_\Xi,\M_Y)
\]
covers $f(p_i)$. Taking the composition with $\ol{f^\flat_{p_i}}$, yields a homomorphism of monoids
\[
(C(\trcone\Xi))^\vee\cap (M\oplus\ZZ)\lra \ol \M_{Y_0,f(p_i)}
\stackrel{\ol{f^\flat_{p_i}}}{\lra} \ol\M_{C,p_i}=\NN\oplus\NN.
\]
The two factors of $\NN\oplus\NN$ in $\ol\M_{C,p_i}$ are generated by the smoothing parameter $s_t$ and by the equation defining $p_i$ in $C$, respectively. Taking the composition with the projection to the second factor thus defines a homomorphism
\[
u_i:(C(\trcone\Xi))^\vee\cap (M\oplus\ZZ)\lra \NN,
\]
that is, an element of $C(\trcone\Xi)$. Since by definition this homomorphism maps $(0,1)\in M\oplus\ZZ$ to $0$, it lies in $(0,1)^\perp= N\oplus\{0\}\subset N\oplus\ZZ$. Note that, identifying $N\oplus\{0\}$ with $N$, the intersection of $C(\trcone\Xi)$ with $N_\RR\oplus\{0\}$ agrees with the asymptotic fan of $\trcone\RR$, endowed with the polyhedral decomposition $\trcone \P$. We have thus defined, for each marked point $p_i\in C$, an element $u_i\in N$, which is contained in the cone of the relevant cell of the asymptotic fan. The fact that $u_i$ records the contact orders follows basically from definitions -- see for instance \cite[pg. 12]{Utah}. 

To show the second part, given a non-complete component $C'\simeq\AA^1\subset C$, with generic point $\eta$, we define a point in $\partial \trcone \RR$ as follows. Let $\Xi\in\RR$ be the interval with the corresponding log chart containing $f(\eta)$. The composition with $\ol{f^\flat_\eta}$ defines a homomorphism
\begin{equation}
\label{phi-eta}
\phi_\eta: (C(\trcone \Xi))^\vee\cap (M\oplus\ZZ) \lra \ol\M_{Y,f(\eta)}
\stackrel{\ol{f^\flat_\eta}}{\lra} \ol\M_{C,\eta}=\NN.
\end{equation}
Since $f$ is a log morphism relative to the standard log point, $\phi_\eta(0,1)=1$. Thus viewed as an element of $N\oplus\ZZ$, we have $\phi_\eta\in N\oplus\{1\}$. 
\end{proof}
Let $f: C^{\dagger} \rightarrow {Y_0}^{\dagger}$ be a log coral and $C' \subset C$ be a non-complete irreducible component of $C$ with generic point $\eta$.
Let
\begin{equation}
\label{ueta}
u_\eta= \phi_\eta-(0,1)
\end{equation}
as an element of $N= N\oplus\{0\}\subset N\oplus\ZZ$, where $\phi_\eta$ is the map in \eqref{phi-eta}. We will define the degree by multiplying $u_\eta$ with the \emph{branching order at infinity}, which we discuss in a moment.
 The restriction ${f|}_{C'}: C' \to Y_0$ factors over an irreducible component $\mathbb{A}^1 \times \mathbb{P}^1 \subset Y_0$. Hence, we have a morphism $f':C' \rightarrow \mathbb{A}^1 \times \mathbb{P}^1$ satisfying
\begin{equation} 
\label{factoring}
\iota \circ f' = {f|}_{C'}
\end{equation}
where $\iota: \mathbb{A}^1 \hookrightarrow \mathbb{P}^1 \subset Y_0$ is the inclusion map. Let  
\begin{eqnarray}
\varphi&:=&{\pr}_1\circ f': C' \rightarrow \mathbb{A}^1
\nonumber\\
\phi&:=&{\pr}_2\circ f': C' \rightarrow \mathbb{P}^1
\nonumber
\end{eqnarray}
where $\pr_1$ and $\pr_2$ are the projection maps from $\AA^1\times \PP^1$ onto the first and second factors respectively. 
%\begin{displaymath}
 %   \xymatrix{
  %       &   &  \mathbb{A}^1 \\
   %     C' \ar[r]^{f'}   & \mathbb{A}^1 \times \mathbb{P}^1 \subset Y_0  \ar[ur]^{{\pr}_1} \ar[dr]^{{\pr}_2} & \\
    %    & & \mathbb{P}^1 }
%\end{displaymath}
The map $\varphi:\AA^1\rightarrow \AA^1$ is identically equal with $s\circ f|_{C'}$ as in \ref{log map}, so it is dominant. Thus, it extends uniquely to a morphism $\tilde{\varphi}:\PP^1\rightarrow \PP^1$ where the domain of $\tilde{\varphi}$ is the completion $\{ \infty_{C'} \} ~ \cup ~ C' \cong \PP^1$ of $C'$ along the point at infinity on $C'\cong \AA^1$. We compactify $\AA^1\times \PP^1 \subset Y_0$ to  $\PP^1\times\PP^1$ along \emph{the divisor at infinity} $D_{\infty}=\PP^1$, and refer to
\begin{equation}
\label{point at infinity}
q_{\infty}  =  \tilde{\varphi}(\infty_{C'}) \in D_{\infty}.
\nonumber
\end{equation}
as \emph{the point at infinity}. By Hurwitz's formula $\tilde{\varphi}:\PP^1\rightarrow \PP^1$ is a covering totally branched at infinity. The branching order at infinity is defined as follows.
\begin{definition}
\label{branching order}
Let $f:C^{\dagger}\to Y_0^{\dagger}$ be a log coral with a non-complete component $C'\subset C$ of generic point 
$\eta$ and let $\varphi=s\circ f|_{C'}$ be the map defined in Definition \ref{log map},$(iii)$. We say $C'$ has \emph{branching order} $w_\eta\in \NN\setminus \{0\}$ if the degree of the covering $\tilde{\varphi}:\PP^1\rightarrow \PP^1$ equals $w_\eta$. Given $C' \subset C^\dagger$, the branching order at infinity is obtained by pulling back $s\in \O(Y)$, to $C'$ and taking the negative of the valuation at the point at infinity, $\infty_{C'}$, of $C'$:
\begin{equation}
\label{weta}
w_\eta  =  -\operatorname{val}_\infty(f^\sharp(s))
\end{equation}
Note that $w_\eta\in\NN\setminus\{0\}$ since $f^\sharp(s)$ is a non-constant regular function on $C'\simeq \AA^1$.
\end{definition}
\begin{remark}
Let
\begin{eqnarray}
\upsilon: \Spec \CC[w] \setminus \{0\} & \to &  \Spec \CC[z] \setminus \{0\}
\nonumber \\
w & \mapsto & \frac{1}{z} 
\nonumber
\end{eqnarray}
and 
\begin{eqnarray}
\upsilon^*: \CC(z) & \to &   \CC(w)
\nonumber \\
r & \mapsto & r \circ \upsilon 
\nonumber
\end{eqnarray}
be the morphism on fraction fields induced by $\upsilon$. Let $\varphi_x$ and $\varphi_x^{\beta} / \varphi_y^{\alpha}$ be defined as in Remark \ref{varphi}. Then, the position of $q_{\infty} \in D_{\infty}$, referred to as the \emph{virtual position at infinity}, is given by
\begin{itemize}
\label{virtual position}
\item[(i)] $ \upsilon^*(\varphi_x)(0)$ if $f$ is transverse at $C'$
\item[(ii) ] $\upsilon^*(\varphi_x^{\beta} / \varphi_y^{\alpha})(0)$ if $f$ is non-transverse at $C'$
\end{itemize}
\end{remark}
Now, we can define our log constraints.
%\begin{definition}\label{coral degree}
%A \emph{coral degree} with $\ell+1$ positive and $m$ negative entries is a tuple 
%\[\Delta:=(\overline{\Delta},\underline{\Delta})\subset N^{\ell+1}\times N^m\]
%of elements in the lattice $N$, where $\overline{\Delta}=(\overline{\Delta}^0\ldots,\overline{\Delta}^\ell)$, and $\underline{\Delta}=(\underline{\Delta}^1\ldots,\underline{\Delta}^m)$, with
%\begin{align}\pr_2(\overline{\Delta}^i) & >  0
%\nonumber \\\pr_2(\underline{\Delta}^j) & < 0\nonumber\end{align}
%for all $i=0,\ldots,\ell$ and $j=1,\ldots,m$, and by $\pr_2:N= \ZZ \oplus \ZZ\to \ZZ$ we denote the projection onto the second factor.
%Let $\eta$ be a generic point of an irreducible component of $C$ and $x_i \in \bar{\eta}$ be one of the marked points, so that $x_i\in\{ x_1,\cdots,x_k \}$.\end{definition}
\begin{definition}
Let $\Delta$ be a degree as in Definition 
\ref{degree} with $\ell+1$ positive and $m$ negative entries.
A log coral $f:{C}^{\dagger}\to{Y_0}^{\dagger}$ with $\ell+1$ marked points 
\[p_0,p_1,\ldots,p_\ell\] 
and $m$ non-complete components 
\[C'_1,\cdots,C'_m\] 
is said to be of degree $\Delta$ if the following conditions hold 
\begin{enumerate}
\item[(i)] For each marked point $p_i$ of $C$, the composition 
\[\mathrm{pr}_2\circ \overline{f}^{\flat}_{p_i}:\overline{\M}_{{Y_0},f(p_i)}^{gp} \rightarrow \overline{\M}_{C,p_i}^{gp}\rightarrow \ZZ,\]
where $\mathrm{pr}_2$ is the projection map to the second factor defined as in \eqref{Pr for log}, equals $\overline{\Delta}^i$.
\item[(ii)] For each non-complete component $C_j'\subset C$ with generic point $\eta_j$, we have $w_{\eta_j}\cdot u_{\eta_j}$ equals $\underline{\Delta}^j$, where $w_{\eta_j}$ and $u_{\eta_j}$ are defined as in \eqref{ueta} and \eqref{weta} .
\end{enumerate}
\end{definition}
%===========================================================
 %Now to ensure finiteness of the number of morphisms $\bar{f}^{\flat}:f^{-1}\overline{\M}_{Y_0,f(p)}  \rightarrow  \overline{\M}_{C,p}$ on the level of ghost sheaves we need more constraints also on the $t$-powers. We define such constraints, on all but one of the marked points as follows.
%\end{example}

%\begin{definition}
%\label{degeneration order}
%Fix a coral degree $\Delta$ with $\ell+1$ positive and $m$ negative entries. A \emph{degeneration order for $\Delta$}is an $\ell$-tuple \[\lambda:=(\lambda^1,\cdots,\lambda^\ell) \in \prod_{i=1}^\ell N / (N \cap \RR \overline{\Delta}^i) \,.\]\end{definition}

%is an $(\ell+m)$-tuple 
%\[\lambda:=(\lambda^1,\cdots,\lambda^\ell,\lambda_1,\ldots,\lambda_m) \in \prod_{i=1}^\ell N / (N \cap \RR \overline{\Delta}^i) \times \prod_{i=1}^m N / (N \cap \RR \underline{\Delta}^j) \,.\]

\begin{definition}
\label{coral_match}
Let $\Delta$ be a degree as in Definition 
\ref{degree} with $\ell+1$ positive and $m$ negative entries. Let 
$\lambda$ be an asymptotic constraint for 
$\Delta$, as in Definition \ref{asymptotic constraint}.
We say that a log coral $f:{C}^{\dagger}\to {Y_0}^{\dagger}$ \emph{matches} $(\Delta,\lambda)$
if it is of degree $\Delta$, and denoting 
$p_0,\dots,p_\ell$ the marked points of $C$, 
and $\eta_0,\dots,\eta_\ell$ the generic points of the irreducible components $C_0,\dots,C_\ell$
of $C$ containing $p_0,\dots,p_\ell$,
%. Let $\{p_0,\ldots,p_\ell \}\subset C$ be the set of marked points on $C$, such that $p_j \in C_j$ for a complete component $C_j$ of $C$.
%Denote by $\eta_j$ the generic point of $C_j$.
%Consider all but one of the marked points, and let $\eta_j$ denote the generic point of $C_j$, for $j=1\ldots,\ell$. 
%Moreover, assume $C$ has $m$ non-complete irreducible components $C'_1,\ldots,C'_m$, and let $\eta'_j$ denote the generic point of $C'_j$. 
%We say a log coral $f:{C}^{\dagger}\to Y_0^{\dagger}$ of degree $\Delta$ has \emph{degeneration order} $\lambda$ if 
the image of 
\[(\bar{f}_{\eta_j}:\overline{\M}_{Y_0,f(\eta_j)}\rightarrow \overline{\M}_{C,\eta_j}\cong \NN) \in N \]
under the quotient map 
\[ N\rightarrow N / (N \cap \RR \overline{\Delta}^j)\] 
is equal to $\lambda^j$ for all $j=1,\ldots,\ell$.
%and the image of \[(\bar{f}_{\eta'_j}:\overline{\M}_{Y_0,f(\eta'_j)}\rightarrow \overline{\M}_{C,\eta'_j}\cong \NN) \in N \]under the quotient map \[ N\rightarrow N / (N \cap \RR \overline{\Delta}^j)\] is equal to $\lambda_j$ for all $j=1,\ldots,m$.
\end{definition}
\begin{remark}
An asymptotic constraint 
$\lambda$ constrains the marked points 
$p_1,\dots,p_\ell$, but not the marked point 
$p_0$.
%The degeneration order concerns the marked points $p_1, \dots, p_\ell$, that is, all the marked points except $p_0$.
\end{remark}

\begin{discussion}
\label{DiscussionOfLogConstraints}
Let $\Delta$ be a degree
with 
$\ell+1$ positive and $m$ negative entries, 
$\lambda$ an asymptotic constraint for 
$\Delta$, and $f:C^{\dagger} \to Y_0^{\dagger}$ a log coral matching $(\Delta,\lambda)$.
%Fix the coral degree, defined as in Definition \ref{coral degree} and the degeneration order $\lambda$ defined as in Definition \ref{degeneration order}. 
%Let $u\in N$ be an asymptotic direction, which is one of the entries of the coral degree. 
For every $1\leq i\leq \ell$, let $C_i \subset C$ be the complete irreducible component containing the marked point 
$p_i$. Let $\tilde{\lambda}_i\in N_\QQ$ be a lift of $\lambda_i$ under the quotient homomorphism $N_\QQ\to N_\QQ/\QQ\cdot \overline{\Delta}^i$. Then, the pair $(\overline{\Delta}^i,\lambda_i)$ defines the two-plane
\[
H_{\overline{\Delta}^i,\lambda_i}= \RR\cdot(\overline{\Delta}^i,0)+ \RR\cdot (\tilde{\lambda}_i,1)\subset N_\RR\oplus\RR.
\]
Let $m_{\overline{\Delta}^i,\lambda_i}$ span the rank one subgroup $H_{\overline{\Delta}^i,\lambda_i}^\perp \cap (M\oplus\ZZ)$ of $M\oplus\ZZ$. To fix signs we ask $m_{\overline{\Delta}^i,\lambda_i}$ to evaluate positively on $(1,0,0)$. The rational function $z^{m_{\overline{\Delta}^i,\lambda_i}}$ on $Y$ induces a section, denoted by
\[s_{\overline{\Delta}^i,\lambda_i}\in\Gamma(Y_0,\M_{Y_0}^\gp).\]
Since $m_{\overline{\Delta}^i,\lambda_i}$ spans $H_{\overline{\Delta}^i,\lambda_i}^\perp \cap (M\oplus\ZZ)$, we have $f^\flat_p(s_{\overline{\Delta}^i,\lambda_i}) \in \O_{C,p}^\times$. It thus makes sense to evaluate at $p_i$ to obtain a non-zero complex number
\[
f^\flat_p(s_{\overline{\Delta}^i,\lambda_i})(p_i)\in \CC^\times\,.
\]
Similarly, for every 
$1\leq j \leq m$, we consider the non-complete component $C'_j\subset C$, which determines a point $v_j^-\in \partial \trcone \RR$ by Proposition \ref{Prop:contact order}. Then, we take the two-plane spanned by $(\underline{\Delta}^j,0)$ and $(v_j^-,1)$. 
%for $u,v \in N$, where $u$ is the vector emanating from $v$ in the direction of the origin in $N_{\RR}$. 
Analogously, as above we denote the monomial obtained from the pair $(\underline{\Delta}_j,v_j^-)$ by $m_{\underline{\Delta}_j,v_j^-}$ and the corresponding section by $s_{\underline{\Delta}_j,v_j^-}$. Now a log coral $f:C^{\dagger}\to Y_0^{\dagger}$ with a non-complete component $C'_j\subset C$ with generic point corresponding to $v_j^-$ is parallel,  as in Definition \ref{parallel}, if and only if $f^\flat(s_{\underline{\Delta}_j,v_j^-})$ extends over the point at infinity of $C'_j$. In this case we can thus obtain a well-defined complex number as
\[
f^\flat_p(s_{\underline{\Delta}_j,v_j^-})(\infty_{C'_j})\in \CC^\times\,.
\]
Indeed, $f^\flat_p(s_{\underline{\Delta}_j,v_j^-})=\varphi_x$ if $f$ is transverse at $C'_j$ and $f^\flat_p(s_{\underline{\Delta}_j,v_j^-})=\varphi_{x^{\beta}/y^{\alpha}}$ if $f$ is non-transverse at $C'$ where $\varphi_x$ and $\varphi_{x^{\beta}/y^{\alpha}}$ are defined as in Remark \ref{varphi} respectively. 
\end{discussion}
\begin{definition}
\label{log constraint}
We call a tuple 
\[\rho=(\overline{\rho}^{\ell},\underline{\rho}^m) \subset {(\CC^{\times})}^{\ell}\times {(\CC^{\times})}^{m}\] 
a \emph{log constraint of order $(\ell,m)$}. A log coral $f: C^{\dagger}\to \overline{Y_0}^{\dagger}$ with $\ell+1$ marked points and $m$ non-complete components, for $\ell,m\in \NN\setminus \{0\}$ matches a log constraint of order $(l,m)$ if the following holds. 
\begin{itemize}
\item[(i)] $C$ has $\ell+1$ marked points $p_0,\ldots,p_{\ell}$ such that \[f^\flat_{p_j}(s_{\overline{\Delta}^i,\lambda_i})(p_i)=\overline{\rho_i}\] for $1\leq i\leq \ell$, that is for all but one marked points.
\item[(ii)] $C$ has $m$ non-complete components $C'_1,\ldots,C'_m$ with generic points $\eta_j$ and 
\[f^\flat_{\eta_j}(s_{\underline{\Delta}_j,v_j^-})(\infty_{C'_j})=\underline{\rho}_j\,\] for $1\leq j\leq m$, where $\infty_{C_j'}$ is the point at infinity on $C'_j$.
\end{itemize}
where $s_{\overline{\Delta}^i,\lambda_i}$ and $s_{\underline{\Delta}_j,v_j^-}$ are defined as in Discussion \ref{DiscussionOfLogConstraints}.
\end{definition}
Note that the degree and the 
asymptotic constraint for log corals can be interpreted tropically as the tropical degree and asymptotic constraint for tropical corals. This will be obvious once we explain tropicalizations of log corals as tropical corals
in \S \ref{tropicalizations}. To get a well defined count, we assume we fix the degeneration order of log corals, so that after tropicalizing the corresponding tropical corals match asymptotic constraints that are inside the stable range defined in Definition \ref{stable range}.  
\begin{definition}
\label{log count}
Let $\Delta$ a degree with $\ell+1$ positive and $m$ negative entries, $\lambda$ an asymptotic constraint for $\Delta$ and $\rho$ a log constraint of order $(\ell,m)$. We say that a log coral $f:C^{\dagger}\to Y_0^{\dagger}$
\emph{matches} $(\Delta,\lambda,\rho)$
if it matches $(\Delta,\lambda)$ as in Definition \ref{coral_match} and $\rho$ as in Definition \ref{log constraint}.

Let $\mathfrak{L}_{\Delta,\lambda,\rho}$ be the set of all log corals $f:C^{\dagger}\to Y_0^{\dagger}$ matching 
$(\Delta,\lambda,\rho)$. Define $N^{log}_{\Delta,\lambda,\rho}$ to be the cardinality of $\mathfrak{L}_{\Delta,\lambda,\rho}$:
\begin{equation}\label{log_curves_count}
N^{log}_{\Delta,\lambda,\rho}:= |\mathfrak{L}_{\Delta,\lambda,\rho}|  \,.  
\end{equation}
\end{definition}
If $N^{log}_{\Delta,\lambda,\rho}$ is finite this defines a number giving us a count of log corals. In \ref{main theorem} we will see that, indeed it is a finite number which equals the tropical count defined in \ref{The count of tropical corals}.
\begin{remark}
A priori, $\mathfrak{L}_{\Delta,\lambda,\rho}$ has the structure of a stack, and it is a closed substack of a larger algebraic stack of log maps with some non-complete components. In the present case we are in the comfortable situation that due to unobstructedness of deformations, $\mathfrak{L}_{\Delta,\lambda,\rho}$ turns out to be a reduced scheme over $\CC$ of finite length, hence really does not carry more information than the underlying set.
\end{remark}
%===========================================================

\section{Tropicalizations of Log Corals}
\label{tropicalizations}
%In \cite{logGW}, \S1.4 it was shown how a stable log map over a standard log point defines a two-dimensional cone complex in the tropicalization of the target. If the target is a toric variety this map can is naturally a cone over a tropical curve. For the readers convenience and to adress the issues concerning non-completeness we first review this construction in the present case. 
In this section we prove that the tropicalization of a log coral is a tropical coral. We first recall the associated tropical space to a log scheme constructed as in \cite[Appendix B]{logGW}.
%, and using a generalisation of this construction, we show a log coral $f:C^{\dagger}\to Y_0^{\dagger}$ induces a map \[f^{\mathrm{trop}}:\mathrm{Trop}(C^{\dagger}) \to \mathrm{Trop}(Y_0^{\dagger})\]  between the associated tropical spaces. We refer to this map as the \emph{tropicalization of $f$}.

%The substantial point in this statement is that $f$ being parallel implies the balancing condition also on the vertices mapping to the boundary of the truncated cone $\trcone \RR$. We assume $\trcone \RR$ is endowed with a polyhedral decomposition $\trcone \P_b$ as in Definition \ref{truncated cone} for a fixed $b\in \NN\setminus \{0\}$. We often suppress $\trcone \P_b$ in the notation when referring to $\trcone \RR$. 
\begin{definition}
\label{tropical space}
Let $X^{\dagger}$ be a log scheme endowed with the Zariski topology. The tropical space associated to $X^{\dagger}$ denoted by $\mathrm{Trop}(X)$ referred to as the \emph{tropicalization of $X^{\dagger}$} is defined as
\[
\Trop(X):=\Bigg(\coprod_{ x\in \ul X} \Hom(\ol\M_{X, x},
\RR_{\ge 0})\Bigg)\bigg/\sim
\]
where the disjoint union is over all scheme-theoretic points $x$ of
$\ul X$ and the equivalence relation is generated by the
identifications of faces given by dualizing generization maps
$\ol\M_{X, x} \to\ol\M_{X, x'}$ when $x$ is a specialization of $x'$ \cite[Appendix B]{logGW}.
One then obtains for each $ x$ a map
\[
i_x:\Hom(\ol\M_{X, x},\RR_{\ge 0})\to \Trop(X)\,,
\]
which is injective since $\ol\M_X$ is fine in the Zariski topology.
\end{definition}
The tropicalization of log schemes is functorial.
Indeed, if $f \colon X_1^{\dagger} \rightarrow X_2^{\dagger}$
is a map of log schemes, then for every 
closed point $x \in X_1$, the map $\bar{f}^{\flat}_x:\overline\M_{X_2,f(x)} \rightarrow\overline\M_{X_1,x}$ induces a morphism $\Hom(\overline\M_{X_1,x},\RR_{\ge 0}) \rightarrow
\Hom(\overline\M_{X_2,f(x)},\RR_{\ge 0})$. 
These morphisms are compatible with the equivalence relations defining $\Trop(X_1)$ and $\Trop(X_2)$, and so we obtain a morphism
\begin{equation}
%\label{trop functor}
f^{\mathrm{trop}}: \mathrm{Trop}(X_1) \to \mathrm{Trop}(X_2)
\nonumber
\end{equation}
referred to as the tropicalization of $f$.

Now, we restrict our attention to the current case of interest in this paper, and consider $Y_0$, the fibre over $t=0$ of the degeneration of the unfolded Tate curve $Y\to\Spec \CC[s,t]$,
and a log coral $f: C^{\dagger} \rightarrow {Y_0}^{\dagger}$.
By functoriality of the tropicalization applied to the 
the commutative diagram in Definition \ref{log map}, and using that
$\mathrm{Trop}({\Spec\CC}^{\dagger})= \RR_{\geq 0}$
by Definition \ref{tropical space}, we obtain a commutative diagram \begin{equation} \label{diag}
\xymatrix@C=30pt
{
\mathrm{Trop}(C) \ar[r]^{f^{\mathrm{trop}}}\ar[rd]_{h_C}
&\mathrm{Trop}(Y_0) \ar[d]^{h_{Y_0}}\\
&\RR_{\geq 0}
}
\end{equation}

We describe below $\mathrm{Trop}(C)$
and $\mathrm{Trop}(Y_0)$ explicitly.
First, define the \emph{dual graph $\Gamma$ of C} as follows.
\begin{construction}
\label{dual graph}
\begin{itemize}
\item[(i)] Define the set of vertices $V(\Gamma)$ so that there is a vertex $v_j \in V(\Gamma)$ for each irreducible component $C_j\subset C$. We denote by $V^-(\Gamma) \subset 
V(\Gamma)$ the set of vertices $v$ such that the corresponding component $C_v$ is non-complete.
\item[(ii)] Define the set of bounded edges $E^b(\Gamma)$ so that an edge $e_{ij}\in E^b(\Gamma)$ connecting two vertices $v_i$ and $v_j$ exists if and only if the corresponding irreducible components $C_i\subset C$ and $C_j\subset C$ intersect.
\item[(iii)] Define the set of unbounded edges denoted by $E^{+}(\Gamma)$ so that for each edge $e_j\in E^{+}(\Gamma)$ adjacent to a vertex $v_j\in V(\Gamma)$ there exist a marked point $p_j\in C_j\subset C$ on the irreducible component $C_j$ corresponding to $v_j$.
\end{itemize} 
Note that the dual graph $\Gamma$ of $C$ has a natural structure of coral graph as in Definition 
\ref{Def: coral graph}, where $E^+(\Gamma)$ is the set of positive edges and $V^-(\Gamma)$ is the set of negative vertices.
%We refer to a vertex $v\in V(\mathcal{G})$ a negative vertex if $v\in \partial \trcone \RR$ and in this case the corresponding component $C_v\subset C$ is non-complete.
\end{construction}

\begin{proposition} \label{prop_trop_C}
Let $f:C^{\dagger} \to Y_0^{\dagger}$ be a log coral. Then, the tropicalization $\mathrm{Trop}(C)$ of the domain $C^{\dagger}$ is the cone $C(\Gamma)$ over the dual graph $\Gamma$ of $C$. 
\end{proposition}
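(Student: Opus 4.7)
The plan is to unwind the definition of $\Trop(C)$ using the local structure of a log smooth curve (Theorem~\ref{Thm: structure of log curves}), compute $\Hom(\overline{\M}_{C,x},\RR_{\ge 0})$ at each (scheme-theoretic) point $x\in C$, and then identify the resulting gluing diagram with the description of $C(\mathcal{G})$ coming from Construction~\ref{dual graph}.

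First I would enumerate the three types of points on $C$ and compute the dual cones. By Theorem~\ref{Thm: structure of log curves}, the stalks of $\overline{\M}_C$ are
\[
\overline{\M}_{C,\eta}=\NN,\qquad
\overline{\M}_{C,p}=\NN\oplus\NN,\qquad
\overline{\M}_{C,q}=\NN\oplus_{\NN}\NN^2
\]
at a generic point $\eta$ of an irreducible component, at a marked point $p$, and at a node $q$, respectively. Applying $\Hom(-,\RR_{\ge 0})$ turns these into a ray $\RR_{\ge 0}$, the quadrant $\RR_{\ge 0}^2$, and the two-dimensional cone $C\big([0,\rho_q]\big)$ over an interval of integral length $\rho_q$, respectively. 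In each case one of the factors corresponds (via the map of charts in Theorem~\ref{Thm: structure of log curves}) to the pullback from $\Spec\CC^{\dagger}$, i.e.\ to the direction of the cone over the whole graph.

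Next I would identify the equivalence relation generated by generization. If $x=\eta$ is the generic point of a component $C_j$ and $x'$ is a special point on $C_j$ (a marked point or a node), then the generization map $\overline{\M}_{C,x'}\to\overline{\M}_{C,\eta}$ dualizes to an embedding of $\RR_{\ge 0}$ as a boundary ray of the two-dimensional cone attached to $x'$. Thus the quotient $\bigsqcup_x\Hom(\overline{\M}_{C,x},\RR_{\ge 0})/\!\sim$ is obtained by taking one ray $\RR_{\ge 0}\cdot v_j$ for each component $C_j$ and gluing along these rays: one two-dimensional cone $C\big([0,\rho_q]\big)$ connecting the rays $\RR_{\ge 0}\cdot v_i$ and $\RR_{\ge 0}\cdot v_j$ for each node $q$ between $C_i$ and $C_j$, and one two-dimensional cone $\RR_{\ge 0}^2$ emanating from $\RR_{\ge 0}\cdot v_j$ for each marked point on $C_j$ (giving an unbounded edge, since the second factor has no specialization identifications). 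The situation for a non-complete component $C'\cong\AA^1$ is identical from the log-structural viewpoint: its generic point contributes a ray and its unique special node is glued as above.

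Finally I would compare this with $C(\mathcal{G})$ as in the definition of the cone over a polyhedral complex (cf.~\eqref{cone over Xi}). By Construction~\ref{dual graph}, $\mathcal{G}$ has a vertex $v_j$ for each irreducible component $C_j$, a bounded edge $e_{ij}$ for each node and an unbounded edge for each marked point. Hence $C(\mathcal{G})$ is a ray $\RR_{\ge 0}\cdot(v_j,1)$ for each $v_j$, a two-dimensional cone over the edge $e_{ij}$ joining two such rays, and a two-dimensional cone over each unbounded edge. The bijection sending $v_j\in V(\mathcal{G})$ to the ray $\Hom(\overline{\M}_{C,\eta_j},\RR_{\ge 0})$, each node $q\mapsto\Hom(\overline{\M}_{C,q},\RR_{\ge 0})$, and each marked point $p\mapsto\Hom(\overline{\M}_{C,p},\RR_{\ge 0})$ assembles, by the face identifications of the previous paragraph, into a well-defined isomorphism $C(\mathcal{G})\stackrel{\sim}{\to}\Trop(C)$. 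The only point requiring care (and which I expect to be the main bookkeeping obstacle) is to check that the integral lengths $\rho_q$ of the node cones and the height parameter of the cone over $\mathcal{G}$ agree after rescaling, i.e.\ that the image of $(0,1)\in\NN\oplus_{\NN}\NN^2$ under the chart of Theorem~\ref{Thm: structure of log curves}(iii) is compatible with the height function on $C(\mathcal{G})$; this follows from the observation that the map $\NN\to\NN$, $1\mapsto\rho_q$, in Theorem~\ref{Thm: structure of log curves}(iii) is exactly the one encoding the length of the edge $e_{ij}$ in $\mathcal{G}$.
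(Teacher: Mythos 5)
Your proposal is correct and follows essentially the same route as the paper: both unwind Definition~\ref{tropical space} using the three local models of Theorem~\ref{Thm: structure of log curves}, matching generic points to vertices, nodes to bounded edges (with the integral length $\rho_q$ recorded by the length function), and marked points to unbounded edges; the paper simply states this correspondence more tersely while you spell out the dual cones and generization gluings explicitly.
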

\begin{proof}
Recall from Theorem \ref{Thm: structure of log curves} that there are three possibilities of closed points $x\in C$, with the stalk $\overline{\M}_{C,x}$ is either $\NN$, $\NN\oplus\NN$ or $\NN\oplus_\NN \NN^2$, for $x$ a generic point, a marked point or a node, respectively. Then, the vertices of $\mathcal{G}$ correspond to generic points $\eta$, unbounded edges (or the flags of the graph $\mathcal{G}$) correspond to the marked points and bounded edges correspond to nodes. So, the result is a direct consequence of the Definition \ref{tropical space}. 
\end{proof}
%Note that there is a length function $\ell: E^{b}(\mathcal{G})\to \ZZ$ defined as follows. Let $e_q \in E^b(\mathcal{G})$ be a bounded edge such that $e_q$ corresponds to the node $q\in C$ with
%\[ \overline{\M}_{C,q} = \NN\oplus_\NN \NN^2 =S_e \]
%where $S_e$ is the monoid obtained by the push-out of the diagonal map $\NN \to \NN^2$ and the the homothety $\NN \xrightarrow{\cdot e} \NN$ for $e\in \NN\setminus \{0\}$. Define
%\begin{eqnarray}
%\ell: E^{b}(\mathcal{G}) &   \rightarrow & \ZZ
%\nonumber \\
%e_q & \longmapsto & e
%\nonumber
%\end{eqnarray}
%Now, we assign weights to edges of the dual graph $\mathcal{G}$ of the domain of a log coral $f:C^{\dagger}\to Y_0^{\dagger}$ as follows. For each bounded edge $e \in E^b(\mathcal{G})$ define 
%\[w(e)=\ell(e)\]
%For each unbounded edge $e_p \in E^b(\mathcal{G})$, let $p\in C$ denote the corresponding marked point. By definition of a log coral $f(p)$ maps to the toric boundary of a toric strate in $Y_0$. Assume $e_p$ is adjacent to a vertex $v$ corresponding to the irreducible component $C_v\subset C$. Then the marked point $p\in C_v$. Assign the weight $w(e_p)$ to $e_p$ such that $w(e_p)$ is the branching order of $f(C_v)$ at $f(v)$ with the toric boundary. 
\begin{proposition} \label{prop_trop_Y}
Let $Y\to\Spec \CC[s,t]$ be the degeneration of the unfolded Tate curve, and $Y_0$ be the central fiber over $t=0$. Then,
\begin{itemize} 
\item[(i)]The tropicalization $\mathrm{Trop}(Y)$ of $Y$ is the cone $C(\trcone \RR)$ over the truncated cone $\trcone \RR$ endowed with the polyhedral decomposition $\trcone \P_b$.
\item[(ii)] $ \mathrm{Trop}(Y)=\mathrm{Trop}(Y_0) $
\end{itemize}
\end{proposition}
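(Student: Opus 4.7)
The plan is to unpack both parts from the standard description of tropicalizations of toric and toric-divisorial log schemes, and to exhibit the stratum-by-stratum identifications induced by Proposition~\ref{toric charts and ghost sheaves}.

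For part (i), I would use that $Y$ is the toric variety attached to the fan $\widetilde{\Sigma}_{\trcone \P_b}$ and that the log structure $\M_Y$ is divisorial with divisor $\tilde D=\tilde\pi^{-1}(st=0)$; because the equation $xy=(st)^b$ forces the toric divisors $\{x=0\}$ and $\{y=0\}$ to sit inside $\tilde D$, the log structure $\M_Y$ coincides with the toric log structure. So for each cone $\sigma=C(\trcone\Xi)\in\widetilde{\Sigma}_{\trcone\P_b}$ and each $y$ in the associated torus orbit, Proposition~\ref{toric charts and ghost sheaves} gives
\[
\overline{\M}_{Y,y}\;\simeq\;\sigma^\vee\cap(M\oplus\ZZ)\big/\sigma^\perp\cap(M\oplus\ZZ),
\]
and applying $\Hom(-,\RR_{\geq 0})$ reconstructs the relative interior of $\sigma$. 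A specialization $y\leadsto y'$ inside orbits dualizes to a face inclusion of cones, and the quotient in the definition of $\Trop(Y)$ glues the relative interiors together into $|\widetilde{\Sigma}_{\trcone\P_b}|=\bigcup_{\trcone\Xi\in\trcone\P_b}C(\trcone\Xi)=C(\trcone\RR)$. The polyhedral structure on $C(\trcone\RR)$ inherited from the fan is exactly the (homogeneous extension of the) polyhedral decomposition $\trcone\P_b$ at height one, which gives the asserted identification.

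For part (ii), I would first observe that the closed immersion $\iota:Y_0\hookrightarrow Y$ is strict: the divisorial log structure on $Y$ pulls back to the divisorial log structure on $Y_0$ attached to $\tilde D\cap Y_0$, so $\iota^{-1}\overline{\M}_Y=\overline{\M}_{Y_0}$. This gives a canonical injection $\Trop(Y_0)\hookrightarrow\Trop(Y)$ by definition of $\Trop(-)$. For surjectivity, I would show that every scheme-theoretic point $y\in Y$ specializes to some $y'\in Y_0$. Indeed the image of $\overline{\{y\}}$ in $\Spec\CC[s,t]$ is an irreducible closed subset, hence one of $\{(0,0)\}$, $\{s=0\}$, $\{t=0\}$, $\{st=0\}$ or $\AA^2$; in every case it meets $\{t=0\}$, and since $\tilde\pi$ is surjective onto its image, $\overline{\{y\}}\cap Y_0\neq\emptyset$. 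Picking such a $y'$, the generization map $\overline{\M}_{Y,y'}\to\overline{\M}_{Y,y}$ is by standard fs-log theory the localization along a face, and its dual $\Hom(\overline{\M}_{Y,y},\RR_{\geq 0})\hookrightarrow\Hom(\overline{\M}_{Y,y'},\RR_{\geq 0})$ realizes the cone attached to $y$ as a face of the cone attached to $y'$. Consequently the image of $y$ in $\Trop(Y)$ is already contained in the image of the cone at $y'$, which lies in $\Trop(Y_0)$.

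The only real subtlety is the fact that the generization maps between stalks of $\overline{\M}_Y$ factor through faces; this is standard but must be invoked cleanly, as it is the mechanism that allows cones attached to points outside $Y_0$ to be swallowed by cones attached to points of $Y_0$. Once this is in place, the two parts together assemble into the desired identification $\Trop(Y)=\Trop(Y_0)=C(\trcone\RR)$ with its polyhedral decomposition $\trcone\P_b$.
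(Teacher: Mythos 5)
Part (i) of your argument is fine and is essentially the paper's own proof: the divisorial log structure with divisor $\tilde D=\tilde\pi^{-1}(st=0)$ is the toric log structure (every ray generator of $\widetilde{\Sigma}_{\trcone\P_b}$ pairs positively with $(0,1,0)$, which is your chart-level observation that $xy=(st)^b$ forces $\{x=0\},\{y=0\}\subset\tilde D$), the ghost stalks are constant along orbits and computed by Proposition~\ref{toric charts and ghost sheaves}, and dualizing plus the generization identifications in Definition~\ref{tropical space} assembles $|\widetilde{\Sigma}_{\trcone\P_b}|=C(\trcone\RR)$. (Minor point: $\Hom(\ol\M_{Y,y},\RR_{\geq0})$ is the whole closed cone $C(\trcone\Xi)$, not its relative interior; the gluing takes care of the faces.)

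Part (ii) has a genuine gap. First, a misstatement: the pullback of $\M_Y$ to $Y_0$ is \emph{not} the divisorial log structure of $\tilde D\cap Y_0$ inside $Y_0$ --- at a generic point $p_1$ of $Y_0$ the stalk $\ol\M_{Y_0,p_1}=\langle\ol t\rangle\cong\NN$ is nontrivial, whereas the divisorial log structure of $\tilde D\cap Y_0\subset Y_0$ would be trivial there. Strictness of $Y_0\hookrightarrow Y$ is nevertheless true, simply because the paper \emph{defines} $\M_{Y_0}$ by restricting the charts of $\M_Y$ to $t=0$. More seriously, your surjectivity step fails as written: it is false that every scheme-theoretic point of $Y$ specializes into $Y_0$ (a closed point of the big torus with $t\neq0$ is its own closure and misses $Y_0$), and your classification of possible images in $\AA^2$ is wrong --- the image of $\ol{\{y\}}$ is the closure of the single point $\tilde\pi(y)$, which can be any irreducible closed subset, e.g.\ the closed point $(1,1)$ or the curve $\{s=1\}$, neither of which meets $\{t=0\}$ (and $\{st=0\}$ is not irreducible anyway). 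The correct mechanism, which is what the paper's terse argument intends, is stratum-wise: since $\ol\M_Y$ is constant along torus orbits, the cone attached to $y$ is identified in $\Trop(Y)$ with the cone attached to the generic point of its orbit $O(\sigma)$; every $\sigma$ is a face of some maximal cone $C(\trcone\Xi)$, whose closed orbit lies in $\ol{O(\sigma)}$ and is contained in $Y_0$ because $(0,0,1)\in C(\trcone\Xi)^\vee\setminus C(\trcone\Xi)^\perp$, i.e.\ $t$ vanishes on it; the dualized generization map then exhibits $\sigma$ as a face of the cone attached to that point of $Y_0$. With that replacement (points of $Y\setminus Y_0$ with trivial stalk contribute only the apex and need no argument), your proof of (ii) goes through and agrees with the paper's.
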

\begin{proof}
Recall from \S \ref{The unfolded Tate curve} that $Y$ is the toric variety associated to $(\trcone \P_b,\trcone \RR)$. Since the log structure on Y is fine and constant along any open toric strata it follows from Definition \ref{tropical space} that $\mathrm{Trop}(Y)$ is the cone over the fan associated to $Y$. Note that this holds for any toric variety, hence in particular for $Y$. This proves $(i)$. To show $(ii)$, observe that, given any closed point $x\in Y$, we have $\overline{x}\cap Y_0 \neq \emptyset$. Hence, $(ii)$ follows. 
\end{proof}

Let $f:C^{\dagger} \to Y_0^{\dagger}$ be a log coral.
As $\mathrm{Trop}(C)=C(\Gamma)$ by Proposition \ref{prop_trop_C}
and $\mathrm{Trop}(Y_0)=C(\trcone \RR)$
by Proposition \ref{prop_trop_Y}, the restriction over 
$1 \in \RR_{\geq 0}$ of $f^{\mathrm{trop}}$ in the diagram 
\ref{diag}
defines a map 
\begin{equation}
\label{eq_h} h \colon \Gamma \rightarrow \trcone \RR\,.
\end{equation}

We also define a weight function $w$ on $\Gamma$ as follows.
\begin{construction}
\label{weight function}
\begin{itemize}
\item[(i)] Let $e_q \in E^b(\Gamma)$ be a bounded edge corresponding to a node $q\in C$. By Theorem 
\ref{Thm: structure of log curves}(iii), the log structure of $C$ at $q$ is determined by a positive integer $\rho_q$.
Let $v_1, v_2\in V(\Gamma)$ be the two vertices of $e_q$. 
Then, there exists $u_q \in N$ such that $h(v_1)-h(v_2)=\rho_q u_q$, and we define the \emph{weight} $w_\Gamma(e_q)$ as the divisibility 
of $u_q$ in $N$, that is the biggest positive integer such that $u_q/w_\Gamma(e_q) 
\in N$ (for details, see \cite[Section $1.4$]{logGW}).

%Now, we assign weights to edges of the dual graph $\mathcal{G}$ of the domain of a log coral $f:C^{\dagger}\to Y_0^{\dagger}$ as follows. For each bounded edge $e \in E^b(\mathcal{G})$ define 
%\[w(e)=\ell(e)\]
%For each unbounded edge $e_p \in E^b(\mathcal{G})$, let $p\in C$ denote the corresponding marked point. By definition of a log coral $f(p)$ maps to the toric boundary of a toric strate in $Y_0$. Assume $e_p$ is adjacent to a vertex $v$ corresponding to the irreducible component $C_v\subset C$. Then the marked point $p\in C_v$. Assign the weight $w(e_p)$ to $e_p$ such that $w(e_p)$ is the branching order of $f(C_v)$ at $f(v)$ with the toric boundary.
\item[(ii)] Let $e_p \in E^+(\Gamma)$ be an unbounded edge corresponding to a marked point $p \in C$. 
Let $u_p \in N$ be the corresponding element given by 
Proposition
\ref{Prop:contact order} and recording the 
logarithmic contact order of $p$ with $Y_0$.
Then, we define the \emph{weight} $w_\Gamma(e_p)$ as the divisibility 
of $u_p$ in $N$, that is the biggest positive integer such that $u_p/w_\Gamma(e_p) \in N$. 
\end{itemize}
\end{construction}
%\begin{lemma}\label{heigh lemma}Let $f: C^{\dagger} \rightarrow {Y_0}^{\dagger}$ be a log coral with associated tropicalization map $f^{\mathrm{trop}}:C(\mathcal{G}) \rightarrow C(\trcone \RR) $. Then, there are height functions $h_C :C(\mathcal{G}) \rightarrow \RR_{\geq 0}$ and  $h_{Y_0} :C(\trcone \RR) \rightarrow \RR_{\geq 0}$ fitting into the following commutative diagram.
%\begin{equation} \nonumber
%\xymatrix@C=30pt
%{
%C(\mathcal{G}) \ar[r]^{f^{\mathrm{trop}}}\ar[rd]_{h_C}
%&C(\trcone \RR) \ar[d]^{h_{Y_0}}\\
%&\RR_{\geq 0}
%}
%\nonumber
%\end{equation}
%\end{lemma}
%\begin{proof}
%Let ${\Spec\CC}^{\dagger}:=(\mathrm{Spec}~\mathbb{\CC},\mathbb{\NN}\oplus \CC)$ be the standard log pont. By Definition \ref{tropical space}, it follows that the tropicalization of ${\Spec\CC}^{\dagger}$ is
%\[ \mathrm{Trop}({\Spec\CC}^{\dagger})= \RR_{\geq 0}  \]
%Hence, the result follows by the functoriality of the tropicalization map and the commutativity of the diagram in Definition \ref{log map}.\end{proof}

Let $f:C^{\dagger} \to Y_0^{\dagger}$ be a log coral. 
Then, we refer to the map $h \colon \Gamma \rightarrow \trcone \RR$
in \eqref{eq_h} as the \emph{tropicalization of the log coral $f:C^{\dagger} \to Y_0^{\dagger}$}, where $\Gamma$ is the coral graph defined as dual graph of $C$ in Construction \ref{dual graph}, and where we view $\Gamma$ endowed with the weight function $w$ defined in Construction \ref{weight function}.
Now, we are ready to prove the main theorem of this section.
\begin{theorem}
\label{Log curves define tropical corals}
The tropicalization $h:\Gamma\to \trcone \RR$
of a log coral $f:C^{\dagger} \to Y_0^{\dagger}$ is a tropical coral.
\end{theorem}
\begin{proof}
In order to show that $h:\Gamma\to \trcone \RR$
satisfies the Definition
\ref{parameterized tropical coral} of a tropical coral, 
it remains to check the balancing condition at each vertex of 
$\Gamma$.
%By restricting $f^{\mathrm{trop}}:C(\mathcal{G})\to C(\trcone \RR)$ to $h_C^{-1}(1)$ in Lemma \ref{heigh lemma}, we obtain a tropical coral $h:\Gamma\to \trcone \RR$. 
The balancing condition at the interior vertices follows from the balancing result of \cite[Section 1.4]{logGW}. It remains to prove the balancing condition for the negative vertices of 
$\Gamma$, mapped by $h$ to the boundary $\partial \trcone \RR$ of the truncated cone $\trcone \RR$. 
Let $v \in V^{-}(\Gamma)$ a negative vertex, and $C' \subset C$
the corresponding non-proper irreducible component. We have 
$h(v) \in \partial \trcone \RR$.
%Let $f:C^{\dagger}\to Y_0^{\dagger}$ be a log map such that $C$ has an unbounded irreducible component $C'\subset C$ with generic point $\eta$ and let $v\in \partial \trcone \RR$ be the vertex corresponding to $C'$. 
Define the quotient map
\[ \pi_v:N\rightarrow N~/~(\RR\cdot h(v)\cap N)\,.  \] 
So, $\pi_v$ gives an element $m_v\in \Hom(N,\ZZ)=M$. Define 
\[ x_v:=z^{m_v} \in \CC[M]\,. \]
Observe that $f^*(x_v)=f^*(x)$ if $f$ is transverse at $C'$ and $f^*(x_v)=f^*(x^{\beta})/f^*(y^{\alpha})$ if $f$ is non-transverse at $C'$ where $x$ and $\alpha,\beta$ are as in Remark \ref{varphi}. In any case, we have
\[ \sum_{z\in C'} val_z (f^*x_v) = \sum_{z\in \PP^1 \setminus \{ \infty \}} val_z (f^*x_v) = 0 \]
since from the \emph{parallelness} condition
in Definition \ref{parallel} we know $val_{\infty} (f^*x_v)=0$ and the sum of orders of poles and zeroes of the rational function $f^*x_v$ on $\PP^1$ is zero.

Let $z_1,\ldots,z_k\in C'$ be the special points and let
$\xi_1,\ldots,\xi_k$ be the direction vectors (with weights)
of the corresponding edges emanating from $v$. Note that if $z_i$ is a marked point $p$ then $\xi_i=u_p$ and if $z_i$ is a node $q$ then $z_i=u_q$ where $u_p,u_q$ are defined as in Construction  \ref{weight function} (where $u_q$ is defined emanating from $v$). Then, $\pi_v(\sum_{i=1}^k\xi_i)= \sum_{z\in C'} val_z (f^*x_v) =0$ and hence the tropical balancing condition (\ref{parameterized tropical coral},iii) at negative vertices is also satisfied.
\end{proof}
\section{Proof of the Main Theorem}
\label{log count equals tropical count}
In this section, we prove our main result, showing that the count of tropical corals equals the count of log corals.

\subsection{Set-up} \label{set-up}
Let  $\Delta$ be a degree as in Definition 
\ref{degree} with $\ell+1$ positive and 
$m$ negative entries, 
and $\lambda$ an asymptotic constraint in the  stable range 
$\mathcal{S}_\Delta$ as in Definition 
\ref{stable range}.
Let $\mathcal{H}_{\Delta,\lambda}$ be the set of tropical corals 
$h:\Gamma \rightarrow \overline{C}\RR$ 
matching $(\Delta, \lambda)$. This set is finite by Proposition \ref{finiteness of types} and Remark \ref{no dependence}. 

\begin{construction}
\label{rescaling}
As $\mathcal{H}_{\Delta,\lambda}$
is finite, we can rescale $N$ such that for every $(h:\Gamma \rightarrow \overline{C}\RR) \in \mathcal{H}_{\Delta,\lambda}$ the following conditions hold:
\begin{itemize}
    \item[(i)] For every vertex $v \in V(\Gamma)$, we have $h(v)\in N$.
    \item[(ii)] For every bounded edge $e\in E^b(\Gamma)$, connecting vertices $v_1, v_2 \in V(\Gamma)$, $h(v_1)-h(v_2)$ is divisible in $N$ by the weight $w_\Gamma(E)$. 
    \item[(iii)] For every negative vertex $v \in V^-(\Gamma)$, $h(v)$
    is divisible in $N$ by $w_v$ (where $w_v$ is as in Definition \ref{parameterized tropical coral}(iii)).
    \item[(iv)] For every 
    $1\leq i \leq \ell$, $\lambda_i \in N/(N \cap \QQ \overline{\Delta}^i)$.
\end{itemize}
We fix $b\in \NN\setminus \{0\}$ and we still denote by 
$\trcone \RR$ and $\trcone \P_b$ the truncated cone
and the polyhedral decomposition in the rescaled $\RR^2$.
The toric degeneration defined by the rescaled pair $(\trcone \RR, \trcone \P_b)$ only differs from the toric degeneration defined by the original pair $(\trcone \RR, \trcone \P_b)$ by a base change $t \mapsto t^n$ for some positive integer $n$. We still denote by $Y_0$
the central fiber: the base change does not change the scheme structure of $Y_0$ and only scales the log structure. 
\end{construction}

%\begin{construction}Given $\Delta$ and $\mathfrak{u}$, we explain how to construct a coral degree $\Delta_{\mathrm{log}}=(\overline{\Delta}_{\mathrm{log}},\underline{\Delta}_{\mathrm{log}})$with $|\overline{\Delta}|=\ell+1$ positive entries and $|\underline{\Delta}|=m$ negative entries,as in Definition \ref{coral degree}.First of all, we have $\mathrm{u}=(u_1,\dots,u_k) \in N_{>0}^k$ with $\ell =|\overline{\Delta}|-1$, and we set \[ \overline{\Delta}_{\mathrm{log}}^j:=u_j\]for $1 \leq j \leq \ell$. On the other hand, it follows from Definition \ref{asymptotic constraint} that there exists a unique $n \in N_{>0}$such that $\Delta(n)$ is strictly bigger than the number of $u_j$'s equal to $n$, and we set \[ \overline{\Delta}_{\mathrm{log}}^0:=n\,.\]To define $\underline{\Delta}_{\mathrm{log}}$, let $\underline{\Delta}_1,\dots,\underline{\Delta}_m$ be the unique ordering of the elements of $\underline{\Delta}$ such that the points $x_j$ of intersections of $\RR \underline{\Delta}_j$ with the horizontal line $\partial \overline{C}\RR$ satisfy $x_1<\dots<x_m$ on $\partial \overline{C}\RR \simeq \RR$.Then, we set \[ \underline{\Delta}_{\mathrm{log}}^j := \underline{\Delta}^j \]for $1 \leq j \leq m$.Finally, $\lambda$ can be viewed as a degeneration order for log corals as in Definition \ref{degeneration order}.\end{construction}

It follows from the description of the tropicalization in 
\S \ref{tropicalizations} that if 
$h:\Gamma \rightarrow \overline{C}\RR$ is the tropicalization of a log coral 
$f: C^\dagger \rightarrow Y_0^\dagger$, then
$h:\Gamma \rightarrow \overline{C}\RR$
matches $(\Delta,\lambda)$ as in Definition
\ref{trop_coral_match} if and only if $f: C^\dagger \rightarrow Y_0^\dagger$
matches $(\Delta,\lambda)$ as in Definition 
\ref{coral_match}.
In addition, let us fix a log constraint 
$\rho \in (\CC^{\times})^\ell \times {(\CC^{\times})}^{m}$ of order $(\ell,m)$
as in Definition \ref{log constraint}.
Our main result is a
correspondence between the count 
with multiplicities 
$N_{\Delta,\lambda}^{\mathrm{trop}}$ of the log corals 
matching 
$(\Delta,\lambda)$, as in
\eqref{NtropCurve}, and 
the count
$N^{log}_{\Delta, \lambda,\rho}$ of log corals in $Y_0$ matching $(\Delta,\lambda,\rho)$, as in \eqref{log_curves_count}.

\subsection{Extending the unfolded Tate curve}
\label{Extending the Tate curve}
%In this section we show that log corals can be extended to log maps in the sense of \cite[\S $1$]{logGW} and exhibit a bijection between log corals and the log maps obtained by extending them.

%Recall that the central fiber of the degeneration of the Tate curve, denoted by $Y_0$ is obtained from the pair $(\trcone \RR, \trcone \mathcal{P}_b)$, given by the truncated cone over $\RR$, endowed with a polyhedral decomposition $\trcone \mathcal{P}_b$, where $b\in \NN\setminus \{0\}$ is fixed. We first define an open subset $U_0 \subset Y_0$, and the construct a compact space $\overline{U}_0$ containing it as follows. Fix the degree of a tropical coral, defined as in Definition \ref{degree of a tropical coral}, and a good general constraint $\lambda$ as in Definition \ref{GoodConstraint}. Consider the images of all tropical corals in the set 
%\[ \mathcal{H}:=\{ h_i :\Gamma_i\to\trcone \RR~|~h_i \mathrm{~is~of ~degree} ~\Delta, ~ \mathrm{and ~matches~}\lambda \}  \]
%Recall that by Proposition \ref{finiteness of types} and Remark \ref{no dependence} there are only finitely many such tropical curves.  
As $\mathcal{H}_{\Delta,\lambda}$ is finite, we can choose
a big enough bounded closed interval $I\subset \RR$ such that for every
$(h:\Gamma \rightarrow \overline{C}\RR) \in \mathcal{H}_{\Delta,\lambda}$, the image $h(\Gamma)$ of the tropical coral is contained in 
$\trcone I \subset \trcone \RR$. Define a polyhedral decomposition $\P_I\subset \P_b$ such that cells $\sigma'\in \P_I$ are given by $\sigma'=\sigma \cap I$ for a cell $\sigma \in \P$. 
Let $\trcone \P_I$ be the polyhedral decomposition of the truncated cone over $I$, as in Definition \ref{truncated cone}, induced by $\P_I$. Let $U \to \AA^1$, be the toric degeneration associated to $(\trcone \P_I, \trcone I)$. The central fiber of this degeneration, $U_0$, is an open subset of $Y_0$ such that the image of any log coral $f:C^{\dagger}\to Y_0^{\dagger}$ whose tropicalization is 
$h:\Gamma \rightarrow \overline{C}\RR$
lies inside $\trcone I \subset \trcone\RR$. 

Corresponding to each vertex in the interior of the interval $I$, we have an irreducible component of $U_0$ given by $\PP^1 \times \AA^1$, and corresponding to the vertices that are endpoints of the interval $I$, there are irreducible components of $U_0$ given by $\AA^1 \times \AA^1$. The open subset $U_0 \subset Y_0$ is a union of these components pairwise glued along affine lines. See Figure \ref{U0} for an illustration.

\begin{figure}
	\resizebox{.9\linewidth}{!}{\input{UU0.pspdftex}}
	\caption{The truncated cone $\trcone I$ together with the polyhedral decomposition $\trcone \P_I$ on the left and its dual, corresponding to the momentum map images of components of $U_0$ on the right.}
	\label{U0}
\end{figure}

We define a compact variety $\overline{U}_0$ obtained from $U_0$ by extending the polyhedral decomposition $\trcone \P_I$ of $\trcone I$ to a polyhedral decomposition $\tilde{\P}$ of $\RR^2$ by applying the following steps: First add  the horizontal line $\partial 
\overline{C}\RR$ to $|\trcone \P_I|$. Then, extend each half line meeting $\partial 
\overline{C}\RR$ to a line which passes through the origin, and insert additional $2$-cells bounded by any of the new $1$-cells. The new polyhedral decomposition $\tilde{\P}$ is called \emph{the extended polyhedral decomposition} of $\trcone \P$, and defines a toric degeneration $\overline{U}\to \Spec \CC[s,t]$, whose fiber over $t=0$ is $\overline{U}_0$. See Figure \ref{ExtendedP} for an illustration of the extended polyhedral decomposition.
\begin{figure}
	\resizebox{.9\linewidth}{!}{\input{Z0.pspdftex}}
	\caption{The extended polyhedral decomposition $\tilde{\P}$ obtained from $\trcone \P_I$ in Figure \ref{U0} and a union of the momentum map images of the components of $\overline{U}_0$.}
	\label{ExtendedP}
\end{figure}
Note that $U_0$ is not dense in $\overline{U}_0$, as $\overline{U}_0$ has an additional irreducible component $Z_0$ corresponding to the vertex at the origin of the polyhedral decomposition $\tilde{\P}$. So, $Z_0$ is a complete toric variety associated to the fan in $\RR^2$ whose $1$-dimensional cones are given by $n$ lines passing through the origin, where $n$ is the number of vertices of $\tilde{\P}$ contained in the interval $I$, i.e. all vertices of $\tilde{\P}$ except the origin. As illustrated in Figure \ref{ExtendedP}, topologically $\overline{U}_0$ is given by the union of $n$ copies of $\PP^1 \times \PP^1$ glued to $Z_0$ along a union of projective lines;
\[\overline{U}_0=\big(\bigcup_n \PP^1 \times \PP^1 \big) \bigcup_{\cup_n \PP^1} Z_0.\]
We endow $\overline{U}$ with the divisorial log structure as usual, by considering the divisor $\overline{U}_0 \subset \overline{U}$. We define a log structure on the central fiber $\overline{U}_0$ over $t=0$, by pulling back the divisorial log structure on $\overline{U}$.

\subsection{Extending Log Corals to Log Maps}
\label{technical part}
We show that log corals on the central fiber of the degeneration of the unfolded Tate curve, $Y_0$, defined as in \S\ref{The unfolded Tate curve} can be extended to log maps in $ \overline{U}_0$, the central fiber of the degeneration defined \S\ref{Extending the Tate curve}. Note that log maps to $\overline{U}_0$ are maps from a complete log smooth curve, which we require to be log smooth over the standard log point throughout this section. For details of the theory of log maps we refer to \cite[\S $1$]{logGW}.
\begin{definition}
\label{log extension}
Let $(h:\Gamma \rightarrow \overline{C}\RR) \in \mathcal{H}_{\Delta,\lambda}$ be a tropical coral in 
$\overline{C}\RR$.
Let $f: C^\dagger \to Y_0^\dagger$ be a log coral with tropicalization $h:\Gamma \rightarrow \overline{C}\RR$, and let 
$\tilde{h}:\tilde{\Gamma}\rightarrow \RR^2$ be the extension of $h:\Gamma \rightarrow \overline{C}\RR$ as a tropical curve in 
$\RR^2$, defined as in \S \ref{Extending a tropical coral to a tropical curve}. A log map $\tilde{f}:\tilde{C}^\dagger \to  \overline{U}_0^\dagger$ is called a \emph{log extension} of $f$ if the following holds:
\begin{itemize}
    \item[(i)] The tropicalization of $\tilde{f}$, defined analogously as for a log coral in \S\ref{tropicalizations}, is $\tilde{h}:\tilde{\Gamma}\rightarrow \RR^2$.
    \item[(iii)] Viewing $C \subset \tilde{C}$ as the non-complete subcurve of $\overline{C}$ whose tropicalization is $\Gamma \subset \tilde{\Gamma}$, we have   
    $\tilde{f}|_C=f$.
\end{itemize}
%the restriction\[  f_e|_C = f ,\] and the 
\end{definition}

\begin{definition}
\label{log degree}
Let $\Delta$ be a degree with $\ell+1$ positive and $m$ negative entries, $\lambda$ an asymptotic constraint for $\lambda$, and 
$\rho$ a log constraint of order $(\ell,m)$.
We say that a log map $\tilde{f}:\tilde{C}^{\dagger}\to\overline{U}_0^{\dagger}$ with $(\ell+1)+m$ marked points $p_0,\dots,p_\ell, \dots, q_1,\dots,q_m$
 \emph{matches} $(\Delta,\lambda,\rho)$
 if:
 \begin{itemize}
     \item[(i)] For each marked point $p_i$, the composition 
\[\mathrm{pr}_2\circ \overline{\tilde{f}}^{\flat}_{p_i}:\overline{\M}_{{\overline{U}_0},\tilde{f}(p_i)}^{gp} \rightarrow \overline{\M}_{\tilde{C},p_i}^{gp}\rightarrow \ZZ,\]
equals $\overline{\Delta}^i$, where $\mathrm{pr}_2$ is the projection map to the second factor defined as in \eqref{Pr for log}.
\item[(ii)] For each marked point $q_j$, the composition 
\[\mathrm{pr}_2\circ \overline{\tilde{f}}^{\flat}_{q_j}:\overline{\M}_{{\overline{U}_0},\tilde{f}(q_j)}^{gp} \rightarrow \overline{\M}_{\tilde{C},q_j}^{gp}\rightarrow \ZZ,\]
equals $\underline{\Delta}^j$, 
 where $\mathrm{pr}_2$ is the projection map to the second factor defined as in \eqref{Pr for log}.
\item[(iii)]For each marked point $p_i$
with $1 \leq i\leq \ell$, 
let $C_i$ be the irreducible component
of $\tilde{C}$ containing $p_i$, of generic point $\eta_i$. Then, the image of 
\[(\bar{\tilde{f}}_{\eta_i}:\overline{\M}_{\overline{U}_0,\tilde{f}(\eta_i)}\rightarrow \overline{\M}_{\tilde{C},\eta_i}\cong \NN) \in N \]
under the quotient map 
\[ N\rightarrow N / (N \cap \RR \overline{\Delta}^i)\] 
is equal to $\lambda_i$.
\item[(iv)]For each marked point $q_j$, 
let $C_j$ be the irreducible component
of $\tilde{C}$ containing $q_j$, of generic point $\eta_j$. Then, the image of 
\[(\bar{\tilde{f}}_{\eta_j}:\overline{\M}_{\overline{U}_0,\tilde{f}(\eta_j)}\rightarrow \overline{\M}_{\tilde{C},\eta_j}\cong \NN) \in N \]
under the quotient map 
\[ N\rightarrow N / (N \cap \RR \underline{\Delta}^j)\] 
is equal to $0$.
\item[(v)]
for $1\leq i\leq \ell$,  $\tilde{f}^\flat_{p_i}(s_{\overline{\Delta}^i,\lambda_i})(p_i)=\overline{\rho_i}$, 
where $s_{\overline{\Delta}^i,\lambda_i}$ is defined as in Discussion \ref{DiscussionOfLogConstraints}.
\item[(vi)] for $1\leq j\leq m$, $\tilde{f}^\flat_{q_j}(s_{\underline{\Delta}_j,v_j^-})(q_j)=\underline{\rho}_j$, where $s_{\underline{\Delta}_j,v_j^-}$ is defined as in Discussion \ref{DiscussionOfLogConstraints}.
 \end{itemize}

%The \emph{log degree} with $\ell+1$ positive and $m$ negative entries is a tuple elements in $N$ given by \[\tilde{\Delta}:=(\overline{\tilde{\Delta}}^{\ell+1},\underline{\tilde{\Delta}}^m)\subset N^{\ell+1}\times N^m\]
%with
%\[
%\pr_2(\overline{\tilde{\Delta}}_i) > 0 ~~~~~ \mathrm{and} ~~~~~\pr_2(\underline{\tilde{\Delta}}_j) < 0
%\]
%$\tilde{\Delta}:=(\overline{\tilde{\Delta}}^{\ell+1},\underline{\tilde{\Delta}}^m)$ 
%if the following conditions hold 
%\begin{enumerate}
%\item[(i)] For $i=1,\ldots,\ell+1$, the composition \[\mathrm{pr}_2\circ \overline{f}^{\flat}_{x_i}:\overline{\M}_{{Y_0},f(x_i)}^{gp} \rightarrow \overline{\M}_{C,x_i}^{gp}\rightarrow \ZZ\] equals $\overline{\Delta}^i$,
%where $\mathrm{pr}_2$ is the projection map to the second factor defined as in \eqref{Pr for log}.
%\item[(ii)] For $j=1,\ldots,m$ the composition \[\mathrm{pr}_2\circ \overline{f}^{\flat}_{p_j}:\overline{\M}_{{Y_0},f(p_j)}^{gp} \rightarrow \overline{\M}_{C,p_j}^{gp} \rightarrow \ZZ\] 
%equals $\underline{\Delta}^j$, where $\mathrm{pr}_2$ is the projection map to the second factor defined as in \eqref{Pr for log}.
%\end{enumerate}
\end{definition}
By construction of the tropicalization, 
if a log map $\tilde{f}: \tilde{C}^\dagger \rightarrow \overline{U}_0^\dagger$ matches 
$(\Delta,\lambda,\rho)$ as in Definition 
\ref{log degree}, then its tropicalization 
$\tilde{h}: \tilde{\Gamma} \rightarrow \RR^2$
matches $(\Delta,\lambda)$ as in Definition 
\ref{trop_const}.

\begin{lemma}
\label{Lem: stable  log map to log coral}
Let $U_0$ and $\overline{U}_0$ be defined as in \S\ref{Extending the Tate curve}, and let $\tilde f: \tilde{C}^\dagger \to   \overline{U}_0^\dagger$ be a log map matching $(\Delta,\lambda,\rho)$. Then the projection from the fibre product $\tilde f\times_{ \overline{U}_0} U_0\to U_0$ composed with the open embedding $U_0\hookrightarrow Y_0$ is a log coral matching $(\Delta,\lambda,\rho)$.
%induced from $(\widetilde{\Delta},\widetilde{\lambda},\widetilde{\rho})$.
\end{lemma}

\begin{proof}
The result follows immediately by comparison of Definition \ref{log count} and Definition 
\ref{log degree}.
\end{proof}

\begin{theorem}
\label{LogExtension}
%Fix log incidences $(\widetilde{\Delta},\widetilde{\lambda},\widetilde{\rho})$, determined from incidences $(\Delta,\lambda,\rho)$ defined as in \S\ref{The set up of the counting problem}. 
The forgetful map
\[
\widetilde{\mathfrak L}_{(\Delta,\lambda,\rho)} \to \mathfrak L_{(\Delta,\lambda,\rho)}
\]
from the set of isomorphism classes of log maps matching $(\Delta,\lambda,\rho)$, to the set of isomorphism classes of log corals matching  $(\Delta,\lambda,\rho)$ is bijective.
\end{theorem}
\begin{proof}
Let $(h:\Gamma \rightarrow \overline{C}\RR) \in \mathcal{H}_{\Delta,\lambda}$ be a tropical coral in 
$\overline{C}\RR$.
Let $f: C^\dagger \to Y_0^\dagger$ be a log coral with tropicalization $h:\Gamma \rightarrow \overline{C}\RR$, and let 
$\tilde{h}:\tilde{\Gamma} \rightarrow \RR^2$ be the extension of $h:\Gamma \rightarrow \overline{C}\RR$ as a tropical curve in 
$\RR^2$, defined as in \S \ref{Extending a tropical coral to a tropical curve}. 
We have to show that there exists a unique log map $\tilde{f}:\tilde{C}^\dagger \to  \overline{U}_0^\dagger$ which is a log extension of $f$ as in Definition \ref{log extension}.

We first show that there exists a unique 
log curve $\tilde{C}^{\dagger}$ compatible with Definition \ref{log extension}. 
From the description of 
$\tilde{\Gamma}$ in \S \ref{Extending a tropical coral to a tropical curve}
 and the description of tropicalization in \S\ref{tropicalizations}, we deduce that at the scheme level, 
$\tilde{C}$ is given as follows. 
Let $C'_1,\dots,C'_m$ be the non-proper components of $C$. Let $\overline{C}$
be the proper curve obtained from $C$ by adding a point at infinity $\infty_{C'_j}$ to each $C'_j$. 
For every 
$1 \leq j\leq m$, let $\PP^1_j$ be a copy of $\PP^1$ with two marked points $P_{1j}$, $P_{2j}$.
Then $\tilde{C}$ 
is obtained by gluing together 
$\overline{C}$ and the $m$ copies $\PP^1_j$
of $\PP^1$: 
for every $1 \leq j\leq m$, we glue transversally 
the marked point $P_{1j} \in \PP^1$
with $\infty_{C'_j} \in \overline{C}$. 
The resulting marked curve 
$\tilde{C}$ is unique up to isomorphism.
In fact, analogously as in construction \cite[Construction 4.4]{NS}, we also need to attach further $\PP^1$-components, corresponding to divalent vertices not mapping to the origin, which correspond to the intersection points of the image of $\tilde{h}$ with polyhedral decomposition $\tilde{\P}$ on $\mathbb{R}^2$, described in \S\ref{Extending the Tate curve}. However, the image of these components can be determined as in \cite{NS}, and as it does not effect the current proof, we ignore these further components. 

The log structure $\mathcal{M}_{\tilde{C}}$
is also uniquely determined by Definition 
\ref{log extension}. 
Indeed, as we want the marked curve $\tilde{C}$ to be log smooth over the standard log point $\Spec \CC^\dagger$,
the log structure on $\tilde{C}$ must be of the form given by Theorem \ref{Thm: structure of log curves} and the only choices
are the parameters $\rho_q$ of the log structures at the nodes.
All nodes of $\tilde{C}$ distinct from 
the nodes $\infty_{C'_j}$ are nodes of $C$ and so the log structure there is already fixed by
 Definition 
\ref{log extension}(ii).
On the other hand, by Definition \ref{log extension}(i) and the description of tropicalization in 
\S \ref{tropicalizations}, for every $1\leq j\leq m$, the parameter $\rho_{\infty_{C'_j}}$
of the node $\infty_{C'_j}$
has to be equal to the the divisibility in $N$ of $h(v_j^-)/w_{v_j^-}$
where $v_j^-$ is the negative vertex of $\Gamma$
corresponding to $C'_j$
and $w_{v_j^-}$ is the weight at
$v_j^-$ as in Definition 
\ref{parameterized tropical coral}(iii). 
Note that we have indeed $h(v_j^-)/w_{v_j^-} \in N$
by our choice of rescaling of $N$
in Construction \ref{rescaling}(iii).
Hence, we have a uniquely determined log structure $\mathcal{M}_{\tilde{C}}$
on $\tilde{C}$ and we denote $\tilde{C}^\dagger:=(\tilde{C},\mathcal{M}_{\tilde{C}})$. By construction, the log curve 
$\tilde{C}^\dagger$ is log smooth over the standard log point $\Spec \CC^\dagger$.

Next, we show that there exists a unique stable map extension $\tilde{f}: \tilde{C} \rightarrow \overline{U}_0$ compatible with Definition \ref{log extension}. 
We have to define 
$\tilde{f}$ at each of the added components 
$\PP^1_j$ of $\tilde{C}$ for $1 \leq j \leq m$.
%We will determine the image under $f_e$ of any $\PP^1$-component intersecting the completion of an $\AA^1$-component at the point at infinity, by consideration of the log constraints. 
First, note that from the tropical picture, such a component maps into the complete toric variety $Z_0\subset \overline{U}_0$, as in \eqref{U0}. Also, recall from Discussion \ref{DiscussionOfLogConstraints} that such a $\PP^1$ component $\PP^1_j$ determines a monomial $m_{\underline{\Delta}_j,v_j^-}$ and hence a section 
\[s_{\underline{\Delta}_j,v_j^-}\in \Gamma(Y_0,\M_{Y_0}^\gp)\]
with the property that $f^\flat(s_{\underline{\Delta}_j,v_j^-})$ is invertible at the point at infinity, $\infty_{C'_j}$. Moreover, the corresponding log constraint $\rho^j$ is the value 
\[f^\flat(s_{\underline{\Delta}_j,v_j^-})(\infty_{C'_j})\in \CC^\times\,.\] 
%The torsor for the corresponding section $\ol {{f_e}^\flat}(\ol s_{u,v})$ of ${\bar{f}_e}^{-1}\ol \M_{\ol U_0}$ is trivial by the proof of Theorem \ref{LiftsExist} and Remark \ref{LiftsExistsInToric Cases}.
As a rational function, $z^{m_{\underline{\Delta}_j,v_j^-}}$ is defined on $Z_0\subset \overline{U}_0$ and has to restrict to the constant function on $\PP^1_j$, with value the log constraint $\rho^j\in\CC^\times$. We deduce that $\tilde{f}|_{\PP^1_j}: \PP^1\to Z_0$ is a cover of the line 
\[\{\rho^j\}\times\PP^1\subset \CC^\times\times\PP^1\] 
fully branched over the two intersection points with the toric boundary of $Z_0$. The covering degree agrees with the weight of the edges adjacent to the vertex of $\tilde\Gamma$ corresponding to $\mathbb{P}^1$. Hence, it is determined completely by the tropical extension $\tilde h$, and in fact equal to $w_{v_j^-}$.
%Now, consider the projection to the first factor $\CC^\times\times\PP^1\to\CC^\times$. So, the rational function $z^{m_{u,v}}$ restricts to $\CC^\times$ and hence the position $p\in \CC^\times$ of the line agrees with the value of 
%\[f^\flat(s_{u,v})=f^\sharp(z^{m_{u,v}})\] 
%along $\mathbb{P}^1$, which in turn agrees with the log constraint $\rho$ of either the log coral or the log map. 
Conversely, $z^{m_{\underline{\Delta}_j,v_j^-}}=\rho^j$ defines a rational curve in $Z_0$, and the $w_{v_j^-}$-fold branched cover defines the extension $\tilde{f}$ on $\mathbb{P}^1_j$. This way we determine uniquely the image of each inserted $\PP^1$ component
$\PP^1_j$, and hence we determine the stable map $\tilde{f}: \tilde{C}\to \overline{U}_0$. 

It remains to show that $\tilde{f}$ can be uniquely promoted to a log map 
$\tilde{C}^\dagger \rightarrow \overline{U}_0^\dagger$.
As $\tilde{C}$ is log smooth over the standard log point, it follows analogously as in the torically transverse case from the proof of \cite[Prop 7.1]{NS} that the extension of 
$\tilde{f}$ away from the nodes is uniquely determined by strictness.
For the extension at the node $\infty_{C_j'}$, it follows from the proof of Proposition~7.1 of \cite{NS} that there are locally 
$w_{v_j^-}$ ways to extend $\tilde{f}$
as a log map in a neighborhood of the node.
These $w_{v_j^-}$ choices are parametrized by a choice of $w_{v_j^-}$-th root of unity.
However, there is also an action of the Galois group $\ZZ/w_{v_j^-}\ZZ$ of the $w_{v_j^-}$-fold cyclic branched cover $\PP^1_j \to \tilde{f}(\PP^1_j)$ which acts simply transitively on the set of such choices. Thus, we obtain globally a unique extension of 
$\tilde{f}$ as a log map 
$\tilde{C}^\dagger \rightarrow \overline{U}_0^\dagger$.

%For the extension at the node $q$ observe that the stalk of $\M_{\ol U_0}^\gp$ at $f(q)$ can be generated by two toric generators $s_u,s_v$, fulfilling the equation \[s_u\cdot s_v=s_t^b\] 
%where $b \in \ZZ_{>0}$ is fixed, as in \S \ref{The Tate curve and its unfolding}. Moreover, $s_u,s_v\in\M_{\ol U_0,f(q)}$ generate $\M^{gp}_{\ol U_0,f(q)}$, up to powers of $s_{u,v}$, the global section corresponding to $z^{m_{u,v}}$. We already had determined ${f_e}^\flat(s_{u,v})$ above. So, it only remains to specify the value of the map ${f_e}^\flat:{f_e}^{-1}\M_{\ol U} \to \M_{\ol C}$ on $s_u$ and $s_v$ in a way compatible with the structure homomorphisms and with the equation $s_u s_v=s_t^b$. This is done completely analogous to the torically transverse case, treated in the proof of Proposition~7.1 of \cite{NS}. The proof also shows that, ${f_e}^\flat$ exists uniquely as a map of log structures over the trivial log point $\Spec\CC$, but that working over the standard log point $\Spec\CC^\ls$ brings in a choice of a $w$-th root of unity. However, there is also an action of the Galois group $\ZZ/w\ZZ$ of the $w$-fold cyclic branched cover $\PP^1\to f_e(\PP^1)$ which acts simply transitively on the set of such choices. Thus, we obtain a unique extension of a log coral to a log map, up to isomorphism.
\end{proof}
%\begin{example}
%The following figure illustrates a map $f:C^{\dagger}\to Y_0^{\dagger}$ and its log extension $f_e:\overline{C}^{\dagger}\to \overline{U}_0^{\dagger}$ together with their tropicalizations $h:\Gamma\to \trcone \RR$ and $\widetilde{h}:\widetilde{\Gamma}\to \RR^2$ respectively.
%\vspace{1cm}
%\begin{center}
%\resizebox{0.8\textwidth}{!}{
%\input{Extend.pspdftex}
%}
%\end{center}
%\end{example}
%As an immediate corollary to Theorem \ref{LogExtension} observe that the number of lifts of the data of a scheme theoretic morphism together with a morphism on the level of ghost sheaves associated to a tropical coral, to a log coral is equal to the number of lifts to the analogous data associated to the tropical extension to a stable log map.
%===========================================================
\subsection{Reduction to the torically transverse case}
\label{Reduction to the torically transverse case}
A correspondence theorem between tropical curves and log maps is proved in \cite{NS}, in torically transverse situations. Toric transversality \cite[Defn~4.1]{NS} means that the image of a log map is disjoint from toric strata of codimension larger than one. The tropicalization of such a map then maps to the $1$-skeleton of the polyhedral decomposition defining the considered degeneration.
%Given a log coral, by Theorem \ref{LogExtension} there exist unique a extension of it to a log map. Moreover, by construction the tropicalization of this log map is a tropical curve, obtained by extending the tropical coral associated to the log coral. Thus, to show the counts of tropical and log corals are equal, it is sufficient to show the counts of log extensions and tropical extensions are equal.  We show this in this section, by first reducing the situation to the torically transverse situation treated in \cite{NS}. Toric transversality \cite[Defn~4.1]{NS} means that the image of $f:C\to Y_0$ is disjoint from toric strata of codimension larger than one. The associated tropical curve then maps to the $1$-skeleton of the polyhedral decomposition defining the considered degeneration.
The following general result will allow us to treat situations that are not torically transverse.
\begin{lemma}
\label{Interpolating family}
Let $\P$ be an integral polyhedral decomposition of $N_\RR$ and $\P'$ an integral refinement. Denote by $\pi:X\to\AA^1$ and $\pi':X'\to \AA^1$ be the associated toric degenerations of toric varieties. Assume that $\P$ and $\P'$ are regular, that is, they support strictly convex piecewise affine functions. Then there is a two-parameter degeneration
\[
\tilde\pi: \tilde X\lra \AA^2
\]
with restrictions to $\AA^1\times\GG_m$ and to $\GG_m\times\AA^1$ equal to $\pi\times\id_{\GG_m}$ and to $\id_{\GG_m}\times\pi'$, respectively.
\end{lemma}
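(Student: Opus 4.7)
The plan is to construct $\tilde X$ as a toric variety associated to an explicit fan $\tilde \Sigma$ in $\tilde N := N\oplus\ZZ^2$ and to read the desired restrictions off from sub-fans. For each pair of maximal cells $\sigma\in\P$ and $\sigma'\in\P'$ with $\sigma'\subseteq\sigma$, I would set
\[
\tilde C(\sigma,\sigma') \;:=\; \RR_{\ge 0}\cdot\bigl(\sigma\times\{(1,0)\}\bigr)\;+\;\RR_{\ge 0}\cdot\bigl(\sigma'\times\{(0,1)\}\bigr)\;\subset\; N_\RR\oplus\RR^2,
\]
and let $\tilde\Sigma$ be the collection of all these cones together with their faces. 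The projection $\tilde N_\RR\to\RR^2$ to the last two coordinates is a map of fans from $\tilde\Sigma$ to the standard fan of $\AA^2=\Spec\CC[s,t]$ and induces the toric morphism $\tilde\pi:\tilde X\to\AA^2$. One checks that the support of $\tilde\Sigma$ equals $N_\RR\oplus\RR_{\ge 0}^2$, the full preimage of $|\Sigma_{\AA^2}|$, which will be needed for properness of $\tilde\pi$.

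The first step is to verify that $\tilde\Sigma$ is actually a fan. Every face of $\tilde C(\sigma,\sigma')$ is of the form $\tilde C(\tau,\tau')$ for faces $\tau\preceq\sigma$ in $\P$ and $\tau'\preceq\sigma'$ in $\P'$ with $\tau'\subseteq\tau$, where the conventions allow $\tau$ or $\tau'$ to be empty (meaning the corresponding group of generators is omitted). The required compatibility
\[
\tilde C(\sigma_1,\sigma_1')\cap\tilde C(\sigma_2,\sigma_2') \;=\; \tilde C(\sigma_1\cap\sigma_2,\;\sigma_1'\cap\sigma_2'),
\]
with the intersections on the right taken in $\P$ and $\P'$, decouples because the generators $\sigma\times\{(1,0)\}$ and $\sigma'\times\{(0,1)\}$ lie in complementary coordinate hyperplanes of $\RR^2$. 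Each of the two resulting pieces is then immediate from the polyhedral-complex axioms for $\P$ and $\P'$, using that $\P' \prec \P$ assigns each cell of $\P'$ a unique minimal ambient cell in $\P$, making the indexing of maximal cones unambiguous.

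Next, I would identify the asserted restrictions. The sub-fan of $\tilde\Sigma$ whose image under the projection to $\RR^2$ lies in $\RR_{\ge 0}(1,0)\cup\{0\}$ consists of exactly $\tilde C(\sigma,\emptyset)=C(\sigma)\times\{0\}$ for $\sigma\in\P$; this is $\tilde\Sigma_\P$ placed in the hyperplane $N_\RR\oplus\RR\oplus\{0\}$. Since the third lattice direction is absent from every cone of this sub-fan, the associated open toric subvariety is $X\times\GG_m$, and the projection to $\AA^1\times\GG_m$ is precisely $\pi\times\id_{\GG_m}$. The symmetric analysis over $\RR_{\ge 0}(0,1)$ yields $\tilde X|_{\GG_m\times\AA^1}=\GG_m\times X'$ and recovers $\id_{\GG_m}\times\pi'$, as required.

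Finally, the regularity hypothesis enters to ensure $\tilde\pi$ is projective (hence proper and flat): given strictly convex PA functions $\psi$ for $\P$ and $\psi'$ for $\P'$, they combine into a strictly convex PA function on $\tilde\Sigma$, homogeneous of degree one, which is linear on each $\tilde C(\sigma,\sigma')$ with slopes determined by the linearity domains of $\psi$ on $\sigma$ and of $\psi'$ on $\sigma'$; strict convexity across walls follows from that of $\psi$ and $\psi'$ individually, since walls of $\tilde\Sigma$ refine, in a product sense, walls of $\tilde\Sigma_\P$ and of $\tilde\Sigma_{\P'}$. The main obstacle in this plan is the fan-compatibility bookkeeping of the second paragraph: when cells of $\P'$ straddle several cells of $\P$ in a combinatorially intricate way, one must check that the intersection of two cones $\tilde C(\sigma_i,\sigma_i')$ matches $\tilde C(\sigma_1\cap\sigma_2,\sigma_1'\cap\sigma_2')$ on the nose, without spurious extra generators appearing. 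The cleanest way I see to organize this is to note that the two types of generators live in complementary hyperplanes of $\RR^2$, so the compatibility splits into two independent polyhedral checks, each of which reduces to a standard statement about polyhedral complexes.
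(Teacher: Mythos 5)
There is a genuine gap, and it is exactly at the step you flag as "bookkeeping": the collection of cones you propose is not a fan. Your cones are Minkowski sums, $\tilde C(\sigma,\sigma')=\RR_{\ge 0}(\sigma\times\{(1,0)\})+\RR_{\ge 0}(\sigma'\times\{(0,1)\})$, and the claimed decoupling fails because a point $\lambda(p,1,0)+\mu(q,0,1)$ only records the sum $\lambda p+\mu q$ in the $N_\RR$-direction, and such decompositions are far from unique. Concretely, take $N=\ZZ$, a maximal cell $\sigma=[0,2]$ of $\P$ refined into $\sigma_1'=[0,1]$, $\sigma_2'=[1,2]$ in $\P'$. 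Then
\[
\tilde C(\sigma,\sigma_1')=\{(x,\lambda,\mu):\lambda,\mu\ge 0,\ 0\le x\le 2\lambda+\mu\},\qquad
\tilde C(\sigma,\sigma_2')=\{(x,\lambda,\mu):\lambda,\mu\ge 0,\ \mu\le x\le 2\lambda+2\mu\},
\]
two distinct full-dimensional cones both containing $(2,1,1)$ in their interiors; their intersection is the full-dimensional cone $\RR_{\ge0}(\sigma\times\{(1,0)\})+\RR_{\ge0}(\{1\}\times\{(0,1)\})$, which is not a face of either, and it also differs in general from your formula $\tilde C(\sigma_1\cap\sigma_2,\sigma_1'\cap\sigma_2')$ (for two non-adjacent cells $\sigma_1',\sigma_2'$ inside the same $\sigma$ that formula predicts a lower-dimensional or empty cone while the actual overlap is still full-dimensional). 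So $\tilde\Sigma$ as you define it is not a fan, no toric variety $\tilde X$ is produced, and the subsequent identification of the restrictions over $\AA^1\times\GG_m$ and $\GG_m\times\AA^1$ has nothing to stand on.

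The paper avoids this by never taking joins: it homogenizes the two strictly convex functions separately, $\Phi(x,\lambda)=\lambda\varphi(x/\lambda)$ and $\Phi'(x,\mu)=\mu\varphi'(x/\mu)$, and takes $\tilde\Sigma$ to be the corner locus of $\Psi(x,\lambda,\mu)=\Phi(x,\lambda)+\Phi'(x,\mu)$, so the maximal cones are the fiberwise \emph{intersections} $\{(x,\lambda,\mu): x\in\lambda\sigma\cap\mu\sigma'\}$ rather than sums; these do intersect pairwise in common faces precisely because they are domains of linearity of a single convex function. Note also that this is where the regularity hypothesis is actually used — to produce $\varphi,\varphi'$ and hence a well-defined fan at all — not, as in your last paragraph, to upgrade an already-constructed $\tilde\pi$ to a projective morphism (projectivity is not even asserted in the lemma). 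If you want to repair your argument you should replace your cones by these intersection cones (or equivalently run the convex-function construction), after which the computation of the restriction over $\RR_{\ge0}(1,0)\cup\{0\}$ still requires care with the faces at $\mu=0$, since there the cells $\mu\sigma'$ degenerate to their asymptotic cones.
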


\begin{proof}
Let $\varphi$, $\varphi'$ be strictly convex, piecewise affine functions with corner loci (or non-differentiability loci) $\P$, $\P'$, respectively (by this we mean that the non-extendable domains of linearity of $\varphi$, $\varphi'$ coincide with the maximal cells of $\P$, $\P'$, respectively). We can assume $\varphi,\varphi'$ are defined over the rational numbers and hence, after rescaling, that they are defined over the integers. Denote by $\Phi: N_\RR\oplus\RR_{>0}\to \RR$ the homogenization of $\varphi$:
\[
\Phi: N_\RR\lra\RR,\quad \Phi(x,\lambda) = \lambda\cdot \varphi\big(\frac{x}{\lambda}\big).
\]
Note that $\Phi$ is the restriction of a linear function on the cone $C(\sigma)$ for any $\sigma\in\P$. Now the fan $\Sigma=C(\P)$ can be defined by the corner locus of $\Phi$, that is, the maximal elements of $\Sigma$ are the domains of linearity of $\varphi$. Similarly, we define the homogenization $\Phi'$ of $\varphi'$.
\[
\Phi': N_\RR\lra\RR,\quad \Phi'(x,\mu) = \mu\cdot \varphi'\big(\frac{x}{\mu}\big).
\]
To define a two-parameter degeneration we use the fan $\tilde\Sigma$ in $N_\RR\oplus\RR\oplus\RR$ defined by the corner locus of the following piecewise linear function:
\[
\Psi(x,\lambda,\mu):= \Phi(x,\lambda)+\Phi'(x,\mu).
\]
Now if \[(x,\lambda,\mu)\in|\tilde\Sigma|\] then $\Psi$ is linear on \[(x,\lambda,\mu)+\{0\}\times\RR_{\ge0}^2.\] Thus the projection \[N_\RR\oplus \RR^2\to \RR^2\] induces a map of fans from $\tilde\Sigma$ to the fan of $\AA^2$. Define $\tilde X$ as the toric variety defined by $\tilde\Sigma$ and \[\tilde\pi: \tilde X\to\AA^2\] as the toric morphism defined by the map of fans just described. The restriction of $\tilde\pi$ to $\AA^1\times\GG_m$ is described by the intersection of $\tilde\Sigma$ with $N_\RR\times\RR\times \{0\}$. This intersection is the corner locus of \[\Psi|_{N_\RR\times\RR\times\{0\}}.\] But \[\Psi(x,\lambda,0)=\Phi(x,\lambda),\]so this corner locus defines $C(\P)$. Thus 
\[\tilde\pi|_{\AA^1\times\GG_m} = \pi \times \id_{\GG_m}.\] Analogously we conclude \[\tilde\pi|_{\GG_m\times\AA^1}= \id_{\GG_m}\times \pi'\]
\end{proof}
With the construction of Lemma~\ref{Interpolating family} we are now in position where we can refer to \cite{NS} to show main result in the following theorem. 
\begin{theorem}
\label{main theorem}
Let  $\Delta$ be a degree as in Definition 
\ref{degree} with $\ell+1$ positive and 
$m$ negative entries, $\lambda$ an asymptotic constraint in the  stable range 
$\mathcal{S}_\Delta$ as in Definition 
\ref{stable range}, 
and $\rho$ a
log constraint  of order $(\ell,m)$
as in Definition \ref{log constraint}.
Let $N^{\mathrm{trop}}_{\Delta,\lambda}$ be the count with multiplicities of tropical corals in $\trcone \RR$ 
matching $(\Delta, \lambda)$, defined as in \eqref{Ntrop}, and let $N^{log}_{\Delta,\lambda,\rho}$ be the count of log corals in $Y_0$ matching $(\Delta,\lambda,\rho)$, defined as in \eqref{log count}. Then,
\[ N^{\mathrm{trop}}_{\Delta,\lambda} = N^{log}_{\Delta,\lambda,\rho}\,. \]
Moreover, this count is a log Gromov--Witten invariant of the (unfolded) Tate curve.
\end{theorem}
\begin{proof}
%By Theorem \ref{LogExtension}, since there exists unique extensions of log corals to log curves, $N^{log}_{\Delta,\lambda,\rho}:= |\shL_{\Delta,\lambda,\rho}|$ where $\shL_{\Delta,\lambda,\rho}$ is the space of log curves, that are extensions of log corals, with log degree $\Delta$, degeneration order $\lambda$ and matching log constraint $\rho$. By construction \ref{extension of a tropical coral}, the counts of tropical corals, given by $N^{\mathrm{trop}}$, is equal to the count of tropical curves which are extensions of the tropical corals. We will show these two counts of tropical curves and log curves match.
Let $f:{C}^{\dagger}\to {Y_0}^{\dagger}$ be a log coral
with tropicalization 
$h \colon \Gamma \rightarrow \overline{C}\RR$. 
By Theorem \ref{LogExtension}, it extends uniquely to a log map $\tilde{f}:
\tilde{C}^\dagger \to \overline{U}_0^\dagger$ with tropicalization 
$\widetilde{h}:\widetilde{\Gamma}\to \RR^2$, where $\RR^2$ is endowed with the extended polyhedral decomposition $\tilde{\P}$ defined at the beginning of this subsection. This polyhedral decomposition is regular by inspection. 

Our aim is to achieve the toric transversality as in \cite{NS}, to use the correspondence theorem between tropical curves and log maps shown in \cite[\S 8]{NS}. For this we need to ensure that the image of $\widetilde{h}$ remains inside the $1$-skeleton of the polyhedral decomposition $\tilde\P$ refine $\tilde\P$ accordingly. To obtain an appropriate refinement we simply add $\widetilde{\Gamma}$ to the $1$-skeleton of $\tilde{\P}$. We do this refinement to $\tilde{\P}$ simultaneously for all tropical curves $\widetilde{h}_i:\widetilde{\Gamma}_i\to \RR^2$ of degree $\Delta$ matching $\lambda$ and of type $(\Gamma,u)$. Note that by \S \ref{The count of tropical corals}, there are finitely many such tropical curves. This way obtain a new polyhedral decomposition $\tilde{\P'}$ of $\RR^2$. By further refinement and rescaling we may assume that $\tilde\P'$ is integral and regular. Denote by $W_0$ the fiber over $t=0$ of the toric degeneration associated to $\tilde{\P'}$. So, $W_0$ carries the natural log structure $\M_{W_0}$ obtained by the pull-back of the divisorial log structure on the total space given by the divisor $W_0\subset W$. Then, by Lemma~\ref{Interpolating family} we obtain a two-parameter degeneration
\[
\tilde\pi: \tilde X\lra \AA^2
\]
isomorphic to $\pi:\ol U\to \AA^1$ over $\{1\}\times \AA^1$ and to the degeneration $\pi': W\to\AA^1$ defined by $\tilde\P'$ over $\AA^1\times\{1\}$. For the latter degeneration, all log curves in the central fibre $(W_0,\M_{W_0})$ are torically transverse. Note that the log constraints $\rho$ translate into point constraints along with multiplicities along toric divisors on the general fibre of $\pi'$, which is the situation treated in the refinement \cite{GPS} of \cite{NS}. In this refined case, the tropical count differs from the count in \cite{NS}, by dividing the weights on the unbounded edges of $\tilde{\Gamma}$ as shown in the Appendix of \cite{GPS}. Hence, from Theorems $3.4$ and $4.4$ in \cite{GPS}, the log maps on the central fiber of $\pi'$ matching analogous incidence conditions is a finite set with cardinality equal to $\tilde{N}^{\mathrm{trop}}_{\Delta,\lambda}$ we defined in  \eqref{NtropCurve}. 
By deformation invariance of log Gromov-Witten invariants we obtain the same virtual count for log maps on the central fibre $\overline{U}_0^\dagger$ of $\pi$, as a log space over the standard log point. By Theorem~\ref{LogExtension}, this count of log maps coincides with the count of log corals, and so 
\[ \tilde{N}^{\mathrm{trop}}_{\Delta,\lambda}=N^{log}_{\Delta,\lambda,\rho}\,.\]
Finally, by Theorem \ref{Independence Of Constraint} relating tropical corals and tropical curves, we have $\tilde{N}^{\mathrm{trop}}_{\Delta,\lambda}
=N^{\mathrm{trop}}_{\Delta,\lambda}$, and so 
\[ N^{\mathrm{trop}}_{\Delta,\lambda} = N^{log}_{\Delta,\lambda,\rho}\,. \]
%By Theorem~\ref{LogExtension} relating log corals and log maps, we have 
%\[ N^{log}_{\Delta,\lambda,\rho} =N^{\mathrm{trop}}_{\Delta,\lambda} \]
%between counts of log corals in $Y_0$, matching incidences $(\Delta,\lambda,\rho)$ and tropical corals on $\trcone \RR$ matching $(\Delta,\lambda)$. 

Since we are in the situation where the total space $Y$ is toric, it follows analogously as in \cite[Prop 7.3]{NS} that this count is a log Gromov-Witten invariant of the general fiber of $Y$, which is the unfolded Tate curve. 
\end{proof}

%-------------------------------------------------------------------------

\subsection{Lifting Log Corals on the Tate curve to its unfolding}
\label{Taking the quotient}
Recall that $Y_0$ is the central fiber of the degeneration of the unfolded Tate curve, and by taking the $\ZZ$-quotient 
\[\pi:Y_0\xrightarrow{/ \ZZ}
T_0 \times \AA^1\]
we obtain the central fiber of the degeneration of the Tate curve, as explained in \S \ref{The Tate curve and its unfolding}. Given a log coral $f:{C}^{\dagger}\to {Y_0}^{\dagger}$, one naturally obtains a log coral 
\[\overline{f}=\pi \circ f: \overline{C}^\dagger
\longrightarrow (T_0 \times \AA^1)^\dagger\,.\] 
In the following Theorem, we show that given a log coral $\overline{f}:\overline{C}^\dagger
\to (T_0 \times \AA^1)^\dagger$ 
there is a lift of it to a log coral  $f:{C}^{\dagger}\to {Y_0}^{\dagger}$.
\begin{theorem}
\label{lift}
Let $\overline{f}:\overline{C}^\dagger\to (T_0 \times \AA^1)^\dagger$ be a log coral fitting into the following Cartesian diagram
\begin{equation} 
\xymatrix@C=30pt
{
\tilde{C}\times_{T_0 \times \AA^1} Y_0 \ar[r]_{/ \ZZ} \ar[d]^{f}
&\overline{C} \ar[d]^{\overline{f}} \\\
Y_0 \ar[r]_{/ \ZZ} & T_0 \times \AA^1
}
\end{equation}
The fiber product  $\overline{C}\times_{T_0 \times \AA^1} Y_0$ is isomorphic to a disjoint union of copies of $\overline{C}$, that is
\[ \overline{C}\times_{T_0 \times \AA^1} Y_0= \coprod_{\ZZ} \overline{C} \]
and
$\overline{f}=\coprod_{n\in \ZZ} \Phi_n \circ f$ where $\Phi_n:Y_0\to Y_0$ is the $\ZZ$-action on $Y_0$ induced by the $\ZZ$-action on the Mumford fan  defined as in \S \ref{Tate fan}.
\end{theorem}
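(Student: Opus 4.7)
The plan is to exploit that, by the construction in \S\ref{The Tate curve and its unfolding}, the projection $\pi:Y_0\to \tilde T_0$ is a free $\ZZ$-quotient of schemes, and in fact a strict morphism of log schemes since $\M_{\tilde T_0}$ is by construction the $\ZZ$-invariant descent of $\M_{Y_0}$. In particular $\pi$ is a $\ZZ$-Galois étale cover in both the scheme-theoretic and the log-theoretic sense. Once this is observed, proving the decomposition of the fibre product reduces to showing that $\tilde C$ is simply connected, so that pullback of an étale $\ZZ$-cover along $\tilde f$ must split completely.

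For simple-connectedness I would argue from the shape of $\tilde C$: as the domain of a log coral of arithmetic genus zero, its normalization is a disjoint union of $\PP^1$'s and $\AA^1$'s glued along a finite set of nodes whose dual graph is a tree (cf.\ Construction \ref{dual graph}). Since $\PP^1$ and $\AA^1$ are simply connected and the gluing pattern is a tree, every connected étale cover of $\tilde C$ is an isomorphism. Applying this to the base change of $\pi$ along $\tilde f$ gives
$$
\tilde C\times_{\tilde T_0}Y_0\;\simeq\;\coprod_{n\in\ZZ}\tilde C_n,
$$
with each $\tilde C_n$ mapping isomorphically to $\tilde C$ via the first projection. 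The second projection yields morphisms $f_n:\tilde C\to Y_0$ with $\pi\circ f_n=\tilde f$. Fixing $f:=f_0$, the $\ZZ$-action on $Y_0$ by the translations $\Phi_n$ permutes the sheets of $\pi$ transitively on each fibre, and by tracking how this action permutes the components $\tilde C_n$ of the fibre product we obtain $f_n=\Phi_n\circ f$, giving the claimed decomposition $\tilde f=\coprod_{n\in\ZZ}\Phi_n\circ f$.

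The main obstacle, and the step requiring the most care, is the log enhancement of the argument. The step above produces only a scheme-theoretic lift $f$, whereas the statement should be read as an isomorphism of log schemes. This follows from strictness of $\pi$: the adjunction map $\pi^{-1}\M_{\tilde T_0}\to\M_{Y_0}$ is an isomorphism, hence $f^{-1}\M_{Y_0}\simeq f^{-1}\pi^{-1}\M_{\tilde T_0}\simeq\tilde f^{-1}\M_{\tilde T_0}$, so the log morphism $\tilde f^\flat$ factors uniquely through a log morphism $f^\flat:f^{-1}\M_{Y_0}\to\M_{\tilde C}$ making $\tilde f=\pi\circ f$ a factorization in the log category. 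The same strictness of $\pi$ together with the fact that each $\Phi_n$ is a morphism of log schemes (it is induced by a lattice translation on the fan) shows that $\Phi_n\circ f$ again lifts canonically, and that the identification of $\tilde C\times_{\tilde T_0}Y_0$ with $\coprod_n\tilde C_n$ holds on the level of log structures. This finishes the proof modulo the standard verifications that fibre products in the category of fine saturated log schemes commute with scheme-theoretic fibre products under strict morphisms.
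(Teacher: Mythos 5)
Your proposal is correct, and its skeleton (base change along the \'etale $\ZZ$-quotient, genus zero so the dual graph is a tree) matches the paper's, but the decisive step is carried out differently. The paper fixes a connected component $C'$ of $\tilde C\times_{\tilde T_0}Y_0$, shows via a covering-map argument on dual graphs (a covering of a tree is an isomorphism) that $C'$ has finitely many components, hence that $C'\to\tilde C$ is proper and \'etale, and then pins down the degree: Hurwitz gives $\pi^*\omega_{\tilde C}=\omega_{C'}$ and Riemann--Roch for nodal curves gives $2g-2=-2d$, so $d=1$. You instead trivialize the pulled-back $\ZZ$-cover in one stroke by showing $\tilde C$ admits no nontrivial connected covers (components $\PP^1$, $\AA^1$ simply connected, tree gluing), which eliminates the properness discussion and the Hurwitz/Riemann--Roch computation altogether; the one point to keep clean there is that an arbitrary \'etale morphism of infinite degree need not be a covering, so you should (as you implicitly do) use that $Y_0\to\tilde T_0$ is a $\ZZ$-torsor/covering map and that pullbacks of coverings are coverings, so each connected component of the fibre product really is a covering of $\tilde C$ and simple connectedness applies. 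Your approach buys a shorter, purely topological/torsor-theoretic argument and, moreover, makes explicit the log-theoretic content (strictness of the quotient map and the canonical factorization of $\tilde f^\flat$ through $f^\flat$, plus that each $\Phi_n$ is a log automorphism), which the paper's proof treats only on the scheme-theoretic level and leaves implicit; the paper's approach buys an argument that does not invoke the analytic topology or torsor language, staying within proper \'etale morphisms of nodal curves and standard degree computations.
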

\begin{proof}
It is enough to show that each connected component $C'\subset \overline{C}\times_{T_0 \times \AA^1} Y_0$ is isomorphic to $\overline{C}$. This will follow from the fact that any \'etale proper map from a connected curve to a nodal rational curve is an isomorphism. The map $\pi:\overline{C}\times_{T_0 \times \AA^1} Y_0\to \overline{C}$ is \'etale since $Y_0 \to (T_0 \times \AA^1)$ is \'etale, hence it is a local isomorphism in the \'etale topology. 

We claim that $C'$ has finitely many irreducible components. 
%To show this, it is enough to show that $\pi^{-1}(\tilde{C})$ has finitely many irreducible components, since $\tilde{C}$ has finitely many irreducible components by the definition of a log coral. 
Let $\Gamma_{C'}$ and $\Gamma_{\overline{C} }$ be the dual graphs of $C'$ and $\overline{C} $ respectively, defined as in construction \ref{dual graph}. The map $\pi|_{C'}:C'\to \overline{C} $ induces a map
\[ \Gamma_{\pi}:  \Gamma_{C'} \to \Gamma_{\overline{C} } \]
which is a local isomorphism. So, for every vertex
$v\in \Gamma_{C'}$ , the restriction $\Gamma_{\pi}|_{Star(v)}$ is an isomorphism onto $Star(\Gamma_{\pi}(v))$. Here, for any vertex $v$, $Star(v)$ denotes the subgraph consisting of the vertex v, edges containing $v$ and their vertices. Since $\Gamma_{C'}$ is connected and $\Gamma_{\overline{C}}$ is a tree, $\Gamma_{\pi}$ is a covering map. As any covering map factors through the universal covering and $\pi_1(\Gamma_{\overline{C}})=0$, it follows that $\Gamma_{\pi}$ is an isomorphism. Hence, the number of vertices of $\Gamma_{C'}$ is finite and the claim follows. This shows that the map $\pi|_{C'}:C'\to \overline{C} $ is proper.

By the Hurwitz formula we have $\pi^{*}\omega_{\overline{C} }=\omega_{C'}$ where $\omega_{\overline{C} }$ and $\omega_{C'}$ denote the canonical bundles over 
$\overline{C}$ and $C'$ respectively. Applying the Riemann Roch Theorem for nodal curves, we obtain
\[ 2g-2=\deg \omega_{C'}=d\cdot \deg (\omega_{\overline{C} })=-2d   \] 
where $g=g(C')=0$ as each component of $C'$ is a copy of $\PP^1$ and the dual graph is a tree. Hence, $d=1$. Therefore, $\pi|_{C'}:C'\to \overline{C} $ is an isomorphism.
\end{proof}
%It follows that a log coral $\tilde{f}:(\tilde{C},\M_{\tilde{C}})\to ((T_0 \times \AA^1),\M_{(T_0 \times \AA^1)})$ lifts uniquely to a log coral $\tilde{f}|_{C'}:(C',\M_{C'})\to {Y_0}^{\dagger}$, once fixing the connected component $C'\subset \tilde{C}\times_{(T_0 \times \AA^1)}Y_0$. 
%Note that we fix $C'$ by fixing the degeneration order of $\tilde{C}$. Recall that fixing the degeneration order means under tropicalization fixing an asymptotic constraint for the corresponding tropical coral. 
%The idea here is similar to the standard fact in covering theory, that a lift of a path to the covering space is determined uniquely once fixing an initial point. We will explain this in more detail in the rest of this section.
We will now discuss how to translate the equivalence of the tropical and log counts stated in Theorem \ref{main theorem} on the (degenerate) unfolded Tate curve to the $\ZZ$-quotient $T_0 \times \AA^1$. 
%Note that as a scheme $T_0 \times \AA^1$ is the product of the nodal elliptic curve $T_0$ with $\AA^1$. 
Let us first set up the counting problem on the tropical side. Denote the corresponding affine manifold by $\ol B= \trcone S^1$, which topologically is $S^1\times\RR_{\ge0}$. 

Each direction of positive or negative ends (the directions associated to positive edges, or to the edges adjacent to negative vertices of a coral graph respectively) that enter the definition of the degree of a tropical coral is now only defined up to the $\ZZ$-action. Thus specifying a degree $\Delta$ on $\ol B$ amounts to choosing an equivalence class under the $\ZZ$-action of the corresponding data on $B= \trcone \RR$. Given a degree $\Delta$ and asymptotic constraint $\rho$ on $\ol B$, there are either zero or infinitely many tropical corals with a fixed number of ends and with each direction and asymptotic constraint restricted only to an equivalence class under the $\ZZ$-action. Indeed, given any tropical coral fulfilling the given constraints on its ends, the composition with the action of any integer on $B$ will produce another one. These tropical corals related by the $\ZZ$-action induce equal tropical objects 
 on $\ol B$. Thus we should rather restrict to $\ZZ$-equivalence classes of tropical corals. A simple way to break the $\ZZ$-symmetry is to choose one representative of the $\ZZ$-equivalence class of directions of one of the ends.
 %, say the first one.

Another source of infinity in the count is more fundamental and is part of the nature of the problem. It is related to the fact that the counting problem in symplectic cohomology produces an infinite sum with single terms weighted by the symplectic area of the pseudo-holomorphic curve with boundary. The logarithmic analogue runs as follows. For a log coral $f:C^{\dagger}\to Y_0^{\dagger}$ look at the underlying scheme theoretic morphism and take the composition 
\[ C\lra Y_0 \lra   T_0 \times\AA^1\] 
where $T_0$ is the central fiber of the Tate curve as in \S\ref{The Tate curve and its unfolding}.
This morphism extends uniquely to a morphism 
\[ \overline{C} \lra T_0\times\PP^1 \]
from a complete curve $\overline{C}$ to $T_0\times\PP^1$. The algebro-geometric analogue of the symplectic area is the degree of the composition 
\[\overline{C}\lra T_0\times\PP^1 \lra T_0\,.\] 
Since ``degree'' is already taken for something else, let us call the degree of $\tilde{\ul C}\to T_0$ the \emph{log-area} of the log coral. We denote by
\[N_{\Delta,\lambda,\rho}(A)\]
the cardinality of the set of log corals of fixed log-area $A$, given degree $\Delta$, matching the asymptotic constraint $\lambda$ and the log constraint $\rho$. We will see in \eqref{trop_log}
below that $N_{\Delta,\lambda,\rho}(A)$ is finite.

The tropical analogue of the area is given by a tropical intersection number as follows \cite{AR}. For each vertex $(a\cdot b,1)$ of $\P_b$ we have the line \[L_a=\RR\cdot(a,1)\subset N_\RR\] 
through the origin. Each tropical coral has a well-defined intersection number with $L_a$. Define the \emph{tropical area} of a tropical coral as the sum of the intersection numbers of its extension as a tropical curve in $N_\RR$ with $L_a$, for all $a\in \ZZ$. The tropical intersection number bounds the number of crossings of the tropical coral with unbounded $1$-cells of the polyhedral decomposition $\trcone \P_b$ of $\trcone \RR$, defined in \S \ref{The unfolded Tate curve}. Hence, with the $\ZZ$-action modded out as before, we also obtain a finite tropical count \[N^{\mathrm{trop}}_{\Delta,\lambda}(A)\] 
of tropical corals in $\trcone S^1$ of fixed tropical area $A$ of fixed degree $\Delta$ and matching a general asymptotic constraint $\lambda$. 
Using the fact that the tropical area of the tropicalization of a log-coral is the log-area of the log coral,
Theorem \ref{lift} and Theorem \ref{main theorem}, readily gives 
\begin{equation}
\label{trop_log}
N^{\mathrm{trop}}_{\Delta,\lambda}(A)= N^{log}_{\Delta,\lambda,\rho}(A)\end{equation} 
for any $A\in\NN$, $\Delta$, $\lambda$ and $\rho$. This shows in particular that $N^{log}_{\Delta,\lambda,\rho}(A)$ is finite.
Introducing generating series summing over $A\in\NN$:
\[
N^{log}_{\Delta,\lambda,\rho}(T_0 \times \AA^1) =\sum_{A\in\NN} N_{\Delta,\lambda,\rho}(A) q^A \in \CC\lfor q\rfor\,,
\]
\[
N^{\mathrm{trop}}_{\Delta,\lambda}(\trcone S^1) =\sum_{A\in\NN} N^{\mathrm{trop}}_{\Delta,\lambda}(A) q^A \in \CC\lfor q\rfor\,,
\]
and using the analogous unobstructedness results in \cite[\S 7]{NS}, we obtain:
\begin{theorem}
\label{Main Theorem for the Tate curve}
$N^{log}_{\Delta,\lambda,\rho}(T_0 \times \AA^1)=N^{\mathrm{trop}}_{\Delta,\lambda}(\trcone S^1)$
and this count is a log invariant of the Tate curve.
\end{theorem}
 
%We may then define the count of log corals on $T_0 \times \AA^1^\dagger$ as the formal power series
%\begin{equation}\label{Eqn: count on Tate curve}N^{log}_{\Delta,\lambda,\rho}(T_0 \times \AA^1) =\sum_{A\in\NN} N_{\Delta,\lambda,\rho}(A) q^A \in \CC\lfor q\rfor.\end{equation}

%We then have the following tropical analogue of Equation \eqref{Eqn: count on Tate curve}
%\[N^{\mathrm{trop}}_{\Delta,\lambda}(\trcone S^1) =\sum_{A\in\NN} N^{\mathrm{trop}}_{\Delta,\lambda}(A) q^A \in \CC\lfor q\rfor.\]
%where $N^{\mathrm{trop}}_{\Delta,\lambda}(\trcone S^1)$ denotes the count of tropical corals in $\trcone S^1$ with degree $\Delta$, matching the asymptotic constraint $\lambda$.

%===========================================================
%===========================================================

\section{Punctured Log Curves}
\label{The punctured invariants of the central fiber of the Tate curve}
The theory of log Gromov--Witten invariants \cite{logGW,logGWbyAC}, provides a vast generalisation of the theory of relative Gromov--Witten invariants \cite{LR, JL}. While in relative Gromov--Witten theory constraints are imposed relative to a smooth divisor, in log Gromov--Witten theory this is done relative to more general divisors, including normal--crossing divisors. Abramovich--Chen--Gross--Siebert, introduced \emph{punctured log Gromov--Witten theory}, in which one could also consider allow negative tangency orders with the divisors. 

Recall that we introduced log corals on $Y_0$, the central fiber of the degeneration of the (unfolded) Tate curve. We show in this section that log corals correspond to punctured log maps on $\widetilde{T}_0$, the central fiber of the (unfolded) Tate curve, defined as in \S \ref{The Tate curve and its unfolding}. As discussed in \S \ref{Taking the quotient}, since log corals on the Tate curve lift to the unfolded Tate curve, for convenience we again work on the unfolded case. 
\begin{remark}
\label{Rem: punctured curves}
From Proposition \ref{Prop:contact order} it follows that given a log coral 
$f:C^\dagger \to Y_0^\dagger$, the tangency order (or contact order) at a marked point $p\in C$ is given as the image of the composition of the following maps
\[ \overline{\M}_{Y_0,f(p)}\to \overline{\M}_{C,p}= \NN\oplus \NN \xrightarrow{\pr_2} \NN \,,\]
where the equality $\overline{\M}_{C,p}= \NN\oplus \NN$ follows from 
Theorem
\ref{Thm: structure of log curves}(ii), and 
$\pr_2$ is the projection on the second factor.
To achieve negative tangency orders, the idea in punctured log Gromov--Witten theory is to modify the log structure on the domain, so that we would have $\overline{\M}_{C,p} \subseteq \NN\oplus \ZZ$, and hence projecting to the second factor would give elements in $\ZZ$ capturing the tangency order, which could be negative -- for details see \cite{ACGS}. Such marked points $p$, where one allows negative contact orders are referred to as punctured points. For the case of the Tate curve recall that the domain of a log coral $f:C^\dagger \to Y_0^\dagger$ onto the central fiber of a degeneration of the Tate curve admits non-complete components. We will obtain punctured log maps from log corals by trading the non-complete components by punctured points. 
\end{remark}
Let $f:{C}^{\dagger}\to {Y_0}^{\dagger}$ be a log coral and let $\AA^1_k\subset C$ for $k=1,\ldots,m$ be the non-complete components of $C$ such that 
\begin{equation}
\label{eq:noncomplete}
C=\coprod_{k=1}^m \AA^1_k ~ \amalg_{q_k} ~ \tilde{C}
\end{equation}
where $q_k \subset \AA^1_k$ is a nodal intersection point of the non-complete component with another irreducible component. We omit all the non-complete components from $C$, and define a punctured log map
\[ \tilde{f}: (\tilde{C},\M^{\circ}_{\tilde{C}})\to (\widetilde{T}_0,\M_{\widetilde{T}_0})  \]
where the log structure $\alpha^{\circ}:\M^{\circ}_{\tilde{C}}\to \mathcal{O}_{\tilde{C}}$ is 
obtained by modifying the log structure on $C$ as pointed in Remark \ref{Rem: punctured curves}. For a detailed of this modification we refer to \cite[\S $2.1.4$]{ACGS}. 

Recall from \S \ref{The Tate curve and its unfolding}, that $Y_0$ is obtained as the product of the affine line with $\widetilde{T}_0$. We will first show that, given a log coral $f: C^{\dagger} \rightarrow {Y_0}^{\dagger}$, by projecting the image of non-complete component $\AA^1\subset Y_0$ onto $\widetilde{T}_0$, we obtain a single point in $\widetilde{T}_0$ -- this will correspond to a punctured point.
\begin{proposition}
\label{constant along noncomplete}
Let $f: C^{\dagger} \rightarrow {Y_0}^{\dagger}$ be a general log coral and let $C'\subset C$ be a non-complete component. Then, the map 
\begin{equation}
\label{Eq: position of puncture}
\pr_2\circ \tilde{f}_{|C'}:C'\cong \mathbb{A}^1  \rightarrow \mathbb{P}^1 
\end{equation} 
defined as in Equation \ref{factoring}, is constant.
\end{proposition}
\begin{proof}
Let $\tilde{f}(z):\PP^1\to\PP^1$ be the extension of $\pr_2\circ \tilde{f}_{|C'}:C'\cong \mathbb{A}^1  \rightarrow \mathbb{P}^1 $ as in \ref{factoring}. If $f$ is non-transverse at $C'$, the result follows trivially. Assume $f$ is transverse at $C'$, then 
\[\big(\tilde{f}_{|C'}(C')~\cap ~(Y_0)_{s=0}\big)\subset \PP^1\setminus \{ 0,\infty \}\]
where by $\{ 0,\infty \}$ we denote the lower dimensional toric strata in $\PP^1$.
Furthermore, by the parallel condition in Definition \ref{parallel}(i), the point at infinity intersects the divisor at infinity at a point 
\[\tilde{f}_{C'}(\infty_{C'})=q_{\infty} \in \CC^{\times}\subset \PP^1\] 
Hence we have $\tilde{f}(C' \cup \{\infty_{C'} \} ) \in \PP^1\setminus \{ 0\}$. The result follows, since a morphism from a complete variety to an affine variety is constant. 
%Let $f_x$ and $f_x^{\beta}/f_y^{\alpha}$ be defined as in \ref{parallel}. Then we define $\tilde{f}:\PP^1\to\PP^1$ as
%\[ \tilde{f}(z):= \begin{cases} 
 %     f_x(z) & \mathrm{if} ~f~ \mathrm{is~ transverse ~at}~C'\\
  %    f_x^{\beta}/f_y^{\alpha}(z) & \mathrm{if} ~f~ \mathrm{is~ non~transverse~ at}~C'
   %\end{cases}
%\]
%where $z=0$.
\end{proof}
Proposition \ref{constant along noncomplete} shows that the non-complete components $C'\subset C$ do not carry any additional information except at the position of the nodal point $q_{\infty}$. By trading any non-complete component $C'$ by the point $\pr_2\circ \tilde{f}_{|C'}(C')$ defined as in \eqref{Eq: position of puncture}, we obtain a restriction of a log coral in $Y_0$ to $\widetilde{T}_0={(Y_0)}_{s=0}$. On the tropical side this amounts to restricting the corresponding tropical coral on the truncated cone $\trcone \RR$, obtained by tropicalization of a log coral, to the interior of the truncated cone. 

We define incidence conditions for punctured maps $\tilde{f}:(\tilde{C},\M^{\circ}_{\tilde{C}})\to (\widetilde{T}_0,\M_{\widetilde{T}_0})$ analogously to the case of extended log corals, as explained in \S \ref{technical part}. 

%We let $\Delta^{\circ}:=\tilde{\Delta}$, and $\lambda^{\circ}:=\tilde{\lambda}$, where $\tilde{\Delta}$ is the log degree and $\tilde{\lambda}$ is the degeneration order. Recall also the definition of a log constraint $\tilde{\rho}:=(\overline{\tilde{\rho}},\underline{\tilde{\rho}})$, as in \S \ref{technical part}. We define ${\rho}^{\circ}:=(\overline{{\rho}^{\circ}},\underline{{\rho}^{\circ}})$ analogously as follows. Let $\overline{{\rho}^{\circ}}=\overline{\tilde{\rho}}$, and let $\underline{{\rho}^{\circ}}$ be the marked point position of $p_k\in\PP^1$ which is identical with the virtual position of the point at infinity of $C'_k$ by Proposition \ref{constant along noncomplete}. 

\begin{definition}
Let $\Delta$ be a degree with $\ell+1$ positive and $m$ negative entries, $\lambda$ an asymptotic constraint for $\Delta$, 
and $\rho$ a log constraint of order $(\ell,m)$.
We say that a punctured log map $\tilde{f}:(\tilde{C},\M^{\circ}_{\tilde{C}})\to \widetilde{T}_0^\dagger$ with $\ell+1$ marked points $p_0,\dots,p_\ell$ and $m$ punctured points 
$q_1,\dots,q_m$
\emph{matches} $(\Delta,\lambda,\rho)$
if:
\begin{itemize}
     \item[(i)] For each marked point $p_i$, the composition 
\[\mathrm{pr}_2\circ \overline{\tilde{f}}^{\flat}_{p_i}:\overline{\M}_{{\widetilde{T}_0},\tilde{f}(p_i)}^{gp} \rightarrow \overline{\M}_{\tilde{C},p_i}^{\circ,gp}\rightarrow \ZZ\]
equals $\overline{\Delta}^i$,
where $\mathrm{pr}_2$ is the projection map to the second factor.
\item[(ii)] For each punctured point $q_j$, the composition 
\[\mathrm{pr}_2\circ \overline{\tilde{f}}^{\flat}_{q_j}:\overline{\M}_{{\widetilde{T}_0},\tilde{f}(q_j)}^{gp} \rightarrow \overline{\M}_{\tilde{C},q_j}^{\circ,gp}\rightarrow \ZZ\]
equals $\underline{\Delta}^j$,
where $\mathrm{pr}_2$ is the projection map to the second factor.
\item[(iii)]For each marked point $p_i$
with $1 \leq i\leq \ell$, 
let $C_i$ be the irreducible component
of $\tilde{C}$ containing $p_i$, of generic point $\eta_i$. Then, the image of 
\[(\bar{\tilde{f}}_{\eta_i}:\overline{\M}_{\widetilde{T}_0,\tilde{f}(\eta_i)}\rightarrow \overline{\M}^{\circ}_{\tilde{C},\eta_i}\cong \NN) \in N \]
under the quotient map 
\[ N\rightarrow N / (N \cap \RR \overline{\Delta}^i)\] 
is equal to $\lambda_i$.
\item[(iv)]For each punctured point $q_j$, 
let $C_j$ be the irreducible component
of $\tilde{C}$ containing $q_j$, of generic point $\eta_j$. Then, the image of 
\[(\bar{\tilde{f}}_{\eta_j}:\overline{\M}_{\widetilde{T}_0,\tilde{f}(\eta_j)}\rightarrow \overline{\M}^{\circ}_{\tilde{C},\eta_j}\cong \NN) \in N \]
under the quotient map 
\[ N\rightarrow N / (N \cap \RR \underline{\Delta}^j)\] 
is equal to $0$.
\item[(v)]
for $1\leq i\leq \ell$,  $\tilde{f}^\flat_{p_i}(s_{u,\lambda})(p_i)=\overline{\rho_i}$, 
where $s_{u,\lambda}$ is defined as in Discussion \ref{DiscussionOfLogConstraints}.
\item[(vi)] for $1\leq j\leq m$, $\tilde{f}^\flat_{q_j}(s_{\underline{\Delta}_j,v_j^-})(q_j)=\underline{\rho}_j$, where $s_{\underline{\Delta}_j,v_j^-}$ is defined as in Discussion \ref{DiscussionOfLogConstraints}.
 \end{itemize}
\end{definition}

Denote the set of punctured log maps $\tilde{f}:(\tilde{C},\M^{\circ}_{\tilde{C}})\to \widetilde{T}_0^\dagger$ matching $(\Delta,\lambda,\rho)$
by ${\mathfrak{L}^{\circ}}_{\Delta,\lambda,\rho}$. It follows from Proposition \ref{constant along noncomplete} that given a punctured map $\tilde{f}:(\tilde{C},\M^{\circ}_{\tilde{C}})\to \widetilde{T}_0^\dagger$, we can extend it to a log coral $f:{C}^{\dagger}\to {Y_0}^{\dagger}$, defined by
\begin{equation}
\label{Eq: f}
  f =
\left\{
	\begin{array}{ll}
		\tilde{f}  & \mbox{on } \tilde{C}\subset C \\
		\tilde{f}(q_k) & \mbox{on } \AA^1_k \subset C
	\end{array}
\right. 
\end{equation}
for each $k\in\{ 1,\ldots,m\}$, where $q_k$ and $\AA^1_k$ are as in \eqref{eq:noncomplete}. The map $f$ naturally lifts to a log morphism, similarly as in the proof of Theorem $3.1.3$ of \cite{ACGS}. Hence, we obtain the following result.
\begin{theorem}
\label{PuncturedCorrespondence}
The forgetful map
\begin{eqnarray}
\mathfrak{L}_{\Delta,\lambda,\rho} &\lra
\mathfrak{L}^{\circ}_{\Delta,\lambda,\rho}
\nonumber\\
f &\mapsto \tilde{f}
\nonumber
\end{eqnarray}
is a bijection. Thus, the count of log corals $N^{log}_{\Delta,\lambda,\rho}$ in Theorem \ref{main theorem} is a punctured log Gromov--Witten invariant. 
\end{theorem}
The deformation theory with punctured point constraints has been worked out in a preliminary version of \cite{ACGS}, and in this particular case for the Tate curve it follows that the punctured stable maps in $\mathfrak{L}^{\circ}_{\Delta,\lambda,\rho}$ are unobstructed -- for details on this, we refer to \cite[\S $4$]{ACGS}. Therefore, from the bijection in Lemma \ref{PuncturedCorrespondence} we conclude  the equality of the count of log corals on the central fiber of the degeneration of the (unfolded) Tate curve with a punctured Gromov-Witten invariant in the sense of \cite{ACGS} on the central fibre of the (unfolded) Tate curve.
\bibliographystyle{plain}
\bibliography{bibliography}
%-------------------------------------------------------------------------------

\end{document}